 \def\bibsep{\smallskipamount}%
\definecolor{clemson-orange}{RGB}{234,106,32}
\definecolor{chicago-maroon}{RGB}{128,0,0}
\definecolor{northwestern-purple}{RGB}{82,0,99}
\definecolor{lawngreen}{RGB}{0,250,154}
\definecolor{pink}{RGB}{255,0,128}
\definecolor{northwestern-purple}{RGB}{82,0,99}
\definecolor{OliveGreen}{RGB}{	186, 184, 108}
\crefname{assumption}{Assumption}{Assumptions}
\crefname{lemma}{Lemma}{Lemmas}
\crefname{theorem}{Theorem}{Theorems}
\crefname{corollary}{Corollary}{Corollaries}
\crefname{prop}{Proposition}{Propositions}
\crefname{claim}{Claim}{Claims}
\crefname{procedure}{Procedure}{Procedures}
\crefname{algorithm}{Algorithm}{Algorithms}
\crefname{figure}{Figure}{Figures}
\crefname{remark}{Remark}{Remarks}
\crefname{section}{Section}{Sections}
\crefname{procedure}{Procedure}{Procedures}
\crefname{proposition}{Proposition}{Propositions}
\crefname{example}{Example}{Examples}
\crefname{equation}{}{}
\crefname{enumi}{}{}
\crefname{table}{Table}{Tables}
\crefname{definition}{Definition}{Definitions}
\crefname{appendix}{Appendix}{Appendices}
\newcommand{\zerodisplayskips}{%
  \setlength{\abovedisplayskip}{5pt}%
  \setlength{\belowdisplayskip}{5pt}%
  \setlength{\abovedisplayshortskip}{5pt}%
  \setlength{\belowdisplayshortskip}{5pt}}
\appto{\normalsize}{\zerodisplayskips}
\appto{\small}{\zerodisplayskips}
\appto{\footnotesize}{\zerodisplayskips}
\newcommand*{\red}{\textcolor{black}}
\begin{document}


\RUNAUTHOR{El Housni et al.}

\RUNTITLE{Matching Drivers to Riders}

\TITLE{Matching Drivers to Riders: A Two-stage Robust Approach}

\ARTICLEAUTHORS{%
 \AUTHOR{Omar El Housni}
 \AFF{School of Operations Research and Information Engineering, Cornell Tech, New York, NY 10044, \EMAIL{oe46@cornell.edu}}
 \AUTHOR{Vineet Goyal}
 \AFF{Industrial Engineering and Operations Research, Columbia University, New York, NY 10027, \EMAIL{vg2277@columbia.edu}}
 \AUTHOR{Oussama Hanguir}
 \AFF{Industrial Engineering and Operations Research, Columbia University, New York, NY 10027, \EMAIL{oh2204@columbia.edu}}
 \AUTHOR{Clifford Stein}
 \AFF{Industrial Engineering and Operations Research, Columbia University, New York, NY 10027, \EMAIL{cs2035@columbia.edu}}
}
\ABSTRACT{%
Matching demand (riders) to supply (drivers) efficiently is a fundamental problem for ride-sharing platforms who need to match the riders (almost) as soon as the request arrives with only partial knowledge about future ride requests. A myopic approach that computes an optimal matching for current requests ignoring future uncertainty can be highly sub-optimal. In this paper, we consider a two-stage robust optimization framework for this matching problem where future demand uncertainty is modeled using a set of demand scenarios (specified explicitly or implicitly). The goal is to match the current request to drivers (in the first stage) so that the cost of first-stage matching and the worst-case cost over all scenarios for the second-stage matching is minimized. We show that the two-stage robust matching is NP-hard under various cost functions and present constant approximation algorithms for different settings of our two-stage problem. Furthermore, we test our algorithms on real-life taxi data from the city of Shenzhen and show that they substantially improve upon myopic solutions and reduce  the maximum wait time of the second-stage riders.
 
}%


\KEYWORDS{matching, robust optimization, approximation algorithms, ridesharing}

\maketitle

%


\section{Introduction}\label{sec:intro}
Matching demand (riders) with supply (drivers) is a fundamental problem for ride-hailing platforms such as Uber, Lyft and DiDi, who continually need to match drivers to current riders efficiently with only  partial knowledge of future ride requests.   
A common approach in practice is batched matching: instead of matching each request sequentially as it arrives, aggregate the requests for a small amount of time (typically one to two minutes) and match all the requests to available drivers in one batch \citep{Uber,Lyft,zhang2017taxi}.  However, computing this batch matching myopically without considering future requests can lead to a highly sub-optimal outcome for some subsequent riders. 
Motivated by this shortcoming, and by the possibility of using historical data to hedge against future uncertainty, we study a two-stage framework for the matching problem where the future demand uncertainty is modeled as a set of scenarios that are specified explicitly or implicitly. The goal is to compute a matching between the available drivers and current batch of riders such that the total worst-case cost of first-stage and second-stage matching is minimized. More specifically, we consider an adversarial model of uncertainty where the adversary observes the first-stage matching of our algorithms and presents a worst-case scenario from the list of specified scenarios in the second stage. 
We primarily focus on the case where the first-stage cost is the average weight of the first-stage matching, and the second-stage cost is the highest edge weight in the second-stage matching. This is motivated by the goal of computing a low-cost first-stage matching while also minimizing the waiting time for any ride in the worst-case scenario in the second stage. We also consider other metrics for the total cost and present related results.

Two-stage robust optimization is a popular model for hedging against uncertainty \citep{gupta2010thresholded,el2017beyond,el2024lp}. Several combinatorial optimization problems have been studied in this model, including Set Cover and Capacity Planning \citep{dhamdhere2005pay,feige2007robust}, Facility Location \citep{baron2011facility,el2021power} and  Network flow \citep{atamturk2007two}.
Two-stage matching problems with uncertainty, however, have not been studied extensively. They have been considered in the stochastic setting with uncertainty over the edges \citep{katriel2008commitment,escoffier2010two}, or recently in  adversarial and stochastic settings for maximizing supply efficiency or maximizing market efficiency in a two-stage matching \citep{feng2023two}.
\cite{matuschke2018maintaining} considered a two-stage version of the uni-chromatic problem (where there is no distinction between servers and clients). Their model can be seen as online min-cost matching with recourse while our model focuses on the worst-case performance with respect to an uncertainty set. 
\red{A more detailed discussion on  related work is presented in Section \ref{sec:relatedwork}.}

\red{In this paper, we contribute to the field of two-stage matching with a focus on a robust optimization approach, specifically addressing situations where an adversary is limited to selecting from a set of predefined scenarios for the second stage. Our work explores the challenges and solutions in scenarios constrained by robust optimization principles.} We study the hardness of approximation of our two-stage problem under different cost functions and present constant approximation algorithms in several settings for both the implicit and explicit models of uncertainty. Furthermore, we test our algorithms on real-life taxi data from the city of Shenzhen and show that they significantly improve upon classical greedy solutions.

\subsection{Results and Contributions}
\textbf{Problem definition.} We consider the following \textit{Two-stage Robust Matching Problem}. We are given a set of drivers $D$, a set of first-stage riders $R_1$, a universe of potential second-stage riders $R_2$ and a set of second-stage scenarios ${\mathcal S} \subseteq {P}(R_2)$\footnote{$\mathcal{P}(R_2)$ is the power set of $R_2$, the set of all subsets of $R_2$.}. We are  given a metric distance $d$ on $V = R_1 \cup R_2 \cup D$. The goal is to find a subset of drivers $D_1 \subseteq D$ ($|D_1| = |R_1|$) to match all the first-stage riders $R_1$ such that the sum of cost of first-stage matching and worst-case cost of second-stage matching (between $D\setminus D_1$ and the riders in the second-stage scenario) is minimized. More specifically, 
 \vspace{1mm}
\[ \min\limits_{D_1 \subset D}\Big\{ cost_1(D_1,R_1) + \max\limits_{S \in \mathcal{S}} cost_2(D\setminus D_1, S)\Big\}.\]

The first-stage decision is denoted $D_1$ and its cost is $cost_1(D_1,R_1)$. Similarly, $cost_2(D\setminus D_1, S)$ is the second-stage cost for scenario $S$, and $\max \{cost_2(D\setminus D_1, S) \; | \; S \in \mathcal{S}\}$ is the worst-case cost over all possible scenarios. Let $|R_1|= m$, $|R_2| = n$. We denote the objective function for a feasible solution $D_1$ by 
\[f(D_1) =  cost_1(D_1,R_1) + \max\limits_{S \in \mathcal{S}} cost_2(D\setminus D_1, S).\]

We assume that there are sufficiently many drivers to satisfy both first and second-stage demand. Given an optimal first-stage solution $D_1^*$, we denote 
\[OPT_1 = cost_1(D_1^*,R_1), \quad OPT_2 = \max \{cost_2(D\setminus D_1^*, S)\; | \; S \in \mathcal{S}\}, \quad OPT = OPT_1 + OPT_2.\]
 As we mention earlier, we primarily focus on the setting where the first-stage cost is the average weight of matching between $D_1$ and $R_1$, and the second-stage cost is the bottleneck matching cost between $D\setminus D_1$ and $S$.\footnote{The bottleneck matching problem is to find a maximum matching that minimizes the length of the longest edge.} We refer to this variant as the \textit{Two-Stage Robust Matching Bottleneck Problem} (\textbf{TSRMB}). We also consider several other cost variants and present results in Section~\ref{sec:variants}. Formally, let $M_1$ be the minimum weight perfect matching between $R_1$ and $D_1$, and given a scenario $S$, let $M_2^S$ be the bottleneck matching between the scenario $S$ and the available drivers $D\setminus D_1$, then the cost functions for the TSRMB are:
\vspace{1mm}
\[
    cost_1(D_1,R_1)  =  \frac{1}{m} \sum\limits_{(i,j) \in M_1} d(i,j), \quad \mbox{ and } \quad
    cost_2(D\setminus D_1,S)  =  \max\limits_{(i,j) \in M_2^S} d(i,j).
\]

\begin{figure}
    \centering
    \includegraphics[scale=0.4]{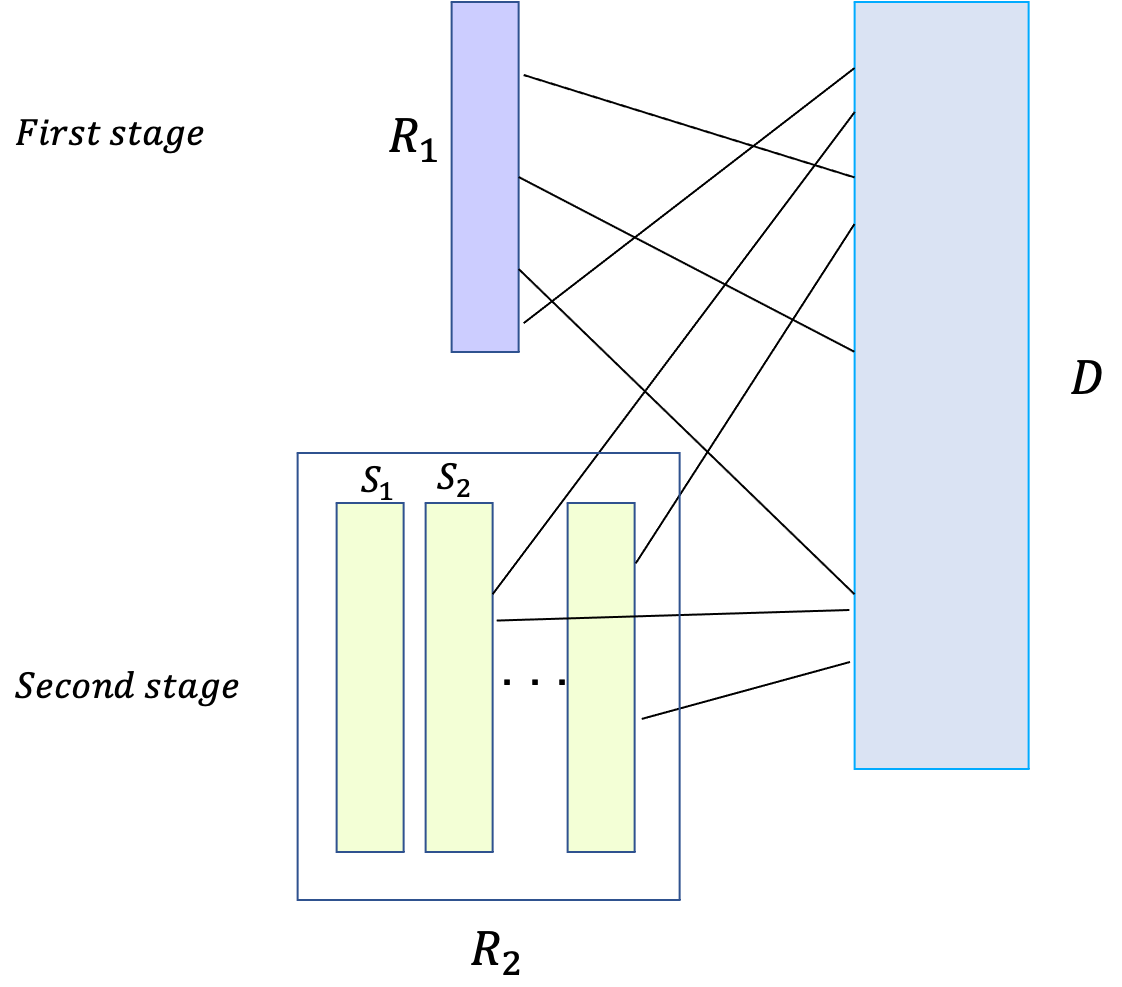}
    \caption{Bipartite graph of drivers and riders in our two-stage matching problem.}
    \label{fig:twostagemodel}
\end{figure}

 The difference between the first and second-stage metric is motivated by the fact that the \red{platform} has access to the current requests and can exactly compute the cost of matching these first-stage requests. On the other hand, to ensure the robustness of the solution over the second-stage uncertainty, we require for all second-stage assignments to have low waiting times by accounting for the maximum wait time in every scenario. Note that we choose the first-stage cost to be the average matching weight instead of the total weight for homogeneity reasons, so that first and second-stage costs have comparable magnitudes.

\vspace{2mm}
\noindent{\bf Scenario model of uncertainty}. Two common approaches to model uncertainty in robust optimization problems are to either explicitly enumerate all the realizations of the uncertain parameters or to specify them implicitly by a set of constraints over the uncertain parameters. In this paper, we consider both models.  In the \textit{explicit model}, the uncertainty set is given by a 
list of scenarios: $\mathcal{S} = \{S_1, \ldots, S_p\}$.  
In the \textit{implicit model}, we consider a universe of second-stage riders, denoted as $R_2$. The uncertainty set in this model is defined by a set of constraints that must be satisfied by any scenario. Specifically, scenarios in the implicit model correspond to the extreme points of the polytope that represents the uncertainty set. A common model of uncertainty in this context is the budget of uncertainty set, where we are given a universe of second-stage riders, $R_2$, and any subset of size at most $k$ can be considered a scenario. Thus, the set of scenarios is defined as:
\begin{equation} \label{eq:budget}
    \mathcal{S} = \{ S \subseteq R_2 \mid |S| \leq k \},
\end{equation}
where $k$ is a given parameter. Note that the total number of possible scenarios is exponential in $k$, but they are specified implicitly. This model is widely used in Robust Optimization literature, commonly referred to as the budget of uncertainty or cardinality constraint set \citep{bertsimas2004price,housni2018optimality}. The parameter \(k\), known as the budget parameter, provides the flexibility to adjust the level of conservatism in terms of the total demand. In particular, $k$ can be interpreted as the total aggregated demand in the worst case. In an adversarial model,  any \(k\) riders from the given universe might materialize. This uncertainty set also captures elementary models in robust optimization, such as box uncertainty (\(k=|R_2|\)), where the worst-case scenario encompasses the entire universe, and simplex uncertainty (\(k=1\)), where scenarios are formed by individual vertices. {\color{black} While the implicit model can, in principle, accommodate more general uncertainty sets, the current paper focuses primarily on the budgeted uncertainty model. In particular, 
Section~\ref{sec:explicit} focuses on the explicit model of uncertainty. 
Section~\ref{sec:implicit} presents one result  that applies to general uncertainty sets. Then, the rest of the paper (Section~\ref{sec:general-surplus}) focuses on the budget of uncertainty.}

\vspace{2mm}
\noindent\textbf{Hardness.} 
We show that TSRMB is NP-hard even for two scenarios. {\color{black} Furthermore, for three scenarios, TSRMB is NP-hard to approximate within a factor better than $3-\epsilon$ for any $\epsilon > 0$, unless NP=P} (see Theorem \ref{thm:hardness_robust}). For the case of implicit model of uncertainty, we show in Theorem~\ref{thm:hardness2} that under the budget of uncertainty set \eqref{eq:budget}, TSRMB is NP-hard to approximate within a factor better than $3 - \epsilon$ for any $\epsilon > 0$, unless NP=P. In general, the number of scenarios in a budget of uncertainty set can be exponentially large, but  even when the number of scenarios is small, specifically  for $k=1$, {\color{black} we show that TSRMB is NP-hard to approximate within a factor better than $3-\epsilon$ for any $\epsilon > 0$, unless NP=P}  (see Theorem \ref{thm:hardness3}). Given these  hardness results, we focus on designing approximation algorithms for the TSRMB problem.

A natural candidate to address two-stage problems is the greedy approach that minimizes only the first-stage cost without considering the uncertainty in the second stage. However, we show that this myopic approach can be bad, namely $\Omega(m) \cdot OPT$.

\vspace{2mm}
\noindent
\textbf{Approximation algorithms.}   We first consider the case of a small number of explicit scenarios. This model is motivated by the desire to use historical data from past riders as our list of explicit scenarios. Our main result in this case  is a constant approximation algorithm for TSRMB with two scenarios (Theorem \ref{2scenarios}). We  further generalize the ideas of this algorithm to show a constant approximation for TSRMB with a fixed number of scenarios (Theorem \ref{thm:pscenarios}). Our approximation does not depend on the number of first-stage riders or the size of scenarios but scales with  the number of scenarios. \red{In particular, in Theorem \ref{2scenarios}, we show  an algorithm that yields a 5-approximation to the TSRMB problem with 2 scenarios. In Theorem \ref{thm:pscenarios}, we show an algorithm that yields a $O(p^{1.59})$-approximation to the TSRMB with $p$ explicit scenarios. }



The main idea in our algorithms is to reduce the TSRMB problem with multiple scenarios to an instance  with a single \textit{representative scenario} while losing only a small factor. We then solve the single scenario instance (which can be done exactly in polynomial time) and recover a constant-factor approximation for our original problem. The challenge in constructing a single representative scenario is to find the right trade-off between  effectively capturing the demand of all second-stage riders and keeping the cost of this scenario close to the optimal cost of the original instance.

For the implicit model of uncertainty, even though the problem can be described with a polynomial size input, the  scenarios can be  exponentially many, which makes even the evaluation of the total cost of a feasible solution challenging and not necessarily achievable in polynomial time. \red{In fact, our proof of Theorem \ref{thm:hardness2}, establishes that computing the objective function for a given first-stage solution is NP-hard and can not be approximated within a factor better than $3-\epsilon$ for any $\epsilon>0$ unless P=NP.} Our analysis for the implicit model of uncertainty depends on the imbalance between supply and demand. In fact, when the number of drivers is very large compared to riders, the problem is less interesting in practice. However, the problem becomes interesting when the supply and demand are comparable. In this case, drivers might need to be shared between different scenarios. This leads us to define the notion of surplus $\ell = |D| - |R_1| - k$, \red{where $k$ is the maximum size of a second-stage scenario. The surplus represents the maximum number of drivers that we can afford not to use in a solution.}

We first consider the case where the surplus is zero in Section \ref{sec:implicit}. Our result in this section holds for any model of uncertainty, either implicit or explicit. In Theorem \ref{thm:implicitnosuplusproof}, we show that if the surplus is equal to zero, (in this case all the drivers need to be used),  using a scenario with the maximum size as a representative scenario and solving the singe scenario instance gives a 3-approximation to TSRMB. 
{\color{black} Additionally, we establish that this approximation is tight, i.e., even with zero surplus, it is NP-hard to approximate TSRMB within a factor better than $3-\epsilon$ for any $\epsilon > 0$, unless NP=P. }

The problem becomes significantly more challenging even with a small surplus. In Section \ref{sec:general-surplus}, we consider the implicit model of uncertainty described by a budget of uncertainty set. We show that under a reasonable assumption on the size of scenarios,  there is a constant approximation to the TSRMB in the regime when the surplus $\ell$ is smaller than $k$ (the maximum size of a scenario). \red{Specifically, in Theorem \ref{thm:exponential}, we show   an algorithm that yields a $17$-approximation to the TSRMB problem under a budget of uncertainty, when $\ell < k$ and $k \leq \sqrt{\frac{n}{2}}$.
This result is quite involved and requires several different new ideas and techniques to overcome the exponential number of scenarios.}
The algorithm in  Theorem \ref{thm:exponential}  finds a clustering of drivers and riders that yields a simplified instance of TSRMB which can be solved within a constant factor. We show that we can cluster the riders into a ball (riders close to each others) and a set of \textit{outliers} (riders far from each others) and apply some of our ideas from the analysis of two scenario on these two sets. Finally, since the  evaluation problem is challenging because of the  exponentially many scenarios, our algorithm constructs a set of a polynomial number of proxy scenarios on which we can evaluate any feasible solution within a constant approximation.

\color{black}

We also address the case of arbitrary surplus where each scenario has only a single rider (i.e., the uncertainty set is a budget of uncertainty with $k=1$). While this case has only polynomially many scenarios, each of size 1, it is still NP-hard to approximate within a factor better than $3-\epsilon$, as shown in Theorem~\ref{thm:hardness3}. We present an algorithm that provides a $3$-approximation for this case (Theorem~\ref{thm:koneimplicit}). Therefore, our result matches the hardness of approximation and closes the gap for the case of a budget of uncertainty with $k=1$.

\color{black}

\red{We summarize our approximation guarantees and hardness results in Table~\ref{table:results}. The first column of the table specifies the uncertainty model. The second column presents the guarantees of our approximation algorithms. The third column provides our hardness lower bounds. Note that for some models of uncertainty, the approximation guarantee is not tight, leaving open questions for further research to close the gap.}

\begin{table}[h]
\centering
\begin{tabular}{|c|c|c|}
\hline
\textbf{Model of Uncertainty} & \textbf{Approximation Guarantee} & \textbf{Lower Bound} \\ \hline
1 scenario & Solvable in polynomial time & -- \\ \hline
2 scenarios & 5 & NP-hard \\ \hline
$p$ scenarios & $O(p^{1.59})$ & \color{black} $3-\epsilon$ \\ \hline
General model without surplus & 3 & $3-\epsilon$ \\ \hline
Budget of uncertainty with small surplus   & 17 & $3-\epsilon$ \\ \hline
\color{black}Budget of uncertainty with \( k = 1 \) & \color{black} 3 & \color{black} $3-\epsilon$\\ \hline
\end{tabular}
\caption{Summary of our approximation guarantees and hardness lower bounds for TSRMB. }
\label{table:results}
\end{table}

\noindent
\textbf{Experimental study.}
{\color{black}
We implement our algorithms and test them on real-life taxi data from the city of Shenzhen \citep{cheng2019stl}. Our experimental results show that our two-scenario algorithm significantly improves upon the greedy algorithm in both in-sample and out-of-sample settings. We also benchmark against the optimal solution of TSRMB computed using an integer program and against the offline optimal out-of sample solution and show that our algorithm performs competitively.
Moreover, the algorithm achieves a lower second-stage bottleneck cost without increasing the overall matching weight, indicating reduced maximum wait times without sacrificing average performance. 
We further evaluate the algorithm over multiple stages to mimic a rolling-horizon deployment and find that it consistently outperforms the greedy baseline by improving the maximum wait time as well as modest gains in average wait time.  See Section~\ref{sec:experiments} for more details.}

\vspace{3mm}
\noindent
\textbf{Extensions and variants.} While the majority of the paper studies the TSRBM problem, we also initiate the study of several other cost functions
 for two-stage matching problems both for adversarial and stochastic second-stage scenarios.  In particular, we consider the Two-Stage Stochastic Matching Bottleneck (TSSMB), where the first-stage cost is the average weight of the matching, and the second-stage cost is the expectation of the bottleneck matching cost over all scenarios.  We also consider the Two-Stage Robust Matching problem (TSRM), where the first and second-stage costs correspond both to the total weight of the matchings.  Finally, we consider the Two-Stage Robust Bottleneck Bottleneck problem (TSRBB), where the first and second-stage costs both correspond to the bottleneck matching cost. We study the hardness of these variants, and make a first attempt to present approximation algorithms under specific settings.


\vspace{2mm}
{\color{black}
\noindent
\textbf{Other adversarial models.} 
The TSRMB model studied in this paper assumes an \emph{adaptive} adversary, meaning the adversary observes the realization of the algorithm’s first-stage decision \( D_1 \), then selects a second-stage scenario \( S \in \mathcal{S} \) that maximizes the total cost. This models the worst-case, fully informed adversary and aligns with standard formulations in robust optimization.
A natural variation of this model is one where the adversary is \emph{oblivious}, that is, the adversary knows the algorithm used to generate the first-stage decision but not the specific realization of \( D_1 \). In this setting, the adversary must select a scenario \( S \in \mathcal{S} \) before observing the outcome of any randomness in the algorithm. If the algorithm is deterministic, the oblivious adversary can simulate it and compute \( D_1 \), making the adaptive and oblivious models equivalent. However, when the algorithm is randomized and selects \( D_1 \sim \mathcal{D}_1 \) from a distribution over feasible decisions, the models differ: the adaptive adversary reacts to the actual realization of \( D_1 \), while the oblivious adversary can only react to its distribution.
To illustrate, consider the expected second-stage cost under an oblivious adversary: \( \max_{S \in \mathcal{S}} \mathbb{E}_{D_1 \sim \mathcal{D}_1}[cost_2( D \setminus D_1, S)] \). In contrast, under an adaptive adversary, the expected second-stage cost is \( \mathbb{E}_{D_1 \sim \mathcal{D}_1}[\max_{S \in \mathcal{S}} cost_2(D \setminus D_1, S)] \). Since the maximum of expectations is generally smaller than the expectation of maxima, the oblivious setting can yield strictly lower expected cost. On the other hand, randomization does not help in the adaptive setting: the adversary observes the realization of \( D_1 \) and can respond in the worst possible way, rendering randomization ineffective.

This distinction between models reveals an interesting game-theoretic structure in the oblivious case. The problem can be viewed as a two-player minimax game, where the algorithm selects a distribution over first-stage decisions to minimize expected cost, and the adversary selects a scenario to maximize it. By Von Neumann’s minimax theorem, a game value exists, and the optimal randomized strategy can be approximated efficiently in certain cases. In particular, when the number of scenarios is polynomial, the \emph{Multiplicative Weights Update Method}~\citep{arora2012multiplicative,plotkin1995fast} provides a principled framework to nearly-approximate the optimal randomized strategy using repeated best-response dynamics.
While we do not explore this variant further in this paper, it is a meaningful and computationally tractable direction. Our focus in this work is  the adaptive adversary setting, which represents the standard worst-case model in robust optimization and gives rise to the complexity and algorithmic challenges we address.
}

Finally, we wish to emphasize that the TSRMB problem assumes knowledge of the uncertainty set $\cal S$. If we consider a model operating under complete adversarial conditions (i.e., without a predefined uncertainty set defining scenarios), the problem lacks a bounded approximation. In such scenarios, an adversary could select a second stage  that significantly inflates the full cost. We have included a thorough discussion in Appendix \ref{newyear}, comparing our robust model to a fully adversarial model, and provided an illustrative example to elucidate this point. {\color{black} Unlike the rest of the paper, which uses approximation guarantees relative to an optimal robust solution, this fully adversarial model is analyzed using competitive analysis, where the performance is measured against the hindsight optimum. }

\subsection{Outline} 
The paper is organized as follows. We review relevant literature in Section \ref{sec:relatedwork}. In Section \ref{sec:preliminaries}, we introduce some preliminary results on the hardness of TSRMB. We study the performance of the greedy approach and finally present a subroutine to solve the deterministic TSRMB with one scenario. In Section \ref{sec:explicit}, we study TSRMB with explicit scenarios. 
In Section \ref{sec:implicit}, we consider the case of general uncertainty set wihout surplus and we study the case of
implicit scenarios in Section~\ref{sec:general-surplus}.   We present our numerical experiments on a set of real-life taxi data from the city of Shenzhen in Section \ref{sec:experiments}. Finally, Section \ref{sec:variants} explores other variants of the two-stage robust matching problem with different cost functions.

\section{Related Work}\label{sec:relatedwork}

\textit{Online bipartite matching}. Finding a maximum cardinality bipartite matching
is one of the classical problems in algorithmic graph theory and  combinatorial optimization as it arises naturally in several applications such as resource allocation, scheduling, and online advertising. The online version of this
problem has received a considerable amount of attention over the years  \citep{mehta2013online}. In this setting, we are given a known set of \textit{servers}
while a set of \textit{clients} arrive online and upon arrival, each client can be
matched to a server irrevocably. The online matching problem was first studied by \cite{karp1990optimal} in the adversarial model where the graph
is unknown; when a client arrives it reveals its incident edges. \cite{karp1990optimal} and \cite{birnbaum2008line}
proved that the simple randomized RANKING algorithm achieves $(1-1/e)$ competitive ratio and this
factor is the best possible performance. Since then, many online variants have been studied in great depth (see survey of \cite{mehta2013online}).
This includes problems the study of problems like AdWords by \cite{buchbinder2007online, devanur2009adwords} and \cite{mehta2007adwords}, vertex-weighted matching by \cite{aggarwal2011online} and \cite{devanur2013randomized}, edge-weighted matching by \cite{haeupler2011online} and \cite{korula2009algorithms},
stochastic matching by \cite{feldman2009online,manshadi2012online, mehta2014online} and \cite{feldman2016online}, random vertex arrival by \cite{goel2008online, karande2011online} and \cite{jaillet2014online}, and batch arrivals by \cite{lee2017maximum, zhang2017taxi} and \cite{feng2020batching}. 

\vspace{2mm}

\noindent \textit{Online minimum weight matching.} In the \textit{online bipartite metric matching problem}, servers
and clients correspond to points from a metric space. Upon arrival, each client must be
matched to a server irrevocably, at a cost equal to their distance. The objective is to find the minimum weight maximum cardinality matching. For general metric spaces, \cite{khuller1994line} and \cite{kalyanasundaram1993online}
proved that there is a tight bound of $(2n - 1)$ on the competitiveness factor of deterministic online
algorithms, where $n$ is the number of servers. In the random arrival model, a natural
question is whether randomization could help obtain an exponential improvement
for general metric spaces.
\cite{meyerson2006randomized} and \cite{bansal2007log} provided poly-logarithmic competitive randomized algorithms for the problem. Recently, \cite{raghvendra2016robust} presented a
$O(\log{n})$-competitive algorithm in the random arrival model.

\vspace{2mm}

\noindent \textit{Two-stage stochastic combinatorial optimization.} Within two-stage stochastic optimization, matching has been studied under various models and different objectives. \cite{kong2006factor} introduce the stochastic two-stage maximum matching problem. They prove that the problem is NP-hard when the number of scenarios is an input of the problem and provide $1/2$-approximation algorithm. \cite{escoffier2010two} further study this problem, strengthen the hardness results, and slightly improve the approximation ratio. \cite{katriel2008commitment} study two stochastic minimum weight maximum matching problems in bipartite graphs. In their two variants, the uncertainty is respectively on the second-stage cost of the edges and on the set of vertices to be matched. \cite{feng2020batching} study $K$-stage variants of vertex weighted
bipartite b-matching and AdWords problems, where online vertices arrive in $K$ batches. More recently,  \cite{feng2021two} initiate the study and present online competitive algorithms for vertex-weighted two-stage stochastic matching as well as two-stage joint matching and pricing \red{with application to ride hailing \cite{feng2023two}}.

\vspace{2mm}

\noindent \textit{Two-stage robust combinatorial optimization.} Within two-stage robust optimization, matchings have not been studied extensively.
\cite{matuschke2018maintaining} proposed a two-stage
robust model for minimum weight matching with recourse. 
In the first stage, a perfect matching between
$2n$ given nodes must be selected; in the second stage $2k$ new nodes are introduced. The goal is to produce
$\alpha$-competitive matchings at the end of both stages, and such that the number of edges removed from the first
stage matching is at most $\beta k$. Our model for TSRMB is different in 3 main aspects: 1) In our model, the second-stage vertices come from an uncertainty set whereas in their model the only information given is the number of second-stage vertices. 2) We do not allow any recourse and our first-stage matching is irrevocable. 3) Our second-stage cost is the bottleneck weight instead of the total weight. In general, a bottleneck optimization problem on a graph with edge costs is the problem of finding a subgraph of a certain kind that minimizes the maximum edge cost in the subgraph. The bottleneck objective contrasts with the more common objective of minimizing the sum of edge costs. Several Bottleneck problems have been considered, e.g. Shortest Path Problem by \cite{kaibel2006bottleneck} and \cite{bose2004approximatin}, Spanning Tree and Maximum Cardinality Matching by \cite{gabow1988algorithms}, and TSP problems by \cite{garfinkel1978bottleneck} (see \cite{hochbaum1986unified} for a compilation of graph bottleneck problems).

\red{In terms of application, our work relates to the broader literature that focuses on developing optimization models and algorithms for ride-sharing applications. Notable references in this area include recent works by \cite{bertsimas2019online,feng2021two}, among others.}

\section{Preliminaries}\label{sec:preliminaries}
In this section, we study the hardness of approximation for TSRMB. We also examine the challenges with  the natural greedy approach for solving TSRMB. We finally present a subroutine to solve the single scenario case that we will use later on in our general algorithms.

\subsection{NP-hardness}\label{sec:hardness}

We demonstrate that the TSRMB problem is NP-hard under both implicit and explicit models of uncertainty. In the explicit model, Theorem \ref{thm:hardness_robust} establishes the NP-hardness of TSRMB even for two scenarios. Furthermore, {\color{black}we prove that approximating TSRMB within a factor better than $3-\epsilon$, for any $\epsilon>0$, is NP-hard for three scenarios.}

In the implicit model of uncertainty, Theorem \ref{thm:hardness2} asserts the NP-hardness of approximating TSRMB within a factor better than $3-\epsilon$. This result holds even   when the uncertainty set is defined by the budget of uncertainty \eqref{eq:budget}. 
{\color{black}
Furthermore, we show that TSRMB under the budgeted uncertainty set \eqref{eq:budget} is $\Sigma_2^P$-hard. Specifically, we prove that deciding whether the optimal value of TSRMB is equal to a given threshold, is $\Sigma_2^P$-complete. This result is significant because $\Sigma_2^P$-completeness implies that, unless the polynomial hierarchy collapses, there does not exist a compact Integer Programming (IP) formulation for TSRMB with budgeted uncertainty~\citep{grune2023completeness}.
}
It is important to note that the number of scenarios in a budget of uncertainty set, with a general parameter $k$, can be exponentially large. We also show in Theorem \ref{thm:hardness3} that, even for $k=1$ (where number of scenarios is polynomial), {\color{black}approximating TSRMB within a factor better than $3-\epsilon$ is NP-hard.} 

The proof of Theorem \ref{thm:hardness_robust} employs a reduction from the 3-Dimensional Matching Problem. The first result in Theorem \ref{thm:hardness2} uses a reduction from the Clique Problem and {\color{black} the $\Sigma_2^P$-completeness result uses a reduction from the Clique Interdiction Problem.} The proof of Theorem \ref{thm:hardness3}  uses a reduction from the Set Cover Problem. Detailed proofs of these theorems are provided in Appendix~\ref{appendix:hardness}.

\red{Note that in the explicit model with a polynomial number of scenarios, it is clear that the problem is in NP. However, in the implicit model, even though the problem can be described with a polynomial size input, there could be exponentially many scenarios and, in general, we cannot compute the total cost function in polynomial time. In fact,  the proof of Theorem \ref{thm:hardness2}, establishes that computing the objective function for a given first-stage solution is NP-hard and can not be approximated within a factor better than $3-\epsilon$ for any $\epsilon>0$ unless P=NP.}


\begin{theorem}\label{thm:hardness_robust}
In the explicit model of uncertainty, TSRMB is NP-hard even when the number of scenarios is  two. {\color{black}Furthermore, when the number of scenarios is three, there is no $(3-\epsilon)$-approximation algorithm for any fixed $\epsilon > 0$, unless P = NP.}
\end{theorem}

\begin{theorem}
\label{thm:hardness2}
In the implicit model of uncertainty, under the budget of uncertainty set \eqref{eq:budget},  there is no $(3-\epsilon)$-approximation algorithm for TSRMB for any fixed $\epsilon > 0$, unless $P = NP$. {\color{black}Furthermore, TSRMB  under the budget of uncertainty set \eqref{eq:budget} is $\Sigma_2^P$-hard.}
\end{theorem}

\color{black}
\begin{theorem}
\label{thm:hardness3}
In the implicit model of uncertainty, under the budget of uncertainty set \eqref{eq:budget} even when $k=1$, there is no $(3-\epsilon)$-approximation algorithm for TSRMB for any fixed $\epsilon > 0$, unless $P = NP$.
\end{theorem}

\color{black}
\subsection{Greedy Approach}
A natural greedy approach is to choose the optimal matching for the first-stage riders $R_1$ without considering the uncertainty in the second stage in any way.
 We show via a counterexample that this greedy approach could lead to a bad solution for TSRMB with a total cost that scales linearly with $m$ (cardinality of  $R_1$) while $OPT$ is a constant, even when there is only one scenario.

\noindent
{\bf Counterexample.}
Consider the line example depicted in Figure \ref{fig:greedy}, where we have $m$ first-stage riders and $m+1$ drivers that alternate on a line with distances $1$ and $1-\epsilon$. There is only one second-stage rider at the right endpoint of the line. A greedy matching would minimize the first-stage cost by matching the first-stage riders using the dashed edges, with an average weight of $1-\epsilon$. When the second-stage scenario is revealed, the rider can only be matched with the farthest driver for a cost of $1+ (2-\epsilon)m$. Therefore the total cost of the greedy approach is $(2-\epsilon)(m+1)$, while the optimal cost is clearly equal to $2$. This example shows that the cost of the greedy algorithm for TSRMB could be far away from the optimal cost with an approximation ratio that scales with the dimension of the problem. The same observation generalizes to any number of scenarios by simply duplicating the second-stage rider. Therefore any attempt to have a good approximation to the TSRMB needs to consider the second-stage riders. In particular, we have the following lemma.

\begin{figure}[!t]
    \centering
    \includegraphics[scale=0.45]{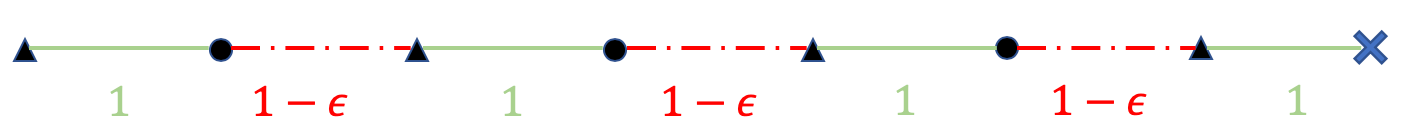}
    \caption{Example instance on the line. Riders in the first stage are depicted by black dots and
drivers are indicated as black triangles. The second-stage rider is depicted as a blue cross. First and second-stage optimum are depicted
by solid green edges. 
}
    \label{fig:greedy}
\end{figure}



\begin{lemma}
The cost of the Greedy algorithm can be $\Omega(m)\cdot~OPT$.
\end{lemma}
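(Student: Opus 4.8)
The plan is to turn the line instance already sketched into a formal lower-bound construction and then simply compute the two objective values. For a parameter $m$, I would place $m+1$ drivers and $m$ first-stage riders alternately on a line so that consecutive points are separated by distances alternating between $1$ and $1-\epsilon$, and put the lone second-stage rider at the right endpoint. All distances are inherited from the line, so the triangle inequality holds automatically and this is a valid metric instance of TSRMB with a single scenario.

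First I would pin down exactly what greedy does. Since greedy ignores the second stage, its first-stage choice is precisely a minimum-weight perfect matching between $R_1$ and the drivers it selects, which on this configuration is the matching using all the short edges of weight $1-\epsilon$; its average first-stage cost is therefore $1-\epsilon$. The step to verify is that this choice forces the unused driver to be the one at the far (left) end of the line, so the second-stage rider at the right endpoint can only be served by a driver at distance $\Theta(m)$: concretely the bottleneck edge has length $1+(2-\epsilon)m$, since the line has total length $(2-\epsilon)m$. Adding the two contributions gives a greedy objective of $(1-\epsilon)+\big(1+(2-\epsilon)m\big)=(2-\epsilon)(m+1)=\Omega(m)$.

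Next I would exhibit a cheap feasible solution to upper bound $OPT$. Using the complementary matching (the long edges of weight $1$) leaves a driver adjacent to the right endpoint free, so the second-stage rider is matched within $O(1)$; a direct computation gives total cost $2$, hence $OPT\le 2$. Combining the two bounds yields a ratio of $\Omega(m)$, which is exactly the claim. Finally, to confirm the gap is not an artifact of having a single scenario, I would note that duplicating the second-stage rider across several scenarios leaves both the first-stage matching and the worst-case second-stage bottleneck unchanged, so the same $\Omega(m)$ separation persists for any number of scenarios.

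The only genuinely delicate step is the second one: arguing rigorously that minimizing the first-stage cost leaves precisely the far driver unused, and that the induced worst-case bottleneck is $\Theta(m)$ rather than something smaller. This reduces to checking that the short-edge matching is the unique first-stage optimum (up to which endpoint driver is freed) and then reading off the largest forced second-stage distance directly from the line geometry; every other part of the argument is elementary arithmetic.
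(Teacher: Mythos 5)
Your construction is exactly the paper's counterexample (alternating drivers and riders on a line with gaps $1$ and $1-\epsilon$, one second-stage rider at the right endpoint), and your computations of the greedy cost $(2-\epsilon)(m+1)$, the bound $OPT\le 2$, and the duplication trick for multiple scenarios all match the paper's argument. The proof is correct and takes essentially the same approach.
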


{\color{black}
\subsection{Trade-off between the First and Second Stages}}

Another natural question in the context of our two-stage model is the necessary trade-off between the two stages. In particular, consider the single-stage matching problem that ignores the second stage; i.e., the goal is to match the riders $R_1$ with drivers from $D$ with the minimum cost. The problem is given by
\begin{equation}
    \label{first-stage}
    \min_{D_1 \subset D} \left\{ \text{cost}_1(D_1, R_1) \right\}.
\end{equation}
The question is whether there exists an approximately optimal solution for Problem \eqref{first-stage} that is also approximately optimal for our two-stage robust matching problem. We show that the answer to this question is negative by presenting an instance where there exists no solution $D_1$ that is near-optimal for both problems. Any near-optimal solution for the single-stage problem, i.e., within a constant factor from the optimal one, can be arbitrarily bad for the two-stage model. In particular, first-stage optimality has to be sacrificed entirely in order to get a robust near-optimal solution for the two-stage model in the worst case. Consider the following example, which is a simple modification of the previous example in Figure \ref{fig:greedy}.

\noindent
\red{
\textbf{Example.}
Consider the line example depicted in Figure \ref{fig:tradeoff}, where we have $m$ first-stage riders and $m+1$ drivers that alternate on a line with distances $1$ and $\epsilon$. There is only one second-stage rider at the right endpoint of the line. An optimal solution for the single-stage Problem \eqref{first-stage} would minimize the cost by matching the first-stage riders using the dashed edges, each dashed edge has weight $\epsilon$, so the average weight is $\epsilon$. Therefore, the optimal objective value of \eqref{first-stage} is equal to $\epsilon$. Note that any other feasible solution for \eqref{first-stage} needs to use at least one solid green edge, and therefore one of the edges has a weight of at least $1$, resulting in an average weight in this matching of at least $1/m$. By choosing $\epsilon$ arbitrarily small, the gap between the objective value of any feasible solution and the optimal solution for \eqref{first-stage} can be arbitrarily bad. It is sufficient to choose $\epsilon=1/m^2$; in that case, the gap is $\Omega(m)$. Hence, in this example there is {\color{black}no near-optimal} solution within a constant factor from the optimal one.
Note that using the optimal solution of \eqref{first-stage} as the first-stage solution in the two-stage model, the second-stage rider can only be matched with the farthest driver for a cost of $1 + (1+\epsilon)m$. Therefore, {\color{black}the total cost of this feasible solution in the two-stage model is $(1+\epsilon)(m+1)$, while the optimal total cost of the two-stage model is clearly equal to $2$.} We conclude through this example that an optimal solution for the single first-stage problem can be arbitrarily bad for the two-stage robust problem, and in order to get an optimal or {\color{black}even near-optimal} solution for the the two-stage model, the {\color{black}entire} first-stage optimality has to be sacrificed.
\begin{figure}[h]
    \centering
    \includegraphics[scale=0.45]{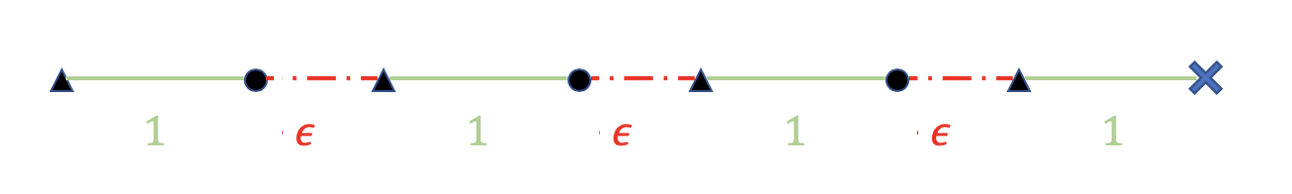}
    \caption{Example instance on the line. Riders in the first stage are depicted by black dots, and
    drivers are indicated as black triangles. The second-stage rider is depicted as a blue cross. First and second-stage optima are depicted
    by solid green edges.}
    \label{fig:tradeoff}
\end{figure}
}

\subsection{Single Scenario}
The {\em deterministic} version of the TSRMB problem, i.e., when there is only a single scenario in the second stage, can be solved exactly in polynomial time.  This is a simple preliminary result which we  need for the general case. Denote $S$ a single second-stage scenario. The  instance $(R_1,S,D)$ of TSRMB is then simply given by
 $$ \min\limits_{D_1 \subset D}\Big\{ cost_1(D_1,R_1) +  cost_2(D\setminus D_1, S)\Big\}. $$
Since the second-stage problem is a bottleneck problem, the value of the optimal second-stage cost $w$ is one of the edge weights between $D$ and $S$. We iterate over all possible values of $w$ (at most $|S|\cdot|D|$ values), delete all edges between $R_2$ and $D$ with weights strictly higher than $w$ and  set the weight of the remaining edges between $S$ and $D$ to zero. This reduces the problem to finding a minimum weight maximum cardinality matching. 
Below, are presented the details of our algorithm. 
We refer to it  as \textit{TSRMB-1-Scenario} (or Algorithm \ref{alg:singlescenario}) in the rest of this paper.


We define the bottleneck graph of $w$ to be $BOTTLENECKG(w) = (R_1\cup S \cup D, E_1 \cup E_2)$ where $E_2 = \{(i, j) \in D \times S, \ d(i,j) \leq w\}$ and $E_1=\{(i, j) \in D \times R_1\}$. Furthermore, we assume that there are $q$ edges $\{e_1, \ldots, e_q\}$ between $S$ and $D$ with weights  $w_1 \leq w_2 \leq \ldots \leq w_q.$

\begin{algorithm}
	\caption{TSRMB-1-Scenario($R_1, S, D)$}
	\label{alg:singlescenario}
    \begin{algorithmic}[1]
    \REQUIRE{First-stage riders $R_1$, scenario $S$ and drivers $D$.}
    \ENSURE{First-stage decision $D_1$.}
	\FOR{$i \in \{1, \ldots, q\}$}
	\STATE{$G_i$ :=  $BOTTLENECKG(w_i)$.}
	\STATE{Set all weights between $D$ and $S$ in $G_i$ to be 0.}
	\STATE{$M_i:=$ minimum weight maximum cardinality matching on $G_i$.}
	\IF{$R_1\cup S$ is not completely matched in $M_i$}
	    \STATE{$D_1^i:=\emptyset$}
	\ELSE
	    \STATE{$D_1^i:=$ first-stage drivers in $M_i$.}
	\ENDIF
	\ENDFOR
	\RETURN $D_1$ = $\argmin\limits_{ D_1^i :    1 \leq i \leq q} \Big\{ cost_1(D_1^i,R_1) +  cost_2(D\setminus D_1^i, S)\Big\}$.
	\end{algorithmic} 
\end{algorithm}
Note that when $D_1^i =\emptyset$, we adopt the convention that $cost_1(D_1^i,R_1)= \infty$. So, the $\argmin$ in the last step of Algorithm \ref{alg:singlescenario} is only taken over values of $i$ for which  $D_1^i \neq \emptyset$.

\begin{lemma}\label{lemma:singlescenario}
Algorithm \ref{alg:singlescenario} provides an exact solution to TSRMB with a single scenario.
\end{lemma}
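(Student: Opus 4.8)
The plan is to prove exactness by sandwiching the value returned by the algorithm between $OPT$ from both sides. Since the last line of the algorithm returns the candidate $D_1^i$ minimizing the true objective $f$, it suffices to show that (i) every non-failing iteration produces a \emph{feasible} first-stage decision, which forces the returned cost to be at least $OPT$, and (ii) at least one iteration produces a decision of cost at most $OPT$.

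For (i), I would argue that whenever iteration $i$ does not output a certificate of failure, the matching $M_i$ saturates $R_1 \cup S$. Writing $D_1^i$ for the drivers matched to $R_1$ in $M_i$, the drivers matched to $S$ are then disjoint from $D_1^i$, i.e.\ they lie in $D\setminus D_1^i$; hence $|D_1^i| = m$ and $D_1^i$ is a valid first-stage solution. By definition of $OPT$ as the minimum of $f$ over feasible solutions, $f(D_1^i)\ge OPT$, so the returned solution has cost at least $OPT$.

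For (ii), I would single out the iteration matching the optimum. Let $D_1^*$ be optimal, let $M_1^*$ be the minimum-weight perfect matching between $R_1$ and $D_1^*$, and let $M_2^{S,*}$ be a bottleneck matching of $S$ into $D\setminus D_1^*$ with bottleneck value $w^* = OPT_2$. Because the second stage cost is a bottleneck, $w^*$ equals one of the recorded weights, say $w^* = w_{i_0}$. The key observation is that $M_1^*\cup M_2^{S,*}$ is a matching in $G_{i_0} = BOTTLENECKG(w_{i_0})$: all $R_1$--$D$ edges are present, and every edge of $M_2^{S,*}$ has weight at most $w^* = w_{i_0}$ and therefore survives in the bottleneck graph. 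This matching saturates $R_1\cup S$, so iteration $i_0$ does not fail and, since some matching reaches cardinality $m+|S|$, every maximum-cardinality matching in $G_{i_0}$ also saturates $R_1\cup S$. Because the $S$--$D$ edges are set to weight $0$, the weight of $M_{i_0}$ equals the weight of its $R_1$-portion, and as $M_{i_0}$ has minimum weight among maximum-cardinality matchings we get $w(M_{i_0}) \le w(M_1^*\cup M_2^{S,*}) = w(M_1^*)$. The $R_1$-portion of $M_{i_0}$ is one perfect matching between $R_1$ and $D_1^{i_0}$, so $cost_1(D_1^{i_0},R_1)\le \tfrac{1}{m}w(M_{i_0})\le \tfrac{1}{m}w(M_1^*) = OPT_1$; and since $S$ is matched inside $D\setminus D_1^{i_0}$ using only edges of weight at most $w_{i_0}$, we have $cost_2(D\setminus D_1^{i_0}, S)\le w_{i_0} = OPT_2$. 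Hence $f(D_1^{i_0})\le OPT$, completing the sandwich.

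The step I expect to require the most care is the cost comparison in (ii): one must keep the \emph{definition} of $cost_1$ as a \emph{minimum}-weight perfect matching separate from the specific $R_1$-matching that appears inside $M_{i_0}$, using the inequality $cost_1(D_1^{i_0},R_1)\le \tfrac{1}{m}w(M_{i_0}\cap (R_1\times D))$ rather than an equality. The reason the algorithm works is precisely that zeroing out the $S$--$D$ edge weights makes the minimum-weight maximum-cardinality matching optimize the first-stage weight, while the cardinality constraint independently enforces that $S$ can still be served within the bottleneck budget $w_{i_0}$. A minor supporting point is the claim that once some matching saturates $R_1\cup S$ every maximum-cardinality matching does so as well, which relies on the standing assumption that there are enough drivers to serve both stages.
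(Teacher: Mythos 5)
Your proof is correct and follows essentially the same route as the paper's: identify the iteration $i_0$ with $w_{i_0}=OPT_2$, observe that the optimal solution survives as a saturating matching in $G_{i_0}$, and use the zeroed second-stage weights to conclude that the minimum-weight maximum-cardinality matching achieves first-stage cost at most $OPT_1$ and second-stage cost at most $OPT_2$. You simply make explicit two points the paper leaves implicit (that every non-failing iteration yields a feasible solution, hence the returned value is also at least $OPT$, and that $cost_1$ is bounded via an inequality rather than an equality since it is itself a minimum); both additions are sound.
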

\proof{\textit{Proof}.}
 Let $OPT_1$ and $OPT_2$ be the first and second-stage cost of an optimal solution, and $i \in \{1,\ldots,q\}$ such that $w_i = OPT_2$. In this case, $G_i$ contains all the edges of this optimal solution. By setting all the edges in $E_2$ to 0, we are able to compute a minimum weight maximum cardinality matching between $R_1 \cup S$ and $D$ that matches both $R_1$ and $S$ and minimizes the weight of the edges matching $R_1$. The first-stage cost of this matching is at most $OPT_1$, the second-stage cost is clearly at most $OPT_2$ because we only allowed edges with weight at most $OPT_2$ in $G_i$.
 \hfill \Halmos
 \endproof


\section{Explicit Scenarios}\label{sec:explicit}
In this section, we consider TSRMB under the explicit model of uncertainty where we have an explicit list of scenarios for the second-stage and we optimize over the worst-case scenario realization. We first present a constant factor approximation for TSRMB for the case of two scenarios. We then extend our result to the case of any fixed number of scenarios. However, the approximation factor scales with the number of scenarios $p$ as $O(p^{1.59})$. The idea of our algorithm is to reduce the instance of TSRMB with $p$ scenarios to an instance with only a single representative scenario by losing a small factor and then use 
Algorithm \ref{alg:singlescenario} to solve the single scenario instance. To illustrate the core ideas of our algorithm, we focus on the case of two scenarios first and  then extend it to a constant number of scenarios.
 
 \subsection{Two Scenarios}

Consider two scenarios $\mathcal{S} = \{ S_1, S_2\}$. 
First, we can assume without loss of generality that we know the exact value of $OPT_2$ which corresponds to one of the edges connecting second-stage riders $R_2$ to drivers $D$ (we can iterate over all the weights of second-stage edges).
 We construct  a representative scenario that serves as a proxy for $S_1$ and $S_2$ as follows. In the second stage, if a pair of riders $i \in S_1$ and $j\in S_2$ are served by the same driver in the optimal solution, then they should be close to each other. Therefore, we can consider a single representative rider for each such pair. While it is not easy to guess all such pairs, we can approximately compute the representative riders by solving a maximum matching on $S_1 \cup S_2$ with edges at most $2 OPT_2$. More formally,  let $G_I$ be the induced bipartite subgraph of $G$ on $S_1 \cup S_2$ containing only edges between $S_1$ and $S_2$ with weight less than or equal to $2OPT_2$. We compute a maximum cardinality matching $M$ between $S_1$ and $S_2$ in $G_I$, and construct a representative scenario containing $S_1$ as well as the unmatched riders of $S_2$. We solve the single scenario problem on this representative scenario using Algorithm \ref{alg:singlescenario} and return its optimal first-stage solution. We show in Theorem   \ref{2scenarios} that this solution leads to 5-approximation for our problem. Our algorithm is described below.

{\small \begin{algorithm}
	\caption{Two explicit scenarios.} 
	\label{alg:2scenarios}
	\begin{algorithmic}[1]
	\REQUIRE{First-stage riders $R_1$, two scenarios $S_1$ and $S_2$, drivers $D$ and value of $OPT_2$.}
	\ENSURE{First-stage decision $D_1$.}
	\STATE{Let $G_I$ be the induced subgraph of $G$ on $S_1 \cup S_2$ with only the edges between $S_1$ and $S_2$ of weights at most  $2OPT_2$ .}
	\STATE{Set $M:=$ maximum cardinality matching between $S_1$ and $S_2$ in ${G_I}$.}
	\STATE{Set $S_2^{Match}:= \{ r \in S_2 \ | \ \exists \ s \in S_1 \ \mbox{ s.t }(s,r) \in M \}$ and $S_2^{Unmatch} = S_2 \setminus S_2^{Match}$.}
	\RETURN $D_1:=$ TSRMB-1-Scenario($R_1, S_1 \cup S_2^{Unmatch}, D)$.
	\end{algorithmic} 
\end{algorithm}
}

\begin{theorem}\label{2scenarios}
Algorithm \ref{alg:2scenarios} yields a solution with total cost at most $OPT_1 + 5OPT_2$ for TSRMB with 2 scenarios.
\end{theorem}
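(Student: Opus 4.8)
The plan is to exploit the exactness of Algorithm \ref{alg:singlescenario}. The algorithm returns the first-stage decision $D_1$ that is optimal for the single representative scenario $S^{\ast} := S_1 \cup S_2^{Unmatch}$, so by Lemma \ref{lemma:singlescenario} the quantity $cost_1(D_1,R_1) + cost_2(D\setminus D_1, S^{\ast})$ equals the optimal single-scenario value on $S^{\ast}$. Writing $w^{\ast} := cost_2(D\setminus D_1, S^{\ast})$, I would split the argument into two independent bounds: (A) an upper bound on this single-scenario optimum, $cost_1(D_1,R_1) + w^{\ast} \le OPT_1 + 3\,OPT_2$; and (B) a comparison of the true two-scenario objective against the single-scenario value, $f(D_1) \le cost_1(D_1,R_1) + w^{\ast} + 2\,OPT_2$. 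Chaining (A) into (B) gives $f(D_1) \le (OPT_1 + 3\,OPT_2) + 2\,OPT_2 = OPT_1 + 5\,OPT_2$, which is the claim.

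For (B), fix the bottleneck matching $\phi$ realizing $w^{\ast}$ between $S^{\ast}$ and $D\setminus D_1$. Since $S_1 \subseteq S^{\ast}$, restricting $\phi$ to $S_1$ matches $S_1$ within bottleneck $w^{\ast}$, so $cost_2(D\setminus D_1, S_1) \le w^{\ast}$. To handle $S_2 = S_2^{Match} \cup S_2^{Unmatch}$, I would keep each $r \in S_2^{Unmatch} \subseteq S^{\ast}$ on its own driver $\phi(r)$ (distance $\le w^{\ast}$), and route each $r \in S_2^{Match}$ to $\phi(s)$, where $s \in S_1$ is its partner under $M$. Because $(s,r) \in M \subseteq G_I$ we have $d(s,r) \le 2\,OPT_2$, so $d(r,\phi(s)) \le d(r,s) + d(s,\phi(s)) \le 2\,OPT_2 + w^{\ast}$; and since $M$ is a matching and $\phi$ is injective, the assigned drivers are pairwise distinct. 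Hence $cost_2(D\setminus D_1, S_2) \le w^{\ast} + 2\,OPT_2$, and taking the maximum over the two scenarios gives (B).

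The heart of the argument is (A). I would use the true optimal first-stage set $D_1^{\ast}$ as a feasible solution for the single-scenario instance on $S^{\ast}$, so that the single-scenario optimum is at most $OPT_1 + cost_2(D\setminus D_1^{\ast}, S^{\ast})$; it then remains to exhibit a matching of $S^{\ast}$ into $D\setminus D_1^{\ast}$ of bottleneck at most $3\,OPT_2$. Let $M_1^{\ast}$ and $M_2^{\ast}$ be the optimal bottleneck matchings of $S_1$ and of $S_2$ into $D\setminus D_1^{\ast}$, each of bottleneck at most $OPT_2$. The natural attempt serves $S_1$ through $M_1^{\ast}$ and $S_2^{Unmatch}$ through $M_2^{\ast}$; the obstacle, which I expect to be the most delicate step, is driver \emph{collisions}, i.e. a driver used by some $s \in S_1$ under $M_1^{\ast}$ and by some $r$ under $M_2^{\ast}$.

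This is precisely where the maximality of $M$ enters. A collision forces $d(s,r) \le 2\,OPT_2$, hence $(s,r) \in G_I$, so $s$ and $r$ cannot both be $M$-unmatched (otherwise $(s,r)$ would augment $M$); in particular the driver sets $M_1^{\ast}(S_1^{U})$ and $M_2^{\ast}(S_2^{Unmatch})$ are disjoint, where $S_1^{U}$ denotes the $M$-unmatched riders of $S_1$, so the $M$-unmatched riders on both sides are served directly within $OPT_2$. Moreover, each shared driver yields a distinct $G_I$-edge between $S_1$ and $S_2$, and these edges form a matching in $G_I$, so maximality gives that $|M|$ is at least the number of shared drivers; this in turn certifies $|S^{\ast}| \le |D\setminus D_1^{\ast}|$, so a perfect assignment is possible at all. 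To clear the remaining collisions among $M$-matched riders, I would reroute along alternating paths in $M_1^{\ast} \cup M_2^{\ast} \cup M$ (whose connected components are simple paths and cycles, since every vertex has degree at most two): a rider forced off its $M_i^{\ast}$-driver is pushed onto the driver of its $M$-partner, paying a single detour through one $G_I$-edge, so every individual distance stays bounded by $OPT_2 + 2\,OPT_2 = 3\,OPT_2$. Maximality of $M$ excludes the only locally infeasible configuration (a length-two component whose two endpoints are both $M$-unmatched), so each rerouting terminates and yields an injective assignment of bottleneck at most $3\,OPT_2$, establishing (A) and completing the proof.
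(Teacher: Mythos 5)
Your proposal is correct and follows essentially the same route as the paper: your bound (A) is exactly Lemma~\ref{lemma:2scenarios} (feasibility of $D_1^*$ for the representative scenario, with unmatched riders served directly and collided $S_1$ riders rerouted to the drivers of their $M$-partners via augmenting-path arguments, each edge costing at most $3OPT_2$), and your bound (B) is exactly the paper's final step of serving $S_2^{Match}$ through the drivers assigned to $S_1$ at an extra cost of $2OPT_2$. The only differences are presentational (organizing the collision resolution via the path/cycle decomposition of $M_1^*\cup M_2^*\cup M$, and certifying $|M|\ge q$ by noting the shared-driver edges form a matching in $G_I$), at the same level of rigor as the paper's own argument.
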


Recall that $OPT_1$ and $OPT_2$ are respectively the first-stage and second-stage cost of an optimal solution for our TSRMB problem with two scenarios. The proof of Theorem \ref{2scenarios} relies on the following structural lemma where we show that the set  $D_1$ returned by Algorithm \ref{alg:2scenarios} yields a total cost at most $(OPT_1 + 3OPT_2)$ when evaluated only on the single representative scenario $S_1 \cup S_2^{Unmatch}$.

\begin{lemma}\label{lemma:2scenarios}
Let $D_1$ be the set of first-stage drivers returned by Algorithm \ref{alg:2scenarios}. Then,
{\small $$cost_1(D_1,R_1) + cost_2(D\setminus D_1, S_1 \cup S_2^{Unmatch}) \leq OPT_1 + 3OPT_2.$$
}
\end{lemma}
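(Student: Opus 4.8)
The plan is to exhibit a specific feasible first-stage solution for the single representative scenario $S_1 \cup S_2^{Unmatch}$ whose cost is at most $OPT_1 + 3OPT_2$, and then invoke Lemma \ref{lemma:singlescenario}: since $D_1$ is returned by \textit{TSRMB-1-Scenario} on exactly this scenario, it is optimal for that single-scenario instance, so its cost can only be lower than any feasible solution we construct. Thus it suffices to build one good feasible solution and bound its cost.

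\textbf{Constructing the candidate solution.} Let $D_1^*$ be the optimal first-stage drivers for the original two-scenario problem. I would keep the same first-stage matching, so the first-stage cost stays exactly $OPT_1$. The work is in showing that $D \setminus D_1^*$ can serve the representative scenario $S_1 \cup S_2^{Unmatch}$ with bottleneck cost at most $3OPT_2$. For this I would combine three matchings. First, the optimal second-stage matching for $S_1$ assigns each rider in $S_1$ a driver in $D \setminus D_1^*$ at distance $\le OPT_2$. Second, the optimal second-stage matching for $S_2$ does the same for $S_2$, in particular for the unmatched riders $S_2^{Unmatch}$. The difficulty is that these two optimal matchings may use the same drivers, so they cannot simply be unioned; I need to free up drivers to serve $S_2^{Unmatch}$ without conflict.

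\textbf{The main obstacle: resolving driver contention via the matching $M$.} This is the heart of the argument. A rider $r \in S_2^{Unmatch}$ is served in the optimal $S_2$-solution by some driver $d$; if $d$ is not used by any $S_1$-rider, we keep that edge (cost $\le OPT_2$) and are done. The trouble is when $d$ is already assigned to some $s \in S_1$ in the optimal $S_1$-solution. In that case $d(s,d) \le OPT_2$ and $d(r,d) \le OPT_2$, so by the triangle inequality $d(s,r) \le 2OPT_2$, meaning $s$ and $r$ are adjacent in $G_I$. Since $r$ is \emph{unmatched} in the maximum matching $M$, by maximality $s$ must be matched in $M$ — otherwise $(s,r)$ could be added to $M$, contradicting maximality. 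I would use this to reroute: intuitively, the representative scenario already absorbs $S_1$, and the structure of $M$ guarantees that every unmatched $S_2$-rider whose optimal driver collides with $S_1$ can be handled by redirecting along an alternating-path/augmenting-type argument. The cleanest route is to show each $r \in S_2^{Unmatch}$ can be assigned a distinct driver at distance $\le 3OPT_2$: reach $s$'s optimal $S_1$-driver at cost $\le OPT_2$, hop from that driver to $s$ ($\le OPT_2$), then from $s$ to $r$ via the $G_I$ edge ($\le 2OPT_2$) — and then argue the triangle inequalities compose to a bound of $3OPT_2$ on a genuinely available driver, using that the $M$-partner of $r$ frees the relevant capacity.

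\textbf{Assembling the bound.} Once every rider in $S_1 \cup S_2^{Unmatch}$ is matched to a distinct driver in $D \setminus D_1^*$ with every edge of length at most $3OPT_2$, the bottleneck second-stage cost of this feasible solution is at most $3OPT_2$, and its first-stage cost is $OPT_1$. Hence
\[
cost_1(D_1^*,R_1) + cost_2(D\setminus D_1^*, S_1 \cup S_2^{Unmatch}) \le OPT_1 + 3OPT_2.
\]
Because $D_1$ minimizes $cost_1(\cdot,R_1) + cost_2(D\setminus \cdot, S_1 \cup S_2^{Unmatch})$ over all first-stage choices (Lemma \ref{lemma:singlescenario}), the solution $D_1$ satisfies the same inequality, which is exactly the claim. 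The delicate point throughout is ensuring the drivers assigned to the three groups ($S_1$, the $S_2^{Unmatch}$ riders with free optimal drivers, and those with contended drivers) are all \emph{distinct}; I expect to certify this distinctness precisely through the maximum-matching property of $M$ — the matched/unmatched dichotomy on $S_2$ is what prevents two representative riders from demanding the same driver.
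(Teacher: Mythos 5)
Your overall strategy is exactly the paper's: exhibit one feasible solution for the single representative scenario $S_1 \cup S_2^{Unmatch}$ with cost at most $OPT_1 + 3OPT_2$, then invoke the exactness of \textit{TSRMB-1-Scenario} (Lemma \ref{lemma:singlescenario}). You also correctly isolate the crux (driver contention between $S_1$ and $S_2^{Unmatch}$), the key observation that a contended pair $(s,r)$ satisfies $d(s,r)\le 2OPT_2$ and hence is an edge of $G_I$, and the consequence of maximality of $M$ that $s$ must then be $M$-matched. However, the proposal stops at precisely the point where the lemma's real work begins: you never actually produce the conflict-free assignment, you only announce that "an alternating-path/augmenting-type argument" will certify distinctness. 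That argument is the bulk of the paper's proof and it is not routine. Concretely, the paper matches each contended $r_i\in S_2^{Unmatch}$ to its \emph{own} optimal $S_2$-driver (cost $\le OPT_2$) and reroutes the displaced $s_i\in S_1$ to the optimal $S_2$-driver of $s_i$'s $M$-mate $t_i\in S_2^{Match}$ (cost $\le d(s_i,t_i)+OPT_2\le 3OPT_2$), using that $r_i$ and $t_i$ lie in the same scenario $S_2$ and therefore have distinct optimal drivers. It must then handle the remaining riders of $S_1$ whose optimal drivers collide with some $t_i$, by extending vertex-disjoint augmenting paths $(r_i,s_i,t_i,s_j,\tilde t_j,\dots)$ in $M$; the vertex-disjointness is what guarantees no driver is used twice. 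None of this is in your sketch, and it cannot be compressed to "the matched/unmatched dichotomy prevents two riders from demanding the same driver" --- the collisions that need resolving are between a rerouted $S_1$-rider and \emph{another} $S_1$-rider's optimal driver, which the dichotomy alone does not address.

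Two further points in your sketch are actually wrong as written and would need repair even at the outline level. First, your rerouting is stated backwards and the arithmetic does not close: you list three hops ($r$ to $s$'s optimal driver, that driver to $s$, $s$ to $r$) totaling $4OPT_2$ and then assert a $3OPT_2$ bound; the correct single chain is either $d(r, d_s)\le d(r,s)+d(s,d_s)\le 3OPT_2$ (if you send $r$ to $s$'s driver) or, as in the paper, $d(s_i, \delta_{t_i})\le d(s_i,t_i)+d(t_i,\delta_{t_i})\le 3OPT_2$. Second, you appeal to "the $M$-partner of $r$" to free capacity, but $r\in S_2^{Unmatch}$ has no $M$-partner by definition; the capacity is freed by the $M$-partner $t_i\in S_2^{Match}$ of the displaced rider $s_i\in S_1$. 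So the proposal is a correct plan with the right ingredients, but the central combinatorial step --- constructing the assignment and proving all drivers used are distinct --- is missing, and the fragments you do give of that step are garbled.
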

\proof{\textit{Proof}.}
To prove the lemma, it is sufficient to show the existence of a matching $M_a$ between $R_1 \cup S_1 \cup S_2^{Unmatch}$ and $D$ with a total cost at most $OPT_1 + 3OPT_2$. This would imply that the optimal solution $D_1$ of TSRMB-1-Scenario($R_1, S_1 \cup S_2^{Unmatch}, D)$ has a total cost at most $OPT_1 + 3OPT_2$ and concludes the proof of the lemma. We show the existence of $M_a$ by construction.
\begin{itemize}
    \item {\bf Step 1.} We first match $R_1$ with their mates in the optimal solution of TSRMB. Hence, the first-stage cost of our constructed matching $M_a$ is $OPT_1$. 
    
    \item {\bf Step 2.} Now, we focus on $S_2^{Unmatch}$. Let $S_2^{Unmatch} = S_{12} \cup S_{22}$ be a partition of $S_2^{Unmatch}$ where $S_{12}$ contains riders with a distance at most $2OPT_2$ from $S_1$ and $S_{22}$ contains riders with a distance strictly bigger than $2OPT_2$ from $S_1$, where the distance from a set is the minimum distance to any element of the set.  A rider in $S_{22}$ cannot share any driver with a rider from $S_1$ in the optimal solution of TSRMB, because otherwise, the distance between these riders will be at most $2OPT_2$ by using the triangle inequality. Therefore we can match $S_{22}$ to their mates in the optimal solution and add them to $M_a$, without using the optimal drivers of $S_1$. We pay at most $OPT_2$ for matching $S_{22}$.
    
    \item {\bf Step 3. }We still need to simultaneously match riders in $S_1$ and $S_{12}$ to finish the construction of $M_a$. Notice that some riders in $S_{12}$ might share their optimal drivers with riders in $S_1$. We can assume without loss of generality that all riders in $S_{12}$ share their optimal drivers with $S_1$ (otherwise we can match them to their optimal drivers without affecting $S_1$). Denote $S_{12} = \{r_1, \ldots, r_q\}$ and $S_{1} = \{s_1, \ldots, s_k\}$. For each $i\in [q]$ let's say $s_i \in S_1$ is the rider that shares its optimal driver with $r_i$. We show that $q \leq  |M|.$  In fact, every rider in $S_{12}$ shares its optimal driver with a different rider in $S_1$, and is  within a distance $2OPT_2$ from $S_1$. But since $S_{12}$ is not covered by the maximum cardinality matching $M$, this implies by the maximality of $M$ that there are $q$ other riders from $S_2^{Match}$ that are covered by $M$. Hence $q \leq |M|$. Finally, let $\{t_1, \ldots, t_q\} \subset S_2^{Match}$ be the mates of $ \{s_1, \ldots, s_q\}$ in $M$, i.e., $(s_i,t_i) \in M$ for all $i \in [q]$. Recall that $d(s_i,t_i)\leq 2OPT_2$ for all $i \in [q]$. In what follows, we describe how to match  $S_{12}$ and $S_1$:

    \begin{itemize}
    \item For $i \in [q]$, we match $r_i$ to its optimal driver and $s_i$ to the optimal driver of $t_i$. This is possible because the optimal driver of $t_i$ cannot be the same as the optimal driver of $r_i$ since both $r_i$ and $t_i$ are part of the same scenario $S_2$. Therefore, we pay a cost $OPT_2$ for the riders $r_i$ and a cost $3 OPT_2$ (follows from the triangle inequality) for the riders $s_i$ where $i \in [q]$.
    
    \item We still need to match $\{s_{q+1}, \ldots, s_k\}$. Consider a rider $s_j$ with $j \in \{q+1, \ldots, k\}$. If the optimal driver of $s_{j}$ is not shared with any $t_i \in \{t_1,\ldots, t_q\}$, then this optimal driver is still available and can be matched to $s_{j}$ with a cost at most $OPT_2$. If the optimal driver of $s_{j}$ is shared with some $t_i \in \{t_1, \ldots t_q\}$, then $s_{j}$ is also covered by $M$. Otherwise $M$ can be augmented by deleting $(s_i,t_i)$ and adding $(r_i, s_i)$ and $(s_{j}, t_i)$. Therefore $s_{j}$ is covered by $M$ and has a mate $\tilde{t}_{j} \in S_2^{Match} \setminus \{t_1, \ldots, t_q\}$. Furthermore, the driver assigned to $\tilde{t}_j$ is still available. We can then match $s_{j}$ to the optimal driver of $\tilde{t}_{j}$. Similarly if the optimal driver of some $s_{j'} \in \{s_{q+1}, \ldots, s_k\} \setminus \{s_{j}\}$ is shared with $\tilde{t}_{j}$, then $s_{j'}$ is covered by $M$. Otherwise $(r_i, s_i, t_i, s_{j}, \tilde{t}_{j}, s_{j'})$ is an augmenting path in $M$. Therefore $s_{j'}$ has a mate in $M$ and we can match $s_{j'}$ to the optimal driver of its mate. We keep extending these augmenting paths until all the riders in $\{s_{q+1}, \ldots, s_k\}$ are matched. Furthermore, the augmenting paths $(r_i, s_i, t_i, s_{j}, \tilde{t}_{j}, s_{j'} \ldots)$  starting from two different riders $r_i \in S_{12}$ are vertex disjoint. This ensures that every driver is used at most once. Again, by the triangle inequality, the edges that match $\{s_{q+1}, \ldots, s_k\}$ in our solution have weights less then $3 OPT_2$.
    \end{itemize}
\end{itemize}

Putting it all together, we have constructed a matching $M_a$ where the first-stage cost is exactly $OPT_1$ and the second-stage cost is at most $3 OPT_2$ since the edges used for matching $S_1 \cup S_2^{Unmatch}$ in $M_a$ have a weight at most $3OPT_2$. Therefore, the total cost of $M_a$ is at most $OPT_1 + 3OPT_2$.

\hfill \Halmos 
\endproof

\vspace{3mm}

\proof{\textit{Proof of Theorem \ref{2scenarios}}.}
Let $D_1$ be the set of drivers returned by Algorithm \ref{alg:2scenarios}. 
Lemma \ref{lemma:2scenarios} implies
\begin{equation}\label{eq:S1}
 cost_1(D_1,R_1)  + cost_2(D\setminus D_1, S_1)  \leq  \ OPT_1 + 3OPT_2   
\end{equation}
and 
\begin{equation*}
    cost_1(D_1,R_1) + cost_2(D\setminus D_1, S_2^{Unmatch})  \leq  \ OPT_1 + 3OPT_2.
\end{equation*}
We have $S_2=S_2^{Match} \cup S_2^{Unmatch} $. If the scenario $S_2$ is realized, we  use the drivers that were assigned to $S_1$ in the matching constructed in Lemma \ref{lemma:2scenarios} to match $S_2^{Match}$. This is possible with edges of weights at most $cost_2(D\setminus D_1, S_1) + 2OPT_2$ because by definition $S_2^{Match}$ are connected to $S_1$ within edges of weight at most $2 OPT_2$.  Therefore, 
$$ cost_2(D\setminus D_1, S_2) \leq \max\big\{cost_2(D\setminus D_1, S_2^{Unmatch}), \ cost_2(D\setminus D_1, S_1) + 2OPT_2\big \}
$$
and therefore
\begin{equation}
\label{eq:S2}
    cost_1(D_1,R_1) + cost_2(D\setminus D_1, S_2) \leq OPT_1 + 5OPT_2.
\end{equation}
From  \eqref{eq:S1} and \eqref{eq:S2}, we conclude that
\[  cost_1(D_1,R_1) + \max\limits_{S \in \{S_1,S_2\} } cost_2(D\setminus D_1, S) \leq OPT_1 + 5OPT_2. \] \hfill  \Halmos
\endproof
\subsection{$\bf p$ Explicit Scenarios}
We now consider the case of  explicit list of $p$ scenarios, i.e., $\mathcal{S} = \{ S_1, S_2, \ldots, S_p  \}$. Building upon the ideas from Algorithm \ref{alg:2scenarios}, we present  $O(p^{1.59})$-approximation to TSRMB with $p$ scenarios.  The idea of our algorithm is  to construct the  representative scenario recursively by processing  pairs of ``scenarios'' at each step. Hence, we need $O(\log_2 p)$ iterations to reduce the problem to an instance of a single scenario. At each iteration, we show that we only lose a multiplicative factor of $3$ so that the final approximation ratio is $O(3^{\log_2 p})=O(p^{1.59})$. We present details in Algorithm \ref{alg:pscenarios}. Theorem \ref{thm:pscenarios} states the theoretical guarantee.  Note that the approximation guarantee of our algorithm grows in a sub-quadratic manner with the number of scenarios $p$ and  it is  an interesting question if there exists an algorithm for TSRMB with an approximation guarantee that does not depend on the number of scenarios.

{\small \begin{algorithm} 
	\caption{$p$ explicit scenarios.} 
	\label{alg:pscenarios}
	\begin{algorithmic}[1]
	\REQUIRE{First-stage riders $R_1$,  scenarios $\{ S_1, S_2, \ldots, S_p  \}$, drivers $D$ and value of $OPT_2$.}
	\ENSURE{First-stage decision $D_1$.}
	\STATE{Initialize $\hat{S}_j:= S_j$ for $j = 1, \ldots, \ p$.}
	\FOR{$i = 1, \ldots, \ \log_2{p}$}
	    \FOR{$j = 1,2, \ldots, \ \frac{p}{2^{i}}$}
	    \STATE $ \sigma(j)=j+\frac{p}{2^{i}}$
	    \STATE{$M_{j}:=$ maximum cardinality matching between $\hat{S}_j$ and $\hat{S}_{\sigma(j)}$ with edges of weight at most $2\cdot 3^{i-1}\cdot OPT_2$.}
	    \STATE{$\hat{S}_{\sigma(j)}^{Match}:= \{ r \in \hat{S}_{\sigma(j)} \ | \ \exists \ s \in \hat{S}_j \ \mbox{ s.t }(s,r) \in M_{j} \}$.}
	    \STATE{$\hat{S}_{\sigma(j)}^{Unmatch}:=\hat{S}_{\sigma(j)} \setminus \hat{S}_{\sigma(j)}^{Match}$ }
	    \STATE{$\hat{S}_j = \hat{S_j} \cup \hat{S}_{\sigma(j)}^{Unmatch}$.}
	\ENDFOR
	\ENDFOR
	\RETURN $D_1:=$ TSRMB-1-Scenario($R_1, \hat{S_1}, D)$.
	\end{algorithmic} 
\end{algorithm}
}

\begin{theorem} \label{thm:pscenarios}
Algorithm \ref{alg:pscenarios} yields a solution with total cost of $O(p^{1.59})\cdot OPT$ for TSRMB with an explicit list of $p$ scenarios.
\end{theorem}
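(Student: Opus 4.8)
The plan is to follow the same two-step template as the proof of Theorem \ref{2scenarios}, but to drive it by induction on the $\log_2 p$ merge rounds of Algorithm \ref{alg:pscenarios}. As in the two-scenario case, I would first show that the single representative scenario $\hat S_1$ produced at the end is \emph{cheap}: there is a matching $M_a$ of $R_1\cup\hat S_1$ into $D$ obtained by sending $R_1$ to its optimal first-stage mates (cost $OPT_1$) and $\hat S_1$ into $D\setminus D_1^*$ with small bottleneck. By the exactness of Algorithm \ref{alg:singlescenario} (Lemma \ref{lemma:singlescenario}), the set $D_1$ returned by the last line of Algorithm \ref{alg:pscenarios} then satisfies $cost_1(D_1,R_1)+cost_2(D\setminus D_1,\hat S_1)\le OPT_1+3^{\log_2 p}OPT_2$. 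I would then argue that any \emph{original} scenario $S_j$ can be served cheaply once $\hat S_1$ is, by ``unrolling'' the merges and charging the displacement of each rider to the matching thresholds used along the way.

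For the first step I would prove the inductive invariant: \emph{for every round $i$ and every index $j$, the scenario $\hat S_j$ present after round $i$ admits a matching into $D\setminus D_1^*$ of bottleneck at most $3^{i}OPT_2$ that uses every driver at most once.} The base case $i=0$ is the optimal second-stage matching of $S_j$, which has bottleneck $\le OPT_2$ and uses only drivers in $D\setminus D_1^*$. For the inductive step, round $i$ merges $\hat S_j$ and $\hat S_{\sigma(j)}$ using the maximum matching $M_j$ whose edges have weight below $2\cdot 3^{i-1}OPT_2$; by hypothesis each of the two scenarios has a matching into $D\setminus D_1^*$ of bottleneck $\le 3^{i-1}OPT_2$, and each such matching uses every driver at most once. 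This is exactly the configuration of Lemma \ref{lemma:2scenarios} with $OPT_2$ replaced by $3^{i-1}OPT_2$, $S_1$ by $\hat S_j$, and $S_2$ by $\hat S_{\sigma(j)}$; the same augmenting-path construction then yields a single matching of $\hat S_j\cup\hat S_{\sigma(j)}^{Unmatch}$ into $D\setminus D_1^*$ with each driver used once and bottleneck at most $3\cdot 3^{i-1}OPT_2=3^{i}OPT_2$. After $L=\log_2 p$ rounds this produces a matching of $\hat S_1$ of bottleneck $3^{L}OPT_2=p^{\log_2 3}OPT_2$; prepending the first-stage assignment gives the matching $M_a$ and hence the first bound.

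For the second step I would track each original rider $r\in S_j$ through the rounds via a representative map $\phi(r)\in\hat S_1$: at each round $r$ (or its current representative) is either kept — when it lies in a ``left'' scenario or is an unmatched ``right'' rider — or it is a matched ``right'' rider, in which case it is replaced by its $M_j$-mate, a rider of the left scenario at distance below $2\cdot 3^{i-1}OPT_2$. Summing over all rounds via the triangle inequality gives $d(r,\phi(r))\le\sum_{i=1}^{L}2\cdot 3^{i-1}OPT_2=(3^{L}-1)OPT_2=(p^{\log_2 3}-1)OPT_2$. Since at every round the replacement is injective on the riders then present — a matching sends distinct right-riders to distinct left-mates, and fixes everything else — the composition $\phi$ is injective on $S_j$. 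Composing with the bottleneck matching $g$ of $\hat S_1$ into $D\setminus D_1$ (with $b:=cost_2(D\setminus D_1,\hat S_1)$) yields an injective assignment $r\mapsto g(\phi(r))$ serving $S_j$ within distance $b+(p^{\log_2 3}-1)OPT_2$. Hence $cost_2(D\setminus D_1,S_j)\le b+(p^{\log_2 3}-1)OPT_2$ for every $j$, and combining with $cost_1(D_1,R_1)+b\le OPT_1+p^{\log_2 3}OPT_2$ gives total cost at most $OPT_1+(2p^{\log_2 3}-1)OPT_2=O(p^{1.59})\cdot OPT$, as $\log_2 3<1.59$.

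The main obstacle is making the inductive step of the first part fully rigorous: one must verify that the augmenting-path argument of Lemma \ref{lemma:2scenarios} uses only the abstract properties ``each scenario is served by a matching of bottleneck $\beta$ using every driver at most once'' and ``matched pairs lie within $2\beta$,'' so that it can be invoked with $\beta=3^{i-1}OPT_2$ and with the level-$(i-1)$ matchings playing the role previously played by the optimal second-stage drivers. A secondary point is bookkeeping when $p$ is not a power of two — one can pad $\mathcal S$ with copies of an existing scenario so the round count is $\lceil\log_2 p\rceil$, which affects only constants — and checking that per-round injectivity composes to injectivity of $\phi$ on each $S_j$, which is precisely what guarantees that distinct riders of a realized scenario never contend for the same driver.
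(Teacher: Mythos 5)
Your proposal follows essentially the same route as the paper's proof: the same inductive invariant that after round $i$ each surviving $\hat S_j$ can be served from $D\setminus D_1^*$ with bottleneck $3^{i}\,OPT_2$ (obtained by re-invoking the construction of Lemma \ref{lemma:2scenarios} at scale $3^{i-1}OPT_2$), and the same unrolling of the merges connecting each original rider to a distinct representative in $\hat S_1$ via a path of length $\sum_{t}2\cdot 3^{t-1}OPT_2=(3^{\log_2 p}-1)OPT_2$, yielding the same $O(p^{\log_2 3})$ bound. The one spot where you are slightly too quick (as is the paper) is the injectivity of the representative map on a realized scenario $S$: the per-round replacement is \emph{not} injective on all riders present (a matched right-rider and its left mate both map to that mate), but collisions cannot arise among representatives of a single $S$ because they all travel together inside one current scenario and hence are never matched to one another by any $M_j$.
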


\proof{\textit{Proof.}
The algorithm reduces the number of considered ``scenarios'' by half in every iteration, until only one scenario remains. In iteration $i$, we have $\frac{p}{2^{i-1}}$ scenarios that we aggregate in $\frac{p}{2^{i}}$ pairs, namely $(\hat{S}_j$ , $\hat{S}_{\sigma(j)})$ for $j \in \{1, 2,  \ldots, \frac{p}{2^i} \}$. For each pair, we construct a single representative scenario which plays the role of the new $\hat{S}_j$ at the start of the next iteration $i+1$.

\begin{claim}\label{claim:iteration}
There exists a first-stage decision $D^*_1$, such that at every iteration $i \in \{1,\ldots, \log_2{p}\}$, we have for all $j \in \{1,2, \ldots ,\frac{p}{2^i} \}$:
\begin{enumerate}
    \item[(1)] $R_1$ can be matched to $D^*_1$ with a first-stage cost of $OPT_1$.
    \item[(2)] $\hat{S_j} \cup \hat{S}_{\sigma(j)}^{Unmatch}$ can be matched to $D \setminus D^*_1$ with a second-stage cost at most $3^{i} \cdot OPT_2$.
    \item[(3)] There exists a matching between $\hat{S}_{\sigma(j)}^{Match}$ and $\hat{S}_j$ with all edge weights at most $ 2\cdot 3^{i-1}\cdot OPT_2$.
\end{enumerate}
\end{claim}

\proof{\textit{Proof of Claim \ref{claim:iteration}}.}
Statement (3) follows from the definition of  $\hat{S}_{\sigma(j)}^{Match}$ in Algorithm \ref{alg:pscenarios}. Let's show (1) and (2) by induction over $i$. 
\begin{itemize}
    \item \textbf{Initialization:} for $i=1$, let's take any two scenarios $\hat{S}_j={S}_j$ and $\hat{S}_{\sigma(j)}={S}_{\sigma(j)} $. We know  that these two scenarios can be matched to drivers of the optimal solution in the original problem with a cost at most $OPT_2$. In the proof of Lemma \ref{lemma:2scenarios}, we show that if we use the optimal first-stage decision $D_1^*$ of the original problem, then we can match $\hat{S}_j$ and $\hat{S}_{\sigma(j)}^{Unmatch}$ simultaneously to $D\setminus D^*_1$ with a cost at most $3OPT_2.$ 
    \item \textbf{Maintenance.} Assume the claim is true for all values less than  $i \leq \log_2{p} - 1$. We show it is true for $i+1$. Since the claim is true for iteration $i$, we know that at the start of iteration $i+1$, for $j \in \{1, \ldots, \frac{p}{2^i} \}$, $\hat{S}_j$ can be matched to $D\setminus D^*_1$ with a cost at most $3^i \cdot OPT_2.$ We can therefore consider a new TSRMB problem with $\frac{p}{2^i}$ scenarios, where using $D_1^*$ as a first-stage decision ensures a second-stage optimal value at most $\widehat{OPT}_2 = 3^i \cdot OPT_2$. By the proof of Lemma \ref{lemma:2scenarios}, and by using $D_1^*$ as a first-stage decision in this problem, we ensure that for $j \in \{1, \ldots, \frac{p}{2^{i+1}}\}$, $\hat{S}_j$ and $\hat{S}^{Unmatch}_{\sigma(j)}$ can be simultaneously matched to $D\setminus D_1^*$ with a cost at most $3\widehat{OPT}_2 = 3^{i+1}\cdot OPT_2$. 
    \hfill \Halmos
\end{itemize}
\endproof

From Claim \ref{claim:iteration}, we have in  the last iteration $i = \log_2{p}$, \begin{itemize}
    \item $R_1$ can be matched to $D_1^*$ with a first-stage cost of $OPT_1$.
    \item $\hat{S}_1$ can be matched to $D \setminus D^*_1$ with a second-stage cost at most $3^{\log_2{p}}\cdot OPT_2$.
\end{itemize}
Computing the single scenario solution for $\hat{S}_1$ will therefore yield a first-stage decision $D_1$ that gives a total cost of at most $OPT_1 + 3^{\log_2{p}}\cdot OPT_2$ when the second stage is evaluated on the scenario $\hat{S}_1$. We now bound the cost of $D_1$ on the original scenarios $\{S_1, \ldots, \ S_p\}$. Consider a scenario $S \in \{S_1, \ldots, \ S_p\}$. The riders in $S \cap  \hat{S}_1$ can be matched to some drivers in $D\setminus D_1$ with a cost at most $OPT_1 + 3^{\log_2{p}}\cdot OPT_2$. As for other riders of $S \setminus \hat{S}_1$, they are not part of $\hat{S}_1$ because they have been matched and deleted at some  iteration $i < \log_2 p$. Consider riders $r$ in $S \setminus \hat{S}_1$ that were matched and deleted from a representative scenario at some iteration, then by statement (3) in Claim \ref{claim:iteration}, each  $r$ can be connected to a different rider in $\hat{S}_1 \setminus (\hat{S}_1 \cap S )$ within a path of  length at most 
$$\sum\limits_{t = 1}^{\log_2{p}} 2\cdot 3^{t-1} \cdot OPT_2= (3^{\log_2 p} -1)\cdot OPT_2.$$
 We know that $R_1$ and $\hat{S}_1$  can be matched respectively to $D_1$ and $D \setminus D_1$  with a total cost at most $OPT_1 + 3^{\log_2{p}}\cdot OPT_2$.
 Therefore, we can match $R_1$ and $S$  respectively to $D_1$ and $D \setminus D_1$ with a total cost at most
$$OPT_1 + 3^{\log_2{p}}\cdot OPT_2 + (3^{\log_2 p} -1)\cdot OPT_2 = O(3^{\log_2{p}})\cdot OPT = O(p^{\ln{3}/\ln{2}})\cdot OPT=  O(p^{1.59})\cdot OPT  .$$
Therefore, the worst-case total cost of the solution returned by Algorithm \ref{alg:pscenarios} is $O(p^{1.59})\cdot OPT$. \Halmos
\endproof

{\color{black}
\vspace{2mm}
\noindent
{\bf Remark.} We also observe that the structure of TSRMB with explicit scenarios reveals a connection to the chromatic $k$-supplier problem, which was recently posed as an open problem by~\cite{goyal2023tight}. We explore this connection in Appendix~\ref{apx:tamim}, where we show how to reduce chromatic $k$-supplier to an instance of TSRMB with $p$ scenarios.
}

\section{\red{General Model of Uncertainty Without Surplus}\label{sec:implicit}}
\red{In this section, we consider TSRMB under a general model of uncertainty. The uncertainty model may be implicit or explicit, and we denote the uncertainty set by $\mathcal{S}$. Our analysis in subsequent sections depends on the balance between supply (drivers) and demand (riders). Specifically, we introduce the notion of surplus $\ell$, defined as the excess in the number of available drivers for matching both first-stage riders and a second-stage scenario with the maximum size, i.e., 
$$ \ell = |D| - |R_1| - k, $$
where 
$$ k = \max \{ |S| : S \in \mathcal{S} \}. $$
Here, $k$ represents the maximum size of any scenario in the uncertainty set. If the surplus $\ell < 0$, TSRMB becomes infeasible, as there would not be enough drivers to satisfy the maximum possible demand in the second stage. Therefore, we assume $\ell \geq 0$. In this section, we analyze the case of zero surplus ($\ell=0$), meaning the supply is equal to the maximum demand. The case of general surplus is addressed in the following section.}

{\color{black}For the zero surplus case ($\ell=0$), we demonstrate a tight $3$-approximation for TSRMB (Theorem~\ref{thm:implicitnosuplusproof}). Specifically, we propose a simple algorithm that achieves a $3$-approximation for TSRMB under any model of uncertainty when there is zero surplus. Furthermore, this approximation is tight. In fact, in Theorem \ref{thm:hardness_robust}, we showed that TSRMB with an explicit model of uncertainty using three scenarios is NP-hard to approximate within a factor better than $3-\epsilon$ for any $\epsilon>0$. The instance used to prove this hardness result (see the proof of the second part of Theorem \ref{thm:hardness_robust} in Appendix \ref{appendix:hardness}) has a surplus equal to $0$, (we have three scenarios of equal size $k$ and $|D| = |R_1| + k$). Thus, TSRMB is NP-hard to approximate within a factor better than $3-\epsilon$ even when the surplus is $0$.  Therefore, our approximation guarantee is tight and effectively closes the gap for this case.}

\begin{theorem}\label{thm:implicitnosuplusproof}
    Algorithm \ref{robustnosurplusalgo} yields a solution with total cost at most $OPT_1 + 3OPT_2$ for TSRMB with a general model of uncertainty and no surplus. {\color{black}Furthermore, there exists no $(3-\epsilon)$-approximation algorithm for TSRMB with no surplus for any fixed $\epsilon > 0$, unless $P = NP$.}
\end{theorem}

\vspace{-3mm}
{\small \begin{algorithm}
    \caption{General Uncertainty with No Surplus.}
    \label{robustnosurplusalgo}
    \begin{algorithmic}[1]
    \REQUIRE{First-stage riders $R_1$, uncertainty set $\mathcal{S}$ for second-stage riders, drivers $D$.}
    \ENSURE{First-stage decision $D_1$.}
    \STATE{$S_1 :=$ a second-stage scenario of maximum size.}
    \STATE{$D_1 :=$ TSRMB-1-Scenario$(R_1, S_1, D)$.}
    \RETURN $D_1$.
    \end{algorithmic} 
\end{algorithm}}
The proposed algorithm for the $3$-approximation operates as follows. We solve a single scenario TSRMB for a scenario $S_1 \in \mathcal{S}$ with the maximum size $k$. The scenario $S_1$ is chosen arbitrarily among scenarios with maximum size. Then, we simply use the first-stage drivers from the obtained solution as our first first-stage drivers for our TSRMB problem. {\color{black}Note that, while in general solving TSRMB requires knowledge of the entire uncertainty set, in the no-surplus case we only need to know a single scenario with zero surplus.}
The complete algorithm for the no-surplus case is presented in Algorithm \ref{robustnosurplusalgo}, along with its worst-case guarantee in Theorem \ref{thm:implicitnosuplusproof}. The intuition behind the proof of Theorem \ref{thm:implicitnosuplusproof} is as follows. Given the absence of surplus, there exists a set of drivers of size $k$, where all the second-stage scenarios are matched to this set or its subsets in the optimal solution. Leveraging this, we establish that for any two scenarios from the uncertainty set, we can perfectly match the scenario of the smaller size with the other scenario, using only edges with weight at most $2 OPT_2$. Using this observation, we can bound the cost for other scenarios within through the {\color{black}triangle inequality}, thereby showing a $3$-approximation. The complete proof is presented below.} 


\proof{\textit{Proof of Theorem \ref{thm:implicitnosuplusproof}}.}
{\color{black}Fix any optimal solution $D^*_1$, and let $OPT_1$ and $OPT_2$ be its first-stage and second-stage costs, respectively.}
Let $f(D_1)$ be the total cost of the solution returned by Algorithm~\ref{robustnosurplusalgo}. We claim that $f(D_1) \leq OPT_1 + 3 OPT_2$. Let $S_1 \in {\cal S}$ be a second-stage scenario of maximum size $k$.
Since $D_1$ is the optimal solution  of TSRMB problem with the single scenario $S_1$, this implies that
\begin{equation} \label{eq:martil}
    cost_1(D_1,R_1) + cost_2(D\setminus D_1, S_1) \leq OPT_1 + OPT_2.
\end{equation}
 Since the surplus is zero, we have  $|D| = |R_1| + k$. 
 Let $D_2^*=D \setminus D_1^*$. 
 In particular, $D_2^*$ is the set of drivers used in an optimal solution to match any second-stage scenario of riders. We have $|D_2^*|=k$. Hence, there exists a perfect matching between $D_2^*$ and $S_1$ with bottleneck cost at most $OPT_2$, i.e., $cost_2(D_2^*, S_1) \leq OPT_2$. 
 Consider another scenario $S \in \mathcal{S}$.
We have $cost_2(D_2^*, S) \leq OPT_2$ and by definition of $S_1$,  $|S| \leq |S_1|=k$. So, there exists a matching between $S$ and $D_2^*$ with  size equals to $|S|$ and a bottleneck cost at most $OPT_2$. Note that the drivers from $D_2^*$ used in this matching are also matched to a subset of $S_1$. Therefore, 
by applying the triangle inequality, there exists a matching between $S$ and $S_1$, of size equal to $|S|$ and a bottleneck cost at most $2OPT_2$. Consequently, for any scenario $S \in \cal S$,
\begin{align*}
    cost_1(D_1,R_1) + cost_2(D\setminus D_1, S) &\leq cost_1(D_1,R_1) + cost_2(D\setminus D_1, S_1) + cost_2( S_1, S)\\ &\leq OPT_1+ OPT_2 + 2OPT_2 = OPT_1 + 3OPT_2,
\end{align*} 
where the first inequality follows from the {\color{black}triangle inequality}, the second one
follows from \eqref{eq:martil} and from the fact that we establish a matching between $S$ and $S_1$, of size $|S|$ and a bottleneck cost at most $2OPT_2$.  
{\color{black} Finally, the hardness of approximation result follows directly from the proof of the second part of Theorem \ref{thm:hardness_robust}, which is based on an instance with zero surplus..}
\hfill
\Halmos
\endproof

\vspace{3mm}
\noindent
{\bf Remark.}
    We note that when the surplus is strictly greater than 0, Algorithm \ref{robustnosurplusalgo} no longer yields a constant approximation and its worst-case performance can be as bad as $\Omega(m)$. In particular, consider the simple example
 in Figure \ref{fig:example_surplus}, with a budget of uncertainty set where  we have two second-stage riders and $k=1$. {\color{black} We have $m$ (even) first-stage riders and $m+2$ drivers. The first-stage riders are indexed from $0$ to $m-1$, where index $0$ refers to the top rider and index $m-1$ to the bottom rider. Riders with even indices have one upward edge of distance $1-\epsilon$ and one downward edge of distance $1$, while riders with odd indices have one upward edge of distance $1$ and one downward edge of distance $1-\epsilon$. The single-scenario solution for $S_1$ results in all first-stage riders being matched via downward edges. Hence, if $S_2$ is realized, the cost of matching $S_2$ to the closest available driver is $\Omega(m)$}. By symmetry, solving the single scenario problem for $S_2$ yields a $\Omega(m)$ bottleneck cost for $S_1$. This instance has a surplus $\ell=1$.
\begin{figure}[h]
    \centering
    \includegraphics[scale=0.25]{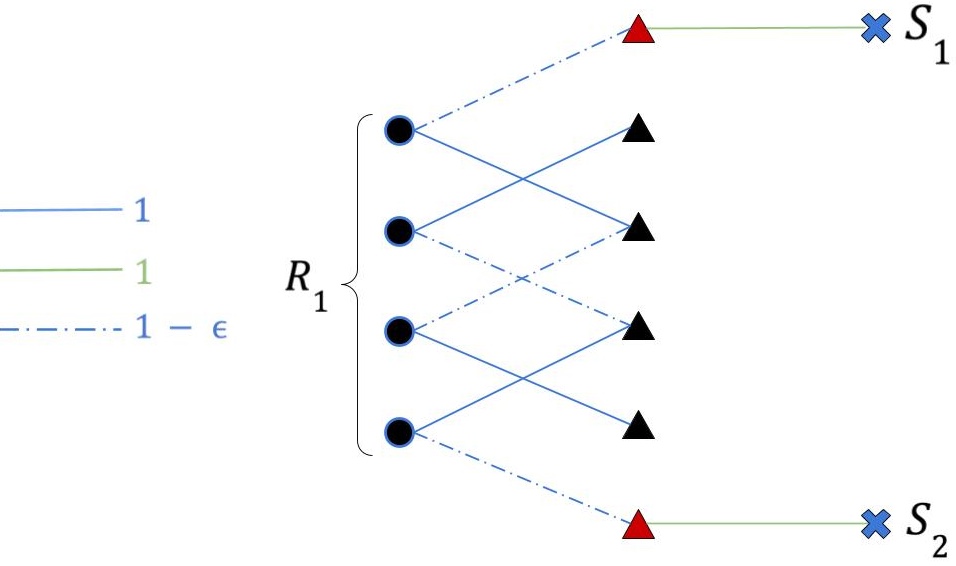}
    \caption{First-stage riders are depicted as black dots and
drivers as triangles. The two second-stage riders are depicted as blue crosses{\color{black}, and their respective optimal drivers as red triangles}. Second-stage optimum are depicted
as solid green edges. $\mathcal{S} = \{S_1, S_2\}$, $k=1$ and $\ell=1$.}
    \label{fig:example_surplus}
\end{figure}

\section{Implicit Scenarios: Budget of Uncertainty} \label{sec:general-surplus}

In this section, we consider an implicit model of uncertainty given by the budget of uncertainty set \eqref{eq:budget}. Recall the budget of uncertainty set $\cal S$ is given by $ \mathcal{S} = \{ S \subseteq R_2 \mid |S| \leq k \}.$ Here $k$ is the maximum size of a scenario and the surplus $\ell$ is given by $\ell= |D|-|R_1|-k$. As observed in the example of Figure \ref{fig:example_surplus}, the TSRMB problem becomes challenging  even with a unit surplus of drivers and  Algorithm \ref{robustnosurplusalgo} could be arbitrarily bad. Motivated by this, we focus on the case of a small surplus $\ell$ which we study in Section \ref{subsection:mathilde}. Then, in Section \ref{subsec:last}, we consider arbitrary surplus with $k=1$.

\subsection{Small Surplus} \label{subsection:mathilde}

We assume  that $\ell < k $, i.e., the excess in the total available drivers is smaller than the size  of any scenario. We present a constant approximation algorithm in this regime  for the implicit model of uncertainty where the size of scenarios is relatively small with respect to the size of the universe  ($k = O (\sqrt{n})$). This technical assumption is needed for our analysis but it is not too restrictive and still captures the regime where the number of scenarios can be exponential.
Our algorithm attempts to cluster the second-stage riders in different groups (a \textit{ball} and a set of \textit{outliers}) in order to reduce the number of possible worst-case configurations. We then solve a sequence of instances with representative riders from each group. In what follows, we present our construction for these groups of riders.

\vspace{3mm}
\noindent
{\bf Our construction}. First, we show that many of the riders are contained in a ball with radius $3 OPT_2$. The center of this ball $\delta$ can be found by enumerating over all drivers and selecting the one with the least maximum distance to its closest $k$ second-stage riders, i.e.,
\begin{equation} \label{eq:7ankor}
    \delta = \argmin\limits_{\delta' \in D} \max\limits_{r \in R_k(\delta')} d(\delta', r),
\end{equation}
where $R_k(\delta')$ is the set of the $k$ closest second-stage riders to $\delta'$. Formally, we have the following lemma for which we defer the proof to Appendix \ref{appendix:implicit}.

\begin{restatable}{lemma}{centerlemma}
\label{lemma:center}
Suppose $k \leq \sqrt{\frac{n}{2}}$ and $\ell < k$ and let $\delta$ be the driver given by \eqref{eq:7ankor}.
Then, the ball $\mathcal{B}$ centered at $\delta$ with radius $3OPT_2$ contains at least $n-\ell$ second-stage riders. Moreover, the distance between any of these riders and any rider in $R_k(\delta)$ is at most $4OPT_2$.  
\end{restatable}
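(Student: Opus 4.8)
The plan is to work throughout with the optimal first-stage solution $D_1^*$ and the set of available drivers $A := D \setminus D_1^*$, which has size $|A| = |D| - |R_1| = k + \ell$. The engine of the whole argument is the reformulation of $OPT_2$: since $OPT_2 = \max_{S} cost_2(D\setminus D_1^*, S)$, \emph{every} scenario $S$ (every $k$-subset of $R_2$) admits a matching into $A$ in which each edge has length at most $OPT_2$. Two consequences follow. Applied to any single rider (viewed inside some $k$-scenario containing it, which exists because $n \ge k$), this shows every rider of $R_2$ lies within $OPT_2$ of some driver of $A$. More importantly, matchability must hold for the specific ``hard'' scenarios I will construct, so exhibiting a $k$-scenario that \emph{cannot} be matched within $OPT_2$ produces a contradiction.

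First I would bound the radius $\rho := \max_{r \in R_k(\delta)} d(\delta, r)$ achieved by the minimizer $\delta$ of \eqref{eq:7ankor}. Assign each of the $n$ riders to one available driver within $OPT_2$, and apply pigeonhole over the $k+\ell$ drivers of $A$: some $a \in A$ collects at least $\lceil n/(k+\ell)\rceil$ riders within distance $OPT_2$. Here both hypotheses enter. Since $\ell < k$ we have $k + \ell \le 2k-1$, and since $k \le \sqrt{n/2}$ we have $n \ge 2k^2$, so $n/(k+\ell) \ge 2k^2/(2k-1) > k$. Thus $a$ has at least $k$ riders within $OPT_2$, so its $k$-th nearest rider is within $OPT_2$, and by the minimality of $\delta$ over all of $D$ this forces $\rho \le OPT_2$.

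The heart of the proof is then a driver-counting argument. Consider the scenario $R_k(\delta)$ itself: it is matched within $OPT_2$ to $k$ distinct drivers, which I call $A_{\mathrm{close}} \subseteq A$. Each such driver lies within $OPT_2 + \rho \le 2OPT_2$ of $\delta$ by the triangle inequality, so any rider $r'$ with $d(r',\delta) > 3OPT_2$ satisfies $d(r', a) > OPT_2$ for every $a \in A_{\mathrm{close}}$; that is, a rider outside the ball $\mathcal{B}$ of radius $3OPT_2$ can only be served within $OPT_2$ by one of the $|A \setminus A_{\mathrm{close}}| = \ell$ leftover drivers. If $\mathcal{B}$ missed at least $\ell+1$ riders, I would assemble a $k$-scenario containing $\ell+1$ of these exterior riders (feasible since $\ell+1 \le k$ and $n \ge k$); matching it within $OPT_2$ would demand $\ell+1$ distinct drivers from a pool of only $\ell$, contradicting the matchability guaranteed by $OPT_2$. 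Hence $\mathcal{B}$ contains at least $n-\ell$ riders, giving the first claim. The second claim is then immediate: for $r \in \mathcal{B}$ and $r'' \in R_k(\delta)$ the triangle inequality gives $d(r,r'') \le d(r,\delta) + d(\delta, r'') \le 3OPT_2 + \rho \le 4OPT_2$.

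I expect the main obstacle to be the $\rho \le OPT_2$ step, i.e.\ certifying that the near-optimal center $\delta$ genuinely sits at the center of a tight cluster of $k$ riders. This is precisely where both technical hypotheses $\ell < k$ and $k \le \sqrt{n/2}$ are consumed, and it is what lets the subsequent counting argument cleanly separate the $k$ ``cluster'' drivers $A_{\mathrm{close}}$ from the $\ell$ ``spare'' drivers $A \setminus A_{\mathrm{close}}$. Once that separation is secured, everything else is triangle-inequality bookkeeping together with the observation that an over-stuffed scenario violates the $OPT_2$ matchability.
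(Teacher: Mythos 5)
Your proof is correct and follows essentially the same route as the paper's: a pigeonhole argument over the $k+\ell$ second-stage drivers to certify $\max_{r\in R_k(\delta)} d(\delta,r)\le OPT_2$, followed by the observation that a rider at distance more than $3OPT_2$ from $\delta$ cannot be served within $OPT_2$ by any of the $k$ drivers matched to $R_k(\delta)$, so a scenario containing $\ell+1$ such riders would need $\ell+1$ distinct drivers from the remaining pool of $\ell$ --- exactly the paper's contradiction. Your direct count of riders outside the ball is a slightly cleaner packaging of the paper's equivalent claim that the union $R(\delta)\cup\bigcup_j R^*(\delta_{i_j})$ has size at least $n-\ell$ and lies in the ball, but the underlying argument is the same.
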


\proof{\textit{Proof.}
Let $\delta$ be the driver given by \eqref{eq:7ankor}. We claim that the $k$ closest riders to $\delta$ are all within a distance at most $OPT_2$ from $\delta$. Consider $D_2^*$ to be the $k+\ell$ drivers left for the second stage in the optimal solution. Every driver in $D_2^*$ can be matched to a set of different second-stage riders over different scenarios. Let us rank the drivers in $D_2^*$ according to how many different second-stage riders they are matched to over all scenarios, in descending order. Formally, let $D_2^* = \{\delta_1, \delta_2, \ldots, \delta_{k+\ell}\}$ and let $R^*(\delta_i)$ be the second-stage riders that are matched to $\delta_i$ in the optimal solution in some scenario. \red{ Let us assume} $$|R^*(\delta_1)| \geq \ldots \geq |R^*(\delta_{k+\ell})|.$$ We claim that $|R^*(\delta_1)| \geq k$. In fact, we have $\sum\limits_{i=1}^{k+\ell} |R^*(\delta_i)| \geq n$ because every second-stage rider is matched to at least one driver in some scenario. Therefore 
$$|R^*(\delta_1)| \geq \frac{n}{k+\ell} \geq \frac{n}{2k} \geq k.$$ We know that all the second-stage riders in $R^*(\delta_1)$ are within a distance at most $OPT_2$ from $\delta_1$. Therefore $\max\limits_{r\in R_k(\delta_1)} d(\delta_1,r) \leq OPT_2$.
But we know that by definition of $\delta$,
\begin{equation*}
    \max\limits_{r\in R_k(\delta)} d(\delta,r) \leq \max\limits_{r\in R_k(\delta_1)} d(\delta_1,r) \leq OPT_2.
\end{equation*}

This proves that the $k$ closest second-stage riders to $\delta$ are within a distance at most $OPT_2$. Let $R(\delta)$ be the set of all second-stage riders that are within a distance at most $OPT_2$ from $\delta$. Recall that $R_k(\delta)$ is the set of the $k$ closest second-stage riders to $\delta$. In the optimal solution, the scenario $R_k(\delta)$ is matched to a set of at least \red{ $k-1$ other drivers} $\{\delta_{i_1}, \ldots \delta_{i_{k-1}}\} \subset D_2^* \setminus \{\delta\}$. We show a lower bound on the size of $R(\delta)$ and the number of riders matched to $ \{\delta_{i_1}, \ldots \delta_{i_{k-1}}\}$ over all scenarios in the optimal solution.

\begin{claim}\label{claim:size}
$\big|R(\delta) \red{\cup} \bigcup\limits_{j=1}^{k-1} R^*(\delta_{i_j})\big| \geq n-\ell$.
\end{claim}
\proof{\textit{Proof of Claim \ref{claim:size}}.}
Suppose the opposite, suppose that at least $\ell+1$ riders from $R_2$ are not in the union. Let $F$ be the set of these $\ell+1$ riders. Since $\ell+1 \leq k$, we can construct a scenario $S$ that includes $F$. In the optimal solution, and in particular, in the second-stage matching of $S$, at least one rider from $F$  needs to be matched to a driver from $\{\delta, \delta_{i_1}, \ldots \delta_{i_{k-1}}\}$. Otherwise there are only $\ell$ second-stage drivers left to match all of $F$.  Therefore there exists $r \in F$ such that either $r \in R(\delta)$ or there exists $j \in \{1,\ldots,k-1\}$ such that $r \in R^*(\delta_{i_j})$. This shows that $r \in R(\delta) \red{\cup} \bigcup\limits_{j=1}^{k-1} R^*(\delta_{i_j})$, which is a contradiction. Therefore, at most $\ell$ second-stage riders are not in the union. 
\hfill \Halmos
\endproof

\begin{claim}\label{claim:center}
For any rider  $r \in R(\delta) \red{\cup} \bigcup\limits_{j=1}^{k-1} R^*(\delta_{i_j})$, we have $
    d(r,\delta) \leq 3 OPT_2$.
\end{claim}

\proof{\textit{Proof of Claim \ref{claim:center}}.}
If $r \in R(\delta) $ then by definition we have $d(r,\delta) \leq OPT_2$. Now suppose $r \in R^*(\delta_{i_j})$ for $j \in [k-1]$. Let $r'$  be the rider from scenario $R_k(\delta)$ that was matched to $\delta_{i_j}$ in the optimal solution.
\begin{equation*}
    d(r, \delta) \leq d(r,\delta_{i_j}) + d(\delta_{i_j}, r') + d(r', \delta) \leq 3OPT_2. \hfill \Halmos
\end{equation*}
\endproof

From Claim \ref{claim:center}, we see that the ball centered at $\delta$, with radius $3OPT_2$, contains  at least $n-\ell$ second-stage riders in $R(\delta) \red{\cup} \bigcup\limits_{j=1}^{k-1} R^*(\delta_{i_j})$. This proves the first part of the lemma. The second part is proved in the next claim.

\begin{claim} \label{claim:tamim}
For  $r_1 \in R_k(\delta)$ and $r_2 \in   R_k(\delta) \red{\cup} \bigcup\limits_{j=1}^{k-1} R^*(\delta_{i_j})$, we have $d(r_1, r_2) \leq 4 OPT_2$.
\end{claim}

\proof{\textit{Proof of Claim \ref{claim:tamim}}.}
Let $r_1 \in R_k(\delta)$. If $r_2 \in R_k(\delta) $ then $d(r_1,r_2) \leq d(r_1, \delta) + d(\delta,r_2) \leq 2 OPT_2$.
If  $r_2 \in R^*(\delta_{i_j})$ for some $j$, and $r'$ is the rider from scenario $R_k(\delta)$ that was matched to $\delta_{i_j}$ 
\begin{equation*}
    d(r_1,r_2) \leq d(r_1, \delta) + d(\delta, r') + d(r', \delta_{i_j}) + d(\delta_{i_j},r_2) \leq 4OPT_2. \hfill \Halmos
\end{equation*}

\endproof
\noindent Combining the two last claims concludes the proof of Lemma \ref{lemma:center}.
\hfill \Halmos
\endproof

\vspace{3mm}

Now, let us focus on the rest of second-stage riders. We introduce the following definition. We say that a rider $r \in R_2$ is an \textit{outlier} if $ d(\delta, r) > 3 OPT_2 $. Denote $\{o_1, o_2, \ldots, o_{\ell} \}$  the farthest $\ell$ riders from $\delta$ with $d(\delta, o_1) \geq d(\delta, o_2)  \geq \ldots \geq d(\delta, o_{\ell})$. Note that by Lemma \ref{lemma:center}, the $n-\ell$ riders in $\mathcal{B}$ are not outliers and the only potential outliers could be in $ \{ o_1, o_2, \ldots, o_{\ell} \}$.  Let $j^*$ be the threshold such that $o_1, o_2, \ldots, o_{j^*}$ are outliers and $o_{j^*+1}, \ldots, o_{\ell}$ are not,
with the convention that ${j^*}=0$ if there is no outlier. There are $\ell+1$ possible values for $j^*$. We call each of these possibilities a \textit{configuration}. For $j = 0, \ldots, \ell$, let $C_j$ be the configuration corresponding to threshold candidate $j$. Note that $C_0$ is the configuration where there is no outlier and  $C_{j^*}$ is the correct configuration (See Figure \ref{fig:threshold}).

\begin{figure}[!h]
    \centering
    \includegraphics[scale=0.4]{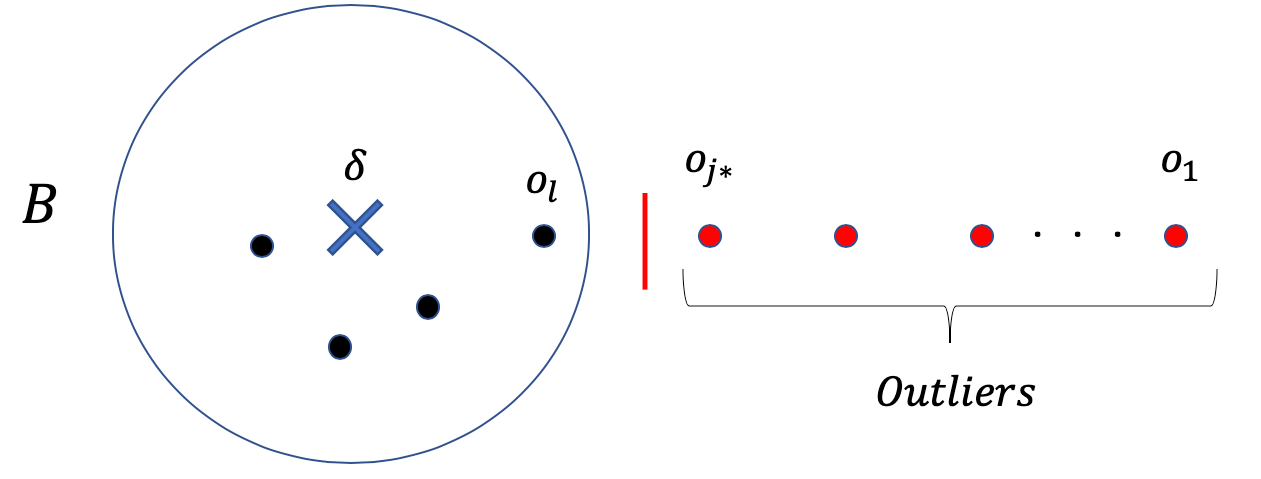}
    \caption{Configuration $C_{j^*}$}
    \label{fig:threshold}
\end{figure}

Now, we are ready to describe our algorithm. Recall that $R_k(\delta)$ are the closest $k$ second-stage riders to $\delta$. For the sake of simplicity, we denote $S_1=R_k(\delta)$ and $S_2=\{o_1 \ldots o_{\ell}\}$. Note that $S_2$ is a feasible scenario since $\ell <k$. For every configuration $C_j$, we form a representative scenario using $S_1$ and  $\{o_1 \ldots o_j\}$. We solve  TSRMB with this single representative scenario $S_1 \cup \{o_1 \ldots o_j\}$ and denote $D_1(j)$ the corresponding optimal solution, i.e.,
\begin{equation*}\label{c_j:configuration}
    D_1(j) = \mbox{TSRMB-1-Scenario}(R_1, S_1 \cup \{o_1 \ldots o_j\}, D). 
\end{equation*}

Since we cannot evaluate the cost of $D_1(j)$ on all scenarios because we can have exponentially many, we evaluate the cost of $D_1(j)$ on the two proxy scenarios $S_1$ and  $S_2$. We finally show that the candidate $D_1(j)$  with minimum cost over these two scenarios, gives a constant approximation to our original problem (See Theorem \ref{thm:exponential}). The details of our algorithm are summarized below.

\begin{algorithm}
	\caption{Implicit scenarios with small surplus and $k \leq \sqrt{\frac{n}{2}}$.} 
	\label{smallsurplus}
	\begin{algorithmic}[1]
	\REQUIRE{First-stage riders $R_1$, second-stage riders $R_2$, size $k$ and drivers $D$.}
	\ENSURE{First-stage decision $D_1$.}
	\STATE{Set $\delta:=$ driver given by \eqref{eq:7ankor}.}
	\STATE{Set $S_1$:= the closest $k$ second-stage riders to $\delta$.}
	\STATE{Set $S_2:= \{o_1, \ldots, o_{\ell}\}$} the farthest $\ell$ second-stage riders from $\delta$ ($o_1$ being the farthest).
	\FOR{ $j = 0, \ldots, \ell$}
	    \STATE{$D_1(j):=\mbox{TSRMB-1-Scenario}(R_1, S_1 \cup \{o_1 \ldots o_j\}, D)$.}
	\ENDFOR
	\RETURN $D_1 = \argmin\limits_{D_1(j): \ j \in \{0,\ldots, \ell\}} cost_1\big(D_1(j),R_1\big) + \max\limits_{S \in \{S_1,S_2\}} cost_2\big(D\setminus D_1(j), S\big)$.
	\end{algorithmic} 
\end{algorithm}

\begin{theorem}\label{thm:exponential}
Algorithm \ref{smallsurplus} yields a solution with total cost at most $3OPT_1 + 17 OPT_2$ for TSRMB with implicit scenarios when $k \leq \sqrt{\frac{n}{2}}$ and $\ell < k$.
\end{theorem}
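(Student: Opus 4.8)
The plan is to separate the argument into two independent pieces: an \emph{upper bound} showing that the candidate $D_1(j^*)$ built for the correct configuration $j^*$ (the true number of outliers, which is one of the values $0,\dots,\ell$ that Algorithm~\ref{smallsurplus} tries) has small cost on the two proxy scenarios $S_1,S_2$; and a \emph{reduction} showing that for \emph{any} first-stage decision the worst case over the exponentially many scenarios is captured, up to an additive $O(OPT_2)$, by the cost on just $S_1$ and $S_2$. Since the algorithm returns the configuration of minimum proxy cost, combining the two pieces yields the bound.

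For the first piece I would fix $j^*$, so the representative scenario is $S_1\cup\{o_1,\dots,o_{j^*}\}$ with $\{o_1,\dots,o_{j^*}\}$ being exactly the set of outliers. Starting from an optimal solution $(D_1^*,D_2^*)$, I match $R_1$ to $D_1^*$ at cost $OPT_1$ and then match $S_1$ and the outliers \emph{simultaneously} into $D_2^*$ with bottleneck $\le OPT_2$. The key is a disjointness claim: by the proof of Lemma~\ref{lemma:center} every rider of $S_1=R_k(\delta)$ is within $OPT_2$ of $\delta$, so its optimal driver lies within $2OPT_2$ of $\delta$, whereas every outlier is at distance $>3OPT_2$ from $\delta$, so its optimal driver lies at distance $>2OPT_2$ from $\delta$; hence the $k$ drivers serving $S_1$ and the $j^*$ drivers serving the outliers are disjoint and usable at once (each scenario has size $\le k$ since $j^*\le\ell<k$). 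As TSRMB-1-Scenario is exact (Lemma~\ref{lemma:singlescenario}), $D_1(j^*)$ satisfies $cost_1(D_1(j^*),R_1)+cost_2(D\setminus D_1(j^*),S_1\cup\{o_1,\dots,o_{j^*}\})\le OPT_1+OPT_2$. To control the proxy cost on $S_2=\{o_1,\dots,o_\ell\}$, the outliers of $S_2$ reuse the drivers just found, while each non-outlier $o_i$ ($i>j^*$) lies within $3OPT_2$ of $\delta$, hence within $4OPT_2$ of every rider of $S_1$, and so can be routed to one of the $k$ drivers serving $S_1$ at an extra additive $4OPT_2$ (there are $\ell-j^*<k$ such riders). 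This gives a proxy cost $P(j^*):=cost_1(D_1(j^*),R_1)+\max\{cost_2(D\setminus D_1(j^*),S_1),\,cost_2(D\setminus D_1(j^*),S_2)\}\le OPT_1+5OPT_2$.

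For the second piece I would prove that for an arbitrary $D_1$ and \emph{any} scenario $S$ of size $k$, $cost_2(D\setminus D_1,S)\le 4OPT_2+\max\{cost_2(D\setminus D_1,S_1),\,cost_2(D\setminus D_1,S_2)\}$. Writing $b_1,b_2$ for the two proxy bottlenecks and $A,B$ (with $|A|=k$, $|B|=\ell$) for the driver sets that realize them, I split $S$ into its outliers (a subset of $\{o_1,\dots,o_{j^*}\}\subseteq S_2$) and its non-outliers: each outlier is routed to its own driver in $B$ at cost $\le b_2$, and each non-outlier is within $4OPT_2$ of every rider of $S_1$ and hence reachable from every driver of $A$ at cost $\le 4OPT_2+b_1$. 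Feasibility of doing this at once is a Hall-type argument: the at most $\ell<k$ outliers block at most $\ell$ drivers, and since $|A|=k$ the remaining non-outliers still have enough free drivers in $A$. Taking the worst scenario gives the claimed inequality.

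Finally, because the algorithm returns the $D_1$ minimizing the proxy objective, $P(D_1)\le P(j^*)\le OPT_1+5OPT_2$, and the second piece gives $f(D_1)\le P(D_1)+4OPT_2\le OPT_1+9OPT_2$, comfortably within the stated $3OPT_1+17OPT_2$. I expect the main obstacles to be the two driver-assignment arguments: the optimal-driver disjointness of $S_1$ and the outliers (this is what makes the representative scenario cost only $OPT_2$), and the Hall-type feasibility in the reduction, which is precisely where $\ell<k$ enters. A further subtlety is that, since $OPT_2$ is unknown and the scenarios are exponentially many, every matching above must be exhibited purely structurally through the ball/outlier decomposition of Lemma~\ref{lemma:center} rather than computed, so that correctness rests on the inequality holding for all $S$ while the algorithm only ever evaluates $S_1$ and $S_2$.
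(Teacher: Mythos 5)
Your proposal is correct and reproduces the paper's overall architecture — the ball/outlier decomposition of Lemma \ref{lemma:center}, the $\ell+1$ configurations, the proxy scenarios $S_1,S_2$, and the bound $\beta_{j^*}\le OPT_1+5OPT_2$ for the correct configuration (your first piece is essentially the paper's Claim \ref{lemma:ball}) — but your reduction step is genuinely different from the paper's Claim \ref{lemma:betaj}, and in fact sharper. The paper partitions $\{o_1,\dots,o_\ell\}$ at the radius $2\alpha_j+OPT_2$ into $O_1$ and $O_2$ precisely so that the driver sets $\tilde D_1$ (serving $O_1$) and $\tilde D_2$ (serving $S_1$) are provably \emph{disjoint}; the price is that riders of $S\cap O_2$ must be routed through $\delta$ at cost $3\alpha_j+2OPT_2$, which is where the multiplicative factor $3$ and the final $3OPT_1+17OPT_2$ come from. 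You instead split $S$ at the intrinsic outlier radius $3OPT_2$ and replace disjointness by a counting argument: the $t\le j^*<k$ outliers of $S$ occupy $t$ distinct drivers of $B$, hence block at most $t$ of the $k$ drivers of $A$, and since every non-outlier of $S$ is within $4OPT_2+b_1$ of \emph{every} driver of $A$ (via $d(r,\delta)\le 3OPT_2$ and $d(\delta,s)\le OPT_2$ for $s\in S_1$), the remaining $k-t$ riders can be placed injectively on the $\ge k-t$ free drivers of $A$. This yields the purely additive bound $f(D_1)\le \beta(D_1)+4OPT_2$ for every candidate, hence $OPT_1+9OPT_2$ overall, which implies the stated $3OPT_1+17OPT_2$. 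One small imprecision to fix: the count must be done per scenario — if $S$ has $t<\ell$ outliers, the bound ``at most $\ell$ drivers of $A$ are blocked'' leaves only $k-\ell$ free drivers, which is not enough for the $k-t$ non-outliers; the correct statement is that the $t$ outliers block at most $t$ drivers, leaving at least $k-t$ free, which is exactly what is needed. With that wording repaired, your argument is complete.
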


\proof{\textit{Proof.}
We present here a sketch of the proof. The complete details and proofs of the claims appear in Appendix \ref{appendix:implicit}.  For all $j \in \{0, \ldots, \ell\}$, denote
\begin{align*}
    \Omega_j  &=  cost_1\big(D_1(j),R_1\big) \\
    \Delta_j  &=  cost_2\big(D \setminus D_1(j), S_1 \cup \{o_1, \ldots, o_j\}\big) \\
    \beta_j &= cost_1\big(D_1(j),R_1\big) + \max\limits_{S \in \{S_1,S_2\}} cost_2\big(D\setminus D_1(j), S\big).
\end{align*}
Recall $f$ the objective function of TSRMB. In particular,
$$f\big(D_1({j})\big) = cost_1\big(D_1({j}),R_1\big) + \max\limits_{S \in \mathcal{S}} cost_2\big(D\setminus D_1({j}), S\big).$$

Our proof is based on the following two claims. Claim \ref{lemma:ball} establishes a bound on the cost of $D_1(j^*)$ when evaluated on the proxy scenarios $S_1$ and $S_2$ and on all the scenarios in $\mathcal{S}$. Recall that $j^*$ is the threshold index  for the outliers as defined earlier in our construction. Claim \ref{lemma:betaj} bounds the cost of $f(D_1(j))$ for any $j$. The proofs of both claims are presented in Appendix \ref{appendix:implicit}.
 \begin{claim}
\label{lemma:ball} $ $
$\Omega_{j^*} + \Delta_{j^*} \leq OPT_1 + OPT_2.$
\quad and \quad
$f( D_1(j^*))\leq OPT_1 + 5 OPT_2.$
\end{claim}

\begin{claim}
\label{lemma:betaj}
For all $j \in \{0, \ldots, l\}$ we have, $ \beta_j \leq f(D_1(j)) \leq \max\{\beta_j + 4OPT_2, \ 3\beta_j + 2OPT_2\}.$
\end{claim}

Suppose Algorithm \ref{smallsurplus} returns $D_1(\tilde{j})$ for some $\tilde{j}$. From Claim \ref{lemma:betaj} and the minimality of $\beta_{\tilde{j}}$:
$$
    f\big(D_1(\tilde{j})\big)  \leq
    \max\{\beta_{\tilde{j}} + 4OPT_2, 3\beta_{\tilde{j}} + 2OPT_2\}
\leq \max\{\beta_{j^*} + 4OPT_2, 3\beta_{j^*} + 2OPT_2\}.
$$
From Claim \ref{lemma:ball} and Claim \ref{lemma:betaj},  we have  $\beta_{j^*} \leq f\big(D_1(j^*)\big) \leq OPT_1 + 5OPT_2$.  We conclude that,
 $$   f\big(D_1(\tilde{j})\big) \leq \max\big\{OPT_1 + 9OPT_2, 3OPT_1 + 17 OPT_2\big\} = 3OPT_1 + 17 OPT_2. $$
  \hfill \Halmos
\endproof

\color{black}
\subsection{Arbitrary Surplus with $k=1$} \label{subsec:last}

In this part, we consider TSRMB when the surplus can be arbitrary, and each of the second-stage scenarios has a single rider ($k=1$). We present a $3$-approximation algorithm for this case. Recall that, from Theorem~\ref{thm:hardness3}, TSRMB is NP-hard to approximate within a factor better than $3-\epsilon$ under the budget of uncertainty set even when $k=1$. Therefore, our result in this section matches the hardness of approximation and closes the gap for the case of $k=1$.

In our algorithm, we first guess $OPT_2$ by simple enumeration over all the edges between the riders in $R_2$ and the drivers in $D$. We can assume that we know $OPT_2$, since the number of scenarios is exactly $n$, and therefore, we can evaluate any feasible solution in polynomial time. Next, we build a maximal set $S$ from the second-stage riders $R_2$ that are at a distance greater than $2  OPT_2$ from one another. Specifically, we (arbitrary) pick a rider in $R_2$ and add it to $S$, remove all riders that are within a distance of less than $2  OPT_2$, then from the remaining riders in $R_2$, we (arbitrarily) pick another rider, add it to the set $S$, and again remove all riders that are within a distance less than $2  OPT_2$ from the newly added rider. This process continues until all riders in $R_2$ have been considered. At the end of this process, the riders in $S$ are at a distance greater than $2  OPT_2$ from one another, and for each rider in $R_2 \setminus S$, there exists a rider in $S$ within a distance of at most $2  OPT_2$.

Finally, using Algorithm~\ref{alg:singlescenario}, we solve TSRMB-1-Scenario($R_1, S, D$), where the scenario $S$ is as described above. The obtained solution, say $D_1$, is our final solution that provides the $3$-approximation for the TSRMB problem with $k=1$. These steps are summarized in Algorithm~\ref{alg:k1}.

We show in Theorem~\ref{thm:koneimplicit} that the solution returned by Algorithm~\ref{alg:k1} gives a $3$-approximation to the TSRMB problem. The proof is presented in Appendix~\ref{appx:kone}.

{\small
\begin{algorithm}
\color{black}
	\caption{Implicit scenarios with arbitrary surplus and $k=1$.}
	\label{alg:k1}
	\begin{algorithmic}[1]
	\REQUIRE{First-stage riders $R_1$, second-stage riders $R_2$, drivers $D$, and value of $OPT_2$.}
	\ENSURE{First-stage decision $D_1$.}
    \STATE{Construct greedily a maximal set of riders from $R_2$ such that they are at a distance greater than $2  OPT_2$ from one another, and denote this set by $S$.}
    \RETURN $D_1 :=$ TSRMB-1-Scenario$(R_1, S, D)$.
	\end{algorithmic}
\end{algorithm}
}

\begin{restatable}{theorem}{koneimplicit}\label{thm:koneimplicit}
Algorithm \ref{alg:k1} yields a solution with total cost at most $OPT_1 + 3 OPT_2$ for TSRMB in the case where the  budget of uncertainty has parameter $k = 1$.
\end{restatable}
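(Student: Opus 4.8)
The plan is to exploit that with $k=1$ every scenario is a single rider, so the second stage only needs, for \emph{every} potential rider in $R_2$, one reserved driver nearby; this is exactly a covering (supplier) requirement, which is why the $p$-supplier subroutine is the right tool. \textbf{Step 1 (calibrating the supplier solution).} Let $D_1^*$ be an optimal first stage decision and $D_O=D\setminus D_1^*$ the $p=|D|-|R_1|$ drivers it keeps for the second stage. Since each singleton scenario $\{r\}$ with $r\in R_2$ is served within $OPT_2$ by $D_O$, every rider of $R_2$ lies within $OPT_2$ of $D_O$; hence $D_O$ is feasible for the $p$-supplier instance and the supplier optimum is at most $OPT_2$. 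The $3$-approximation of \cite{hochbaum1986unified} then returns a set $C$ of at most $p$ centers, each covering at least one rider, with every rider of $R_2$ within $3OPT_2$ of $C$.

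\textbf{Step 2 (second stage cost).} The pruning loop keeps a maximal subset $D_2\subseteq C$ whose centers are pairwise more than $8OPT_2$ apart, while every discarded center lies within $8OPT_2$ of a retained one. Combining with Step 1, every rider of $R_2$ is within $3OPT_2+8OPT_2=11OPT_2$ of some retained driver. Because the final matching uses only drivers of $D\setminus D_2$, the whole set $D_2$ survives into the second stage, so for every singleton scenario the bottleneck cost is at most $11OPT_2$.

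\textbf{Step 3 (first stage cost).} It remains to show the min-weight matching of $R_1$ into $D\setminus D_2$ costs at most $OPT_1+4OPT_2$; since the algorithm computes the optimal such matching, it suffices to exhibit one. Start from the optimal matching $M^*\colon R_1\to D_1^*$; riders whose mate lies in $D_1^*\setminus D_2$ keep it. For a rider $r$ whose mate $\delta\in B:=D_1^*\cap D_2$ is now reserved, reroute as follows: $\delta$ covers some rider within $3OPT_2$ that is within $OPT_2$ of a driver $\phi(\delta)\in D_O$, so $d(\delta,\phi(\delta))\le 4OPT_2$. The $8OPT_2$-separation of $D_2$ makes $\phi$ injective on $B$ (two $B$-centers sharing a target would be within $8OPT_2$) and forces $\phi(\delta)\notin D_2$ (else $\delta$ and $\phi(\delta)$ lie within $4OPT_2$ and so coincide, impossible since $\delta\in D_1^*$ while $\phi(\delta)\notin D_1^*$); moreover $\phi(\delta)\in D_O$ is disjoint from the mates of the kept riders. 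Rerouting each such $r$ to $\phi(M^*(r))$ therefore gives a valid matching into $D\setminus D_2$ in which each changed edge grows by at most $4OPT_2$. As at most $m$ riders are rerouted, the average first stage cost rises by at most $4OPT_2$, yielding $OPT_1+4OPT_2$.

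Adding the two bounds gives total cost at most $OPT_1+15OPT_2$, as claimed. \textbf{Main obstacle.} The crux is Step 3, namely guaranteeing that all rerouted riders can be sent to \emph{distinct, still-available} drivers. This is exactly where the $8OPT_2$ pruning radius is calibrated: it must be large enough that the $4OPT_2$-radius map $\phi$ is both injective and lands outside the reserved set $D_2$. The separation threshold in the pruning loop is thus the key design choice that simultaneously controls the second stage coverage and enables the first stage exchange argument, and I expect verifying these separation-based injectivity and availability facts (together with the maximality property of the pruning) to be the delicate part of the full proof.
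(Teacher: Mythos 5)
Your proof is correct and follows essentially the same route as the paper's: the $p$-supplier calibration giving the $3OPT_2$ cover, the $8OPT_2$ pruning yielding the $11OPT_2$ second-stage bound, and a rerouting of first-stage riders whose optimal mates land in $D_2$ to distinct available second-stage drivers at an extra cost of $4OPT_2$ per edge, with injectivity forced by the pairwise separation of $D_2$. Your explicit checks that $\phi(\delta)\notin D_2$ and that the rerouted targets avoid the kept riders' mates are points the paper leaves implicit, but the argument is the same.
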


\color{black}

\section{Numerical Experiments}\label{sec:experiments}
In this section, we present an empirical comparison of Algorithm \ref{alg:2scenarios} with the greedy algorithm. We use a taxi data set from the city of Shenzhen to create realistic instances of the TSRMB problem.
\subsection{Data}
The data is collected for a month in the city of Shenzhen \citep{cheng2019stl}\footnote{The raw trajectory record can be found in \url{https://github.com/cbdog94/STL}.}. This data contains the GPS records of taxis in Shenzhen. The details of the data set are summarized in Table \ref{data_details}, where the sample rate means the interval between two adjacent GPS records. A trajectory is constructed by following one taxi between a pick-up (``Occupied''  value change from 0 to 1) and a drop-off. A snapshot of the data is presented in Table \ref{table:snapshot}.

\vspace{5mm}

\begin{center}
 \begin{tabular}{||c c c c c ||} 
 \hline
 Size & \# Taxis & \# Trajectories & Sample rate & Avg trip time  \\ [0.5ex] 
 \hline\hline
 32.7 GB & 9,475 & 6,068,516 & 10-30 s & 863s  \\ 
 \hline
\end{tabular}
\captionof{table}{Details of the taxi trajectory data}
\label{data_details}
\end{center}

\vspace{5mm}

\begin{center}
 \begin{tabular}{||c c c c c c c||} 
 \hline
 Taxi ID & Time & Longitude & Latitude & Speed & Direction & Occupied \\ [0.5ex] 
 \hline\hline
 B97U79 & 2009-09-23 21:30:00 & 113.80275 &  22.66913 & 66 & 157 & 0 \\ 
 B97U79 & 2009-09-23 21:30:20 & 113.80137 & 22.67106 & 18 &  157 & 1\\
 \hline
\end{tabular}
\captionof{table}{Example of the taxi trajectory data}
\label{table:snapshot}
\end{center}

\begin{figure}
    \centering
    \includegraphics[scale=0.4]{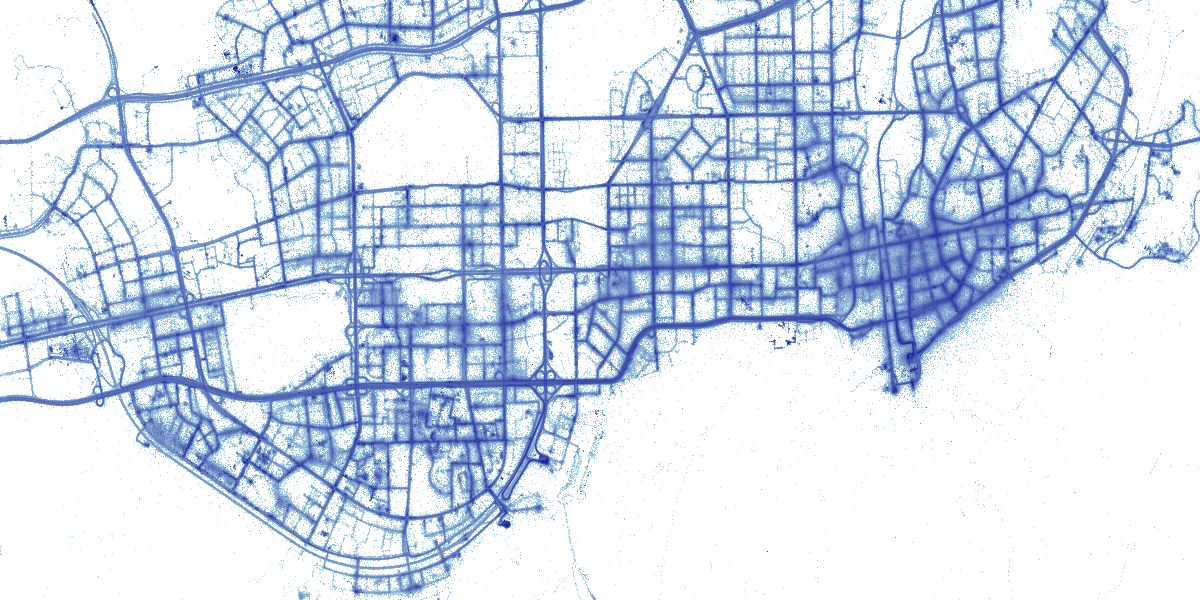}
    \caption{Union of the trajectories in Downtown Shenzhen on 09/17/2009}
    \label{fig:shenzen}
\end{figure}

\subsection{Experiment Setup}
We focus on the GPS records of downtown Shenzhen, with $|\mbox{longitude} -114.075| \leq 0.075$ and $|\mbox{latitude} - 22.54| \leq 0.03$ (See Figure \ref{fig:shenzen}). In a specific time range, we locate the riders by following taxis and observing when the occupied entry changes from 0 to 1. This change means that a pickup occurred and the rider's location is estimated to be the same as the taxi location at the time of pickup. For different days $d$ of the month, and different times $t$ of the day, we consider the pickups that were made in $[t,t+1\mbox{min}]$ to be the first-stage riders $R_1$. For the second-stage riders, we construct two scenarios $S_1, S_2$ using the pickups that occurred in $[t+1\mbox{min}, t+\mbox{2min}]$ in $d-7$ and $d-14$ respectively, which represent the same day as $d$ in the two previous weeks. We also construct the realized scenario $S^*$, which contains the pickups in $[t+\mbox{1min}, t+\mbox{2min}]$ of day $d$. We use the taxis of day $d$ that were not occupied in the past 5 minutes before $t$ to sample the set of drivers $D$. The edge weights correspond to the distances between drivers and riders. Note that in the data set, the number of all available drivers in the past 5 minutes is considerably higher than the number of pickups. Hence, to simulate a busier time,  we  randomly sample $2.5 \times |R_1|$ drivers in every instance. For every instance, we preform $10$ random driver samples, solve the problem for every sample, and report the average. We report results from different times of the 17th day of the month, with $S_1$ and $S_2$ constructed from the 3rd and the 10th day respectively.

{\color{black}
\subsection{Evaluation Metrics and Experimental Results}
We use Algorithm~\ref{alg:2scenarios} to solve the TSRMB problem with second-stage scenarios $S_1$ and $S_2$. Let $Alg(S_1, S_2)$ denote the cost of the solution returned by Algorithm~\ref{alg:2scenarios}, which, as defined in the TSRMB problem, is the sum of the average minimum-cost matching in the first stage and the worst-case minimum-cost bottleneck matching in the second-stage over scenarios ${S_1, S_2}$. We refer to this setting as the \textit{in-sample} case.
We denote by $OPT(S_1, S_2)$ the optimal cost of the TSRMB problem with the two scenarios ${S_1, S_2}$, computed using an integer programming formulation developed in Appendix~\ref{appendix:ip_tsrmb}. Let $Gr(S_1, S_2)$ denote the total worst-case cost of the greedy solution, which myopically solves the first-stage problem and uses the remaining drivers to serve either $S_1$ or $S_2$. Specifically, the greedy solution computes a minimum-cost matching between $R_1$ and $D$ to determine the first-stage decision $D_1$, whose cost is evaluated as the average minimum-cost matching. The second-stage cost is computed as the worst-case minimum bottleneck cost of a matching between $D \setminus D_1$ and the scenarios ${S_1, S_2}$. We compare the in-sample performance of Algorithm~\ref{alg:2scenarios} and the greedy algorithm to the optimal cost by reporting the ratios $Gr(S_1, S_2)/OPT(S_1, S_2)$ and $Alg(S_1, S_2)/OPT(S_1, S_2)$.

We also evaluate Algorithm~\ref{alg:2scenarios} on out-of-sample data. In particular, we take the first-stage decision $D_1$ returned by Algorithm~\ref{alg:2scenarios} and use the remaining drivers $D \setminus D_1$ to serve a realized second-stage scenario $S^*$. We refer to this setting as the \textit{out-of-sample} case, where $S_1$ and $S_2$ serve as predictions of $S^*$. Let $Alg(S^*)$ denote the total cost of our algorithm evaluated on the realized scenario $S^*$, comprising the average first-stage matching cost and the second-stage minimum bottleneck cost. Similarly, let $Gr(S^*)$ denote the total cost of the greedy solution evaluated on $S^*$. Both are computed using the same cost metric as in the TSRMB objective.
Additionally, we define $OPT^{S_1, S_2}(S^*)$ as the cost incurred by the optimal TSRMB solution for scenarios $S_1$ and $S_2$, evaluated on $S^*$. This is obtained by first solving TSRMB optimally on $S_1$ and $S_2$ (via the IP formulation) to obtain a first-stage decision $D_1$, and then using $D \setminus D_1$ to match $S^*$. 
Finally, $OPT(S^*)$ denotes the cost of the optimal solution that knows offline the scenario $S^*$, i.e., $OPT(S^*)$ is the cost of TSRMB problem with a single scenario $S^*$ which we compute using Algorithm \ref{alg:singlescenario}.
All evaluation metrics reported in Table~\ref{table:outofsample} are based on the TSRMB cost function, i.e., the sum of the average first-stage matching cost and the worst-case second-stage bottleneck cost. We compare the out-of-sample performance of Algorithm~\ref{alg:2scenarios}, the greedy algorithm, and the optimal two-scenario solution using the ratios $Alg(S^*)/OPT(S^*)$, $Gr(S^)/OPT(S^*)$, and $OPT^{S_1, S_2}(S^*)/OPT(S^*)$.
The in-sample and out-of-sample performances for different times on 09/17 are presented in Table \ref{table:outofsample}. The columns ``1st Stage'' and ``2nd Stage''  denote the time range of the first and second stage respectively.

In Table~\ref{table:matchingcomparison}, we further compare the second-stage bottleneck costs of the greedy algorithm and our algorithm on the out-of-sample scenario $S^*$. Specifically, we report the ratio $Gr_2(S^*)/Alg_2(S^*)$, where $Gr_2(S^*)$ and $Alg_2(S^*)$ denote the second-stage bottleneck costs of the greedy and Algorithm~\ref{alg:2scenarios} solutions, respectively. Finally, the column ``Sum weights Ratio ($Gr / Alg$)'' reports the ratio of total matching weights (i.e., the sum of edge weights) between the greedy and our algorithm’s solution when both are evaluated on $S^*$. Note that this total weight includes both the sum of  weights of the first-stage matching  and the sum of weights of the second-stage matching. Although, our algorithm does not optimize directly for this metric, we are computing it  to assess the generalization ability of our algorithm beyond the specific TSRMB objective.


\begin{table}[h]
\begin{center}
 {\color{black}
 \begin{tabular}{||c|c|c|c|c|c|c|c|c|c||}
 \hline
  \multirow{2}{*}{1st Stage} & \multirow{2}{*}{2nd Stage} & \multirow{2}{*}{$|D|$} & \multirow{2}{*}{$|R_1|$} & \multirow{2}{*}{$|S^*|$}& \multicolumn{3}{c|}{Out-of-sample} & \multicolumn{2}{c||}{In-sample} \\ \cline{6-10}
   &  &  &  &  & $\frac{Gr(S^*)}{OPT(S^*)}$  & $\frac{Alg(S^*)}{OPT(S^*)}$ & $\frac{OPT^{S_1S_2}(S*)}{OPT(S^*)}$ & $\frac{Gr(S_1,S_2)}{OPT(S_1,S_2)}$ & $\frac{Alg(S_1,S_2)}{OPT(S_1,S_2)}$\\ [1ex] 
 \hline\hline
 09:00-01 & 09:01-02  & 215 & 86 & 97 & 1.55 & 1.46 & 1.43 & 1.39 & 1.07\\
 \hline
10:00-01 &	10:01-02 & 187 & 75 & 54 & 1.50 & 1.48 & 1.45 & 1.21 & 1.11\\
\hline
11:00-01 &	11:01-02 & 210 & 84 & 78 & 1.17 & 1.12 & 1.11 & 1.12 & 1.08\\
\hline
12:00-01	& 12:01-02 & 215 & 86 & 91 & 1.20 & 1.11 & 1.05 & 1.31 & 1.01\\
\hline
13:00-01 & 13:01-02  & 205 & 82 & 93 & 1.20 & 1.22 & 1.18 & 1.15 & 1.03 \\
\hline
14:00-01 & 14:01-02  & 342 & 137 &138 & 1.42& 1.39 & 1.39 & 1.33 & 1.09\\
\hline
15:00-01 & 15:01-02  & 355 & 142 & 120 & 1.39 & 1.38 & 1.28 & 1.15& 1.12\\
\hline
16:00-01 & 16:01-02  & 345 & 138 & 132 & 1.38 & 1.33 & 1.32 & 1.29 & 1.19\\
\hline
17:00-01 & 17:01-02 	& 295 & 118 & 113 & 1.35 & 1.26 & 1.28& 1.39 & 1.11\\
\hline
18:00-01 & 18:01-02 	& 287 & 115 & 102 & 1.38 & 1.32 & 1.27 & 1.40& 1.09\\
\hline
19:00-01 & 19:01-02 	& 300 & 120 & 112 & 1.27 & 1.17 & 1.14 & 1.19 & 1.07 \\
\hline
20:00-01 & 20:01-02  & 307 & 123  & 134 & 1.58 & 1.37 & 1.34 & 1.57 &  1.17 \\ 
\hline
21:00-01 & 21:01-02  & 370 & 143  & 147 & 1.17 & 1.13& 1.13 & 1.06 & 1.25
 \\

 \hline
 \hline
\end{tabular}

}

\captionof{table}{\color{black}In-sample and out-of-sample performance evaluation of Greedy and Algorithm  \ref{alg:2scenarios}.}
\label{table:outofsample}
\end{center}
\end{table}

\begin{table}[h]
\begin{center}
 \color{black}{
 \begin{tabular}{||c|c|c|c|c|c|c||} 
 \hline
   1st Stage & 2nd Stage & $|D|$ & $|R_1|$ &  $|S^*|$ & $Gr_2(S^*)/Alg_2(S^*)$ & Sum weights Ratio ($Gr / Alg$)\\ [0.5ex] 
 \hline\hline
 09:00-01 & 09:01-02  & 215 & 86 & 97 & 1.32  & 0.98\\
 \hline
10:00-01 &	10:01-02 & 187 & 75 & 54 & 1.14 & 0.98\\
\hline
11:00-01 &	11:01-02 & 210 & 84 & 78 & 1.08 & 0.99 \\
\hline
12:00-01	& 12:01-02 & 215 & 86 & 91 & 1.16 & 0.99\\
\hline
13:00-01 & 13:01-02  & 205 & 82 & 93 & 1.18 & 0.98 \\
\hline
14:00-01 & 14:01-02  & 342 & 137 &138 & 1.15	& 0.99  \\
\hline
15:00-01 & 15:01-02  & 355 & 142 & 120 &  1.22 & 0.99  \\
\hline
16:00-01 & 16:01-02  & 345 & 138 & 132 & 1.17 & 0.98 \\
\hline
17:00-01 & 17:01-02 	& 295 & 118 & 113 & 1.08  & 0.99 \\
\hline
18:00-01 & 18:01-02 	& 287 & 115 & 102 & 1.40 & 1.01 \\ 
\hline
19:00-01 & 19:01-02 	& 300 & 120 & 112 & 1.41 & 0.98   \\
\hline
20:00-01 & 20:01-02  & 307 & 123  & 134 &  1.33 & 0.96  \\ 
\hline
21:00-01 & 21:01-02  & 370 & 143  & 147 & 1.28  & 0.97
 \\

 \hline
 \hline
\end{tabular}
}
\captionof{table}{\color{black}Comparison of Algorithm \ref{alg:2scenarios} and Greedy on out-of-sample w.r.t. the second-stage bottleneck cost and the total sum of weights.}
\label{table:matchingcomparison}
\end{center}
\end{table}

\subsection{Discussion}
We observe from Table~\ref{table:outofsample} that our Algorithm~\ref{alg:2scenarios} performs very well and achieves near-optimal costs. In particular, it significantly outperforms the greedy algorithm in both in-sample and out-of-sample evaluations.
In-sample, as shown in the last two columns of Table~\ref{table:outofsample}, our algorithm’s average cost is within 10\% of the optimal, while the greedy algorithm is, on average, about 27\% worse than the optimum.
Out-of-sample, we compare the performance of three methods: the greedy algorithm, Algorithm~\ref{alg:2scenarios}, and the first-stage solution obtained by optimally solving the TSRMB problem with scenarios $S_1$ and $S_2$. All three are evaluated on the realized scenario $S^*$ and benchmarked against the offline optimal that knows $S^*$ in advance. As shown in Table~\ref{table:outofsample}, the greedy algorithm incurs an average cost that is 35\% higher than the offline benchmark; Algorithm~\ref{alg:2scenarios} is within 28\%, and the TSRMB-based optimal solution is within 25\%. These results demonstrate that our algorithm closely tracks the performance of the  first-stage optimal solution of TSRMB and substantially improves over the greedy baseline.

Furthermore, since the greedy algorithm is by design optimal for the first-stage minimum cost matching, any improvement from Algorithm~\ref{alg:2scenarios} must come from a better second-stage bottleneck cost without significantly increasing the first-stage cost. Table~\ref{table:matchingcomparison} confirms this: the second-stage bottleneck cost of greedy  is on average 22\% higher than that of our algorithm. Meanwhile, the total matching weights across both stages are nearly identical between the two methods, as shown in the last column of Table~\ref{table:matchingcomparison}.
If the edge weights represent rider wait times, these results imply that our algorithm substantially reduces the maximum second-stage wait time while keeping the overall average wait time nearly unchanged (the average ratio of the last column is around 0.98\%). In this sense, our algorithm introduces more fairness in the wait time distribution by reducing extremes without sacrificing average performance.
}

\subsection{Evaluation on more than 2 Stages}

In practice, ride-hailing platforms apply their matching algorithms on a rolling horizon. That is, the planning interval is moved forward in time during each step, and then the matching problem is solved. To understand the long-term performance of our two-stage algorithm, we evaluate it on different horizons and compare its performance with that of a greedy algorithm that myopically solves each stage without considering the next batch. Specifically, we consider a number of consecutive stages \( T \in \{3, 5, 10\} \). The general framework we adopt to evaluate an algorithm on more than two stages is as follows: At time \( t \), we solve the two-stage problem (TSRMB) for stages \( t \) and \( t+1 \). This yields a set of drivers to be matched with the \( t \)-th stage scenario and a set of remaining available drivers. These drivers are then used to solve the next two-stage problem, with the realized scenario of \( t+1 \) as the first scenario, and the uncertain scenarios in \( t+2 \) as the second stage. The greedy algorithm operates as follows: At each time \( t \), it solves a minimum cost matching to match the realized scenario of time \( t \), without taking into account the next possible scenarios.

Similarly to the previous section's setup, we consider the pickups made in \([t, t + 1 \mbox{min}]\) as the riders of the \( t \)-th stage. In each stage, we construct two scenarios from days \( d-7 \) and \( d-14 \) respectively. We also construct the realized scenario, which contains the pickups of day \( d \). We use the taxis of day \( d \) that were not occupied in the past 5 minutes before the first pickup to sample the set of drivers \( D \). For each iteration, we sample a number of drivers proportional to the number of stages (\( D = 150 \cdot T \) for \( T \in \{3, 5, 10\} \)).

We evaluate our algorithm and the greedy algorithm on two metrics. The first metric is the sum of the bottleneck matching across all stages, reflecting the maximum distance/waiting time across all stages. The results are given in Table \ref{table:11}, where we report the ratio of the cost of the greedy algorithm over our algorithm {\color{black}that we denote \( \text{Greedy}_B/\text{ALG}_B \), where $B$ stands for bottleneck.}  We observe that our algorithm significantly improves over the greedy algorithm. {\color{black}  In particular, the cost of the greedy algorithm is 15\% higher in average than that of our algorithm.} This is because our algorithm hedges against future uncertainty, while the greedy approach is myopic.

The second metric is the sum of the average min cost matching across all stages, reflecting the average ride time across all stages. The results are given in Table \ref{table:22}, where we report the ratio of the cost of the greedy algorithm over our algorithm {\color{black}that we denote \( \text{Greedy}_A/\text{ALG}_A \), with $A$ referring to average matching.} We observe that our algorithm slightly improves over the greedy algorithm. In particular, the cost of the greedy algorithm can be 1\% to even 7\% higher than that of our algorithm.
Finally, we note that the improvement of our algorithm over the greedy one is more pronounced in the bottleneck metric than in the average ride time, which is expected, as the TSRMB model is designed to hedge against the bottleneck metric in its second stage.

\begin{table}[h]
\centering
\begin{minipage}{0.48\textwidth}
\centering
\color{black}
\begin{tabular}{||c|c||c|c||}
\hline
Instance & Starting time & \# Stages & $\frac{\text{Greedy}_B}{\text{ALG}_B}$ \\
\hline\hline
\multirow{3}{*}{3} & \multirow{3}{*}{15:00} & 3  & 1.17 \\
\cline{3-4}
& & 5 & 1.15 \\
\cline{3-4}
& & 10 & 1.10 \\
\hline
\multirow{3}{*}{2} & \multirow{3}{*}{18:00} & 3  & 1.21 \\
\cline{3-4}
& & 5 & 1.11 \\
\cline{3-4}
& & 10 & 1.08 \\
\hline
\multirow{3}{*}{1} & \multirow{3}{*}{21:00} & 3  & 1.23 \\
\cline{3-4}
& & 5 & 1.13 \\
\cline{3-4}
& & 10 & 1.18 \\
\hline\hline
\end{tabular}
\caption{Bottleneck matching for all stages}
\label{table:11}
\end{minipage}
\hfill
\begin{minipage}{0.48\textwidth}
\centering
\color{black}
\begin{tabular}{||c|c||c|c||}
\hline
Instance & Starting time & \# Stages & $\frac{\text{Greedy}_A}{\text{ALG}_A}$ \\
\hline\hline
\multirow{3}{*}{3} & \multirow{3}{*}{15:00} & 3  & 1.03 \\
\cline{3-4}
& & 5 & 1.02 \\
\cline{3-4}
& & 10 & 1.01 \\
\hline
\multirow{3}{*}{2} & \multirow{3}{*}{18:00} & 3  & 1.06 \\
\cline{3-4}
& & 5 & 1.07 \\
\cline{3-4}
& & 10 & 1.04 \\
\hline
\multirow{3}{*}{1} & \multirow{3}{*}{21:00} & 3  & 1.04 \\
\cline{3-4}
& & 5 & 1.02 \\
\cline{3-4}
& & 10 & 1.01 \\
\hline\hline
\end{tabular}
\caption{Average matching for all stages}
\label{table:22}
\end{minipage}
\end{table}

 \section{Other Cost Metrics}\label{sec:variants}
In this section, we initiate the study of other variants of two-stage matching problems, under both robust and stochastic models of uncertainty and for different cost functions. We define these problems, study their hardness of approximation and design  approximation algorithms in some specific cases. We summarize our results below and defer all the details to Appendix~\ref{appendix:stochastic} and Appendix~\ref{appendix:tsrm}.

\begin{enumerate}
    \item  \textbf{ {Two-Stage Robust Bottleneck Bottleneck Problem} (TSRBB). }The only difference from the TSRMB is that the first-stage cost is the bottleneck  of the first-stage matching. All the  approximation algorithms we have provided in Table \ref{table:results} for the TSRMB problem easily carry to this problem.
    {\color{black}In fact, the proofs of our algorithmic results do not rely on the specific structure of the first-stage cost function, as long as an optimal matching can be efficiently computed for the single-scenario case.}
    Specifically, the algorithms can be adapted to account for a bottleneck cost matching in place of the average minimum cost. For instance, in our algorithms, when we typically solve a first-stage minimum cost matching (as in Step 4 of Algorithm 1), it would now be replaced with a minimum bottleneck matching. This alteration does not impact the validity of our proofs, as they do not rely on the assumption that $OPT_1$ is derived from the cost function of minimum cost matching; it can be just as effectively applied to the cost function of bottleneck matching. {\color{black} For the hardness of approximation, Theorem \ref{thm:hardness2} and the first part of Theorem~\ref{thm:hardness_robust} still hold if we replace the average cost matching in the first stage with a bottleneck cost matching. In particular, TSRBB remains NP-hard with two scenarios and is NP-hard to approximate within a factor better than $3-\epsilon$ for general uncertainty sets. It is also $\Sigma_2^p$-hard under budget of uncertainty set. However, the second part of Theorem \ref{thm:hardness_robust} and Theorem \ref{thm:hardness3} rely on the fact that the first-stage cost is derived from an average matching, so these results do not carry over to the bottleneck case.
    
   }

    \item \textbf{Two-Stage Stochastic Matching Bottleneck Problem} (TSSMB). In this problem, the first-stage cost is the same as the TSRMB (e.g. the average matching weight). However, we assume that we have an explicit list of scenarios $\mathcal{S} = \{S_1, \ldots, S_p\}$. Scenario $S_i$ is realized with probability $p_i$. The objective is to minimize the function
    $$ \min\limits_{D_1 \subset D}\Big\{ cost_1(D_1,R_1) + \sum\limits_{i=1}^p p_i \cdot cost_2(D\setminus D_1, S_i)\Big\},$$
    where $ cost_2(D\setminus D_1, S_i)$ is the bottleneck matching cost between $D\setminus D_1$ and scenario $S_i$. In Appendix~\ref{appendix:stochastic}, we show this problem is NP-hard to approximate within a factor better than $5/3$. We also provide an algorithm that yields a 3-approximation when there is no surplus.
    \item \textbf{Two-Stage Robust Matching Problem} (TSRM). In this problem, the cost of the first stage is the total weight of the first-stage matching, and the second-stage cost is the total weight of the worst-case matching over scenarios. We present the formal definition of this problem in Appendix \ref{appendix:tsrm}, and show it is NP-hard even with two scenarios. Kalyanasundaram and Pruhs \cite{kalyanasundaram1993online} consider the online version of this problem, and show that the greedy algorithm is  $3$-competitive for two stages and therefore yields a 3-approximation in the worst case as well. We further improve this result and show a $7/3$-approximation when there is no surplus.
\end{enumerate}

\section{Conclusion}\label{sec:conclusion}

In this paper, we present a new two-stage robust optimization framework for matching problems under both explicit and implicit models of uncertainty. Our problem is motivated by real-life applications in the ride-hailing industry. We consider different cost functions under this model and study their theoretical hardness. We particularly focus on the Two-Stage Robust Matching Bottleneck variant and design approximation algorithms for implicit and explicit scenarios under different settings. Our algorithms give a constant approximation if the number of scenarios is fixed but require additional assumptions when there are polynomially or exponentially many scenarios to get a constant approximation. It is an interesting question whether there exists a constant approximation algorithm in the most general case that does not depend on the number of scenarios. Furthermore, we have tested our algorithms on a taxi dataset and showed that they improve significantly over the greedy approach, resulting in a reduction of the maximum wait time for taxi riders. 
{\color{black}Finally, we have compiled a list of open problems related to TSRMB and its variants in Appendix \ref{openproblems}, which we hope will guide future research on this topic.}

\vspace{2mm}
\color{black}
\noindent
{\bf Acknowledgements:} The authors thank the area editor, associate editor, and two anonymous referees whose comments helped us significantly strengthen both the technical results and the presentation of our paper.

\color{black}
\bibliographystyle{informs2014}
\setlength{\bibsep}{0.0pt}
\bibliography{bibliography}

\newpage
\begin{APPENDIX}{}

\section{Comparison to a Fully Adversarial Model} \label{newyear}

\red{In this section, we compare the TSRMB model to a {\em fully adversarial} model where we assume complete adversarial conditions without a predefined uncertainty set \(\mathcal{S}\) defining scenarios. The goal is to show that unlike the TSRMB model, the fully adversarial problem lacks a bounded approximation. {\color{black}Unlike the rest of the paper, which uses approximation guarantees relative to an optimal robust solution, this fully adversarial model is analyzed using competitive analysis, where the performance is measured against the hindsight optimum. }
Specifically, consider a scenario where there is a set of first-stage riders that need to be matched in the first stage with minimum average cost. Then, the adversary selects a second-stage scenario of riders that needs to be matched with the minimum bottleneck cost. If the adversary arbitrarily selects the second-stage scenario, not from a predefined uncertainty set, then the cost of any algorithm can be arbitrarily high compared to an offline algorithm that knows the scenario realization upfront.}

\red{To illustrate this, consider the following instance: we have a line with \(m+1\) points, labeled \(1, 2, \ldots, m+1\), each representing a driver. There is a distance of 1 between two consecutive points \(i\) and \(i+1\). Consider \(m\) first-stage riders located at points \(1\) through \(m\). The second-stage scenario is composed of a single rider, picked by an adversary. Suppose an algorithm match the initial set of first-stage riders, without any knowledge of the uncertainty set. This algorithm would leave one driver unused, say driver \(i\). If \(i \leq m/2\), let the adversary choose a second-stage rider located at point \(m+1\), so the second-stage cost is greater than \(m/2\). If \(i \geq m/2\), the adversary can choose a second-stage rider located at point \(1\), making the second-stage cost again greater than \(m/2\). Hence, the total cost of the algorithm can be \(\Omega(m)\) in the worst case. In contrast, an offline algorithm that knows the scenario realization upfront can optimize the matching. If the second-stage rider is located at point \(1\), then driver \(1\) is used to match it, resulting in a zero second-stage cost. The first-stage riders are matched such that driver \(i+1\) serves rider \(i\), resulting in an average cost of \(1\). Similarly, if the second-stage rider appears at point \(m+1\), each driver \(i\) serves rider \(i\),  resulting in a zero cost. This example demonstrates that achieving an online competitive ratio in the context of a fully adversarial two-stage model is not feasible.}

\section{NP-Hardness Proofs for TSRMB}\label{appendix:hardness}

\subsection{Proof of Theorem \ref{thm:hardness_robust}}

We start by presenting the 3-Dimensional Matching problem that we use in our reduction to show Theorem \ref{thm:hardness_robust}.  This problem is known to be  NP-complete \citep{karp2010reducibility}.

\noindent \textbf{3-Dimensional Matching (3-DM):} Given three sets $U$, $V$, and $W$ of equal cardinality $k$, and a subset $T$ of $U \times V \times W$, is there a subset $M$ of $T$ with $|M| = k$ such that whenever 
$(u, v, w)$ and $(u', v', w')$ are distinct triples in $M$, $u \neq u'$, $v\neq v'$, and $w \neq w'$ ? 


\vspace{2mm}
\noindent
\proof{\textit{Proof of Theorem \ref{thm:hardness_robust}}.}
\underline{\bf First part:} Consider an instance of the 3-DM problem. We  use it to construct (in polynomial time) an instance of TSRMB with 2 scenarios as follows:
\begin{itemize}
    \item Create two scenarios of size $k$: $S_1 = U$ and $S_2 = V$.
    \item Set $D = T$, every driver corresponds to a triple in $T$.
    \item For every $w \in W$, let $d_T(w)$ be the number of sets in $T$ that contain $w$. We create $d_T(w)-1$ first-stage riders, that are all copies of $w$. The total number of first-stage riders is therefore
    $|R_1| = |T|-k$.
    \item For $(w,e) \in R_1 \times D$, $d(w,e) = \left\{
    \begin{array}{ll}
       1 & \mbox{ if } w \in e \\
       3 & \mbox{ otherwise.}
    \end{array}
\right.$
    \item For $(u,e) \in S_1 \cup S_2 \times D$, $d(u,e) = \left\{
    \begin{array}{ll}
       1 & \mbox{ if } u \in e \\
       3 & \mbox{ otherwise.}
    \end{array}
\right.$
    \item For $u,v \in R_1\cup S_1\cup S_2, d(u,v) = \min\limits_{e\in D} d(u,e) + d(v,e)$.
    \item For $e,f \in D, d(e,f) = \min\limits_{u\in R_1\cup S_1\cup S_2} d(u,e) + d(u,f)$.
\end{itemize}
This choice of distances induces a metric graph. We claim that there exists a 3-dimensional matching if and only if there exists a solution to this TSRMB instance with total cost equal to $2$. Suppose that $M = \{e_1, \ldots, e_k\} \subset T$ is a 3-dimensional matching. Let $e_1, \ldots, e_k$ be the drivers that correspond to $M$ in the TSRMB instance. We show that by using $D_1 = D \setminus \{e_1, \ldots, e_k\}$ as a first-stage decision, we ensure that the total cost for the TSRMB instance is equal to 2. For any rider $u$ in scenario $S_1$, by definition of $M$, there exits a unique edge $e_i \in M$ that covers $u$. The corresponding driver $e_i \not\in D_1$ can be matched to $u$ with a distance equal to 1. Furthermore, $e_i$ cannot be matched to any other rider in $S_1$ with a cost \red{ at most 1}. Similarly, for any rider $v$ in scenario $S_2$, since there exits a unique edge $e_j \in M$ that covers $v$, the corresponding driver can be matched to $v$ with a cost of 1. The second-stage cost is therefore equal to 1. As for the first-stage cost, we know by definition of $M$, that every element $w \in W$ is covered exactly once. Therefore, for every $w \in W$, there exists $d_T(w) - 1$ edges that contain $w$ in $T \setminus M$. This means that every 1st stage rider can be matched to a driver in $D_1$ with a cost equal to 1. Hence the total cost of this two-stage matching is equal to 2.

Suppose now that there exists a solution to the TSRMB instance with a cost equal to 2. This means that the first and second-stage costs are both equal to 1. Let $M = \{e_1, \ldots, e_k\}$ be the set of drivers used in the second stage of this solution. We show that $M$ is a 3-dimensional matching. Let $e_i = (u, v, w)$ and $e_j = (u', v', w')$ be distinct triples in $M$. Since the second-stage cost is equal to $1$, the driver $e_i$ (resp. $e_j$) must be matched to $u$ (resp. $u'$) in $S_1$. Since we have exactly $n$ second-stage drivers and $k$ riders in $S_1$, this means that $e_i$ and $e_j$ have to be matched to different second-stage riders in $S_1$. Therefore we get $u' \neq u$. Similarly we see that $v' \neq v$. Assume now that $w = w'$,  this means that the TSRMB solution has used two drivers (triples) $e_i$ and $e_j$ that contain $w$ in the second stage. It is therefore impossible to match all the $d_T(w)-1$ copies of $w$ in the first stage with a cost equal to 1. Therefore $w \neq w'$. The above construction can be performed in polynomial time of the 3-DM input, and therefore shows that TSRMB with two scenarios is NP-hard.

\vspace{3mm}
\color{black}

\underline{\bf Second part:} Next, we show that TSRMB with three scenarios is NP-hard to approximate within a factor better than $3- \epsilon$ for any $\epsilon >0$. We employ a reduction from the 3-DM problem. Consider an instance of 3-DM with three sets $U, V, W$ of equal cardinality $k$ and a subset $T \subseteq U \times V \times W$. We construct an instance of TSRMB with three scenarios as follows:

\begin{itemize}
    \item Let $r $ be a very large integer. Create a \textit{dummy location}, where we place $r$ drivers and $r$ first-stage riders. This dummy location is very far from the rest of the graph.
    
    \item In addition to the $r$ drivers at the dummy location, create $|T|$ drivers, each corresponding to a triple in $T$. Denote this set of drivers by $\hat{D}$. Thus, the total number of drivers is $|D| = |\hat{D}| + r = |T| + r$.
    
    \item In addition to the $r$ first-stage riders at the dummy location, create $|T| - k$ first-stage riders, denoted by $\hat{R}_1$. Thus, the total number of first-stage riders is $|R_1| = |\hat{R}_1| + r = |T| - k + r$.
    
    \item Create three scenarios of second-stage riders, each of size $k$: $S_1 = U$, $S_2 = V$, and $S_3 = W$.\footnote{\color{black}Note in this TSRMB instance we have $|D|=|R_1|+k$, i.e., the surplus is zero.}
\end{itemize}

The dummy location, containing $r$ drivers and $r$ first-stage riders, is very far from the rest of the graph, i.e., very far from  $\hat{D}$, $\hat{R}_1$,   $S_1,S_2$ and $S_3$. This ensures that the $r$ drivers in the dummy location will match exclusively with the $r$ first-stage riders there, incurring a cost of $0$. The distances in the rest of the graph are defined as follows:

\begin{itemize}
    \item For $(w, e) \in \hat{R}_1 \times \hat{D}$, set $d(w, e) = 1$.
    \item For $(u, e) \in (S_1 \cup S_2 \cup S_3) \times \hat{D}$, set:
    $
    d(u, e) = 
    \begin{cases} 
       1 & \text{if } u \in e, \\
       3 & \text{otherwise}.
    \end{cases}
    $
    \item For $u, v \in \hat{R}_1 \cup S_1 \cup S_2 \cup S_3$, set $d(u, v) = \min_{e \in \hat{D}} \{ d(u, e) + d(v, e) \}$.
    \item For $e, f \in \hat{D}$, set $d(e, f) = \min_{u \in \hat{R}_1 \cup S_1 \cup S_2 \cup S_3} \{ d(u, e) + d(u, f) \}$.
\end{itemize}

\vspace{2mm}
This choice of distances induces a metric graph. First, as mentioned earlier, the $r$ first-stage riders in the dummy location are matched at a cost of $0$ since they are all at the same location as the $r$ drivers. The remaining first-stage riders in $\hat{R}_1$ are within a distance of $1$ from all drivers in $\hat{D}$. Thus, in any feasible solution, all riders in $\hat{R}_1$ will be matched at a distance cost of $1$. Therefore, the first-stage cost of any feasible solution to TSRMB, which is the average weight matching cost, is given by:
\[
\frac{1}{r + |\hat{R}_1|} ( |\hat{R}_1| \times 1+ r \times 0)= \frac{|\hat{R}_1|}{r + |\hat{R}_1|} = \frac{|T| - k}{r + |T| - k}.
\]
Fix any $\epsilon >0$ infinitely small. By choosing $r$ very large, we can ensure  that $\frac{|T| - k}{r + |T| - k} = \epsilon$ . Hence, the first-stage cost of any feasible solution is equal to $\epsilon$.

We show that there exists a 3-dimensional matching if and only if there is a TSRMB solution with a second-stage cost of $1$.  In fact,  if there exists a 3-dimensional matching $M = \{e_1, \dots, e_k\} \subseteq T$, use the drivers corresponding to $M$ to serve the second-stage riders. By the definition of a 3-dimensional matching, for each rider $u$ in $U=S_1$, there exists a distinct edge $e_i \in M$ that covers $u$ (i.e., there exists a distance driver $e_i$ such that the ditance $d(u,e_i)=1$). Similarly, this holds for riders in $S_2$ and $S_3$. Therefore, each second-stage scenario is matched with a bottleneck cost of $1$.

Conversely, suppose the second-stage cost of TSRMB is $1$. Consider the subset of drivers used in the second stage in an optimal solution, denoted by $M$. Note that $M$ has size $k$, and each driver in $M$ serves  three distinct riders: one from $S_1$, one from $S_2$, and one from $S_3$, all at a distance of $1$.  Hence, for each driver in $M$, the corresponding edge in $T$ covers a unique triple in $U \times V \times W$. Therefore, the subset of edges corresponding to $M$ forms a 3-dimensional matching in $T$.

If no 3-dimensional matching exists, at least one rider in the second stage must be matched with a driver at a distance of $3$. Consequently, the second-stage bottleneck cost of TSRMB has to be 3 and the total cost of TSRMB is $3 + \epsilon$. Therefore, we conclude that the total cost of TSRMB is $1+\epsilon$ if a 3-dimensional matching exists and $3+\epsilon$ if no 3-dimensional matching exists.

As the decision problem of finding a 3-dimensional matching is NP-complete, determining whether the cost of TSRMB is $1 + \epsilon$ or $3 + \epsilon$ is also NP-complete. Therefore, unless NP = P, it is NP-hard to approximate TSRMB within a factor better than $3 - \epsilon'$ for any $\epsilon' > 0$.

\hfill \Halmos
\endproof

\color{black}

\subsection{Proof of Theorem \ref{thm:hardness2}}

\red{In order to proof Theorem \ref{thm:hardness2}, the first part employs a reduction from the Clique problem, defined as follows:}

\red{
\noindent \textbf{Clique Problem:} Given an unweighted graph $G$ with edges $E$ and vertices $V$, a clique of size $p$ is a subset of $p$ vertices where each pair of vertices is adjacent. The Clique problem involves determining whether, for a given graph $G=(V,E)$ and an integer $p$, there exists a clique of size $p$. This problem is one of Karp's 21 NP-complete problems \citep{karp2010reducibility}.}

{\color{black} The second part  employs a reduction from the Clique Interdiction problem, defined as follows:

\noindent \textbf{Clique Interdiction Problem:} Given a graph $G=(V,E)$ and integers $q, p$, is there a subset $S \subseteq V$ of at most $q$ vertices such that the induced subgraph $G[V \setminus S]$ does not contain a clique of size $p$? This problem is known to be $\Sigma_2^P$-complete \citep{rutenburg1994propositional}.
}

\vspace{2mm}
\proof{\textit{Proof of Theorem \ref{thm:hardness2}.}}
\underline{\bf First part:}
Consider TSRMB under an implicit model of uncertainty where the sets of second-stage riders is given by the budgeted of uncertainty set:
$ \mathcal{S} = \{ S \subset R_2 \mid |S | \leq k \}.$ 
Consider an instance of the Clique problem with a graph $G=(V,E)$ and an integer $p$. We construct the corresponding TSRMB instance as follows. Let $k= {p \choose 2}$ be the maximum size of each scenario in $\mathcal{S}$.  
\begin{itemize}
    \item The set of first-stage riders $R_1=\emptyset$.
    \item The set $R_2$ consists of $|E|$ riders, each corresponding to an edge in $G$. Every scenario $S \subseteq R_2$ has a size at most $k$.
    \item The set of drivers $D$ comprises $|V| + k - p - 1$ drivers, divided into two categories: $D = A \cup B$. The set $A$ contains $|V|$ drivers corresponding to the vertices in $V$, and set $B$ includes the remaining $k - p - 1$ drivers. 
\end{itemize}
The distances between riders and drivers are defined as follows:
\begin{itemize}
    \item For $(i,e) \in B \times R_2$, $d(i,e) = 1$.
    \item For $(i,e) \in A \times R_2$, $d(i,e) = \left\{
    \begin{array}{ll}
       1 & \text{if vertex } i \text{ is {\color{black} incident} to edge } e \text{ in } G, \\
       3 & \text{otherwise.}
    \end{array}
\right.$
\item For $i, i' \in D,$ $ d(i, i') = \min_{e \in R_2} \{ d(i,e) + d(i',e) \}$.
    \item For $e, e' \in R_2,$ $ d(e, e') = \min_{i \in D} \{ d(i,e) + d(i,e') \}$.
\end{itemize}
This configuration of distances results in a metric graph. Note that all edges in the bipartite graph between riders $R_2$ and drivers $D$ have a distance cost of either $3$ or $1$. The first-stage cost is $0$ because $R_1=\emptyset$, so the objective value of TSRMB corresponds to the second-stage cost. Given that the cost of the objective function in the second stage is a bottleneck matching, the optimal objective value of problem TSRMB is either $1$ or $3$. In the next Lemma, we will demonstrate that the objective value is $3$ if and only if there exists a clique of size $p$ in the graph $G$.
\begin{lemma} \label{lem:cabo}
    The objective value of TSRMB is $3$ if and only if there exists a clique of size $p$ in the graph $G$.
\end{lemma}

\proof{\textit{Proof of Lemma \ref{lem:cabo}.}}
Suppose there exists a clique $C$ of size $p$ in $G=(V,E)$. All vertices in $C$ are adjacent, forming ${p \choose 2} = k$ edges. Let $S \subseteq R_2$ represent riders corresponding to these $k$ edges. Note that $S \in \mathcal{S}$ is a feasible scenario for TSRMB as its size equals $k$. These $k$ edges are {\color{black} incident} only to vertices of the clique $C$. Hence, any vertex in $V \setminus C$ is not {\color{black} incident} to any edge in $S$.
This implies that the distance between any rider $e \in S$ and a driver in $V \setminus C$ is $3$. 
The size of $B$ is $k - p - 1$, and the size of scenario $S$ is $k$, so at least $p + 1$ of the riders in $S$ need to be matched with drivers from $A$. Note that $A= C  \cup \{ V \setminus C\}$ and $|C|=p$. So, at least one rider in $S$ needs to {\color{black} be} matched to a driver in $V \setminus C$. We showed that {\color{black}the distance} between any rider in $S$ and any driver in $V \setminus C $ is equal to 3. Thus, any matching between $S$ and drivers from $R_2$ will have at least one edge with a distance of $3$. Consequently, the bottleneck cost of matching $S$ is $\text{cost}_2(D, S) = 3$. We have established the existence of one scenario with a cost of $3$. Therefore, the optimal objective value of TSRMB is $3$.

\red{
Conversely, suppose the optimal objective value 
is $3$. 
This means that if we consider only the edges of distance $1$ and remove the edges of distance $3$ in the bipartite graph between drivers and riders, we cannot find a perfect matching for every scenario $S \in \mathcal{S}$.  Consider $U$ the unweighted bipartite graph between drivers and riders, where we keep only the edges of distance $1$ and remove those with distance $3$. By Hall's Theorem, this implies the existence of $S \subseteq R_2$ such that $|S| \leq k$ and $|S| > |N(S)|$, where $N(S)$ denotes the neighbors of $S$ in the graph $U$. Let $\mathcal{E}$ be the set of edges in $G$ corresponding to the scenario $S$. Take a rider in $S$; it has a distance of $1$ with all drivers in $B$ and with the two drivers corresponding to vertices in $A$ that are {\color{black} incident} to the edge that represents the rider. {\color{black} Hence, the rider has $|B| + 2$ neighbors in the graph $U$} where $|B| + 2 = k - p + 1 = \frac{p(p-1)}{2} - p + 1 = {p-1 \choose 2}$. Therefore, $ |N(S)| \geq {p-1 \choose 2}$, implying that $|S| \geq {p-1 \choose 2} + 
1$. Note that, in general, the maximum number of edges that can be formed with $p - 1$ vertices is ${p-1 \choose 2}$. Since $|S| \geq {p-1 \choose 2} + 1$, it follows that the number of vertices that are used to form the edges in $\mathcal{E}$, is at least $p$. Therefore, the size of the neighborhood of $S$ in $U$ is at least $p + ( k - p - 1) = k - 1$. Hence, $|S| > |N(S)| \geq k - 1$, which implies $|S| \geq k$. Given that $|S| \leq k$, we conclude that $|S| = k$ and $N(S)=k-1$.  In the graph $U$, the neighbors of $S$ belonging to the set $C$ are of size $k-p-1$, so the  neighbors of $S$ belonging to the set $A$ are of size $p$. Recall, by definition, that the neighbors of $S$ belonging to the set $A$ correspond to vertices from $V$ that are {\color{black} incident} in the graph $G$ to the edges in  $\cal E$.
This means that we have $p$ vertices in $V$ that  form $k = {p \choose 2}$ edges. Consequently, there is a clique of size $p$ in the graph $G$.}

\hfill \Halmos
\endproof

To conclude, as the decision problem of finding a clique of a given size is NP-complete, determining whether the cost TSRMB is $1$ or $3$ is also NP-complete. Hence, unless NP=P, it is NP-hard to approximate TSRMB within a factor better than $3 - \epsilon$ for any $\epsilon > 0$.

\vspace{3mm}

{\color{black}
\underline{\bf Second part:}
Now we prove that TSRMB with budgeted uncertainty is $\Sigma_2^P$-hard. Consider an instance of the Clique Interdiction problem, where we are given a graph $G = (V, E)$ and integers $q$ and $p$. The Clique Interdiction problem asks whether there exists a subset $S \subseteq V$ of at most $q$ vertices such that the induced subgraph $G[V \setminus S]$ contains no clique of size $p$.

We construct a corresponding instance of TSRMB with budgeted uncertainty such that the optimal TSRMB cost is exactly 2
 if and only if the answer to the Clique Interdiction problem is YES.
Let $k = {p \choose 2}$ be the size of the largest scenario in the budgeted uncertainty set.
We define the TSRMB instance as follows:

\begin{itemize}
    \item The set of first-stage riders $R_1$ contains $|V| - q$ riders.
    \item The set of second-stage riders $R_2$ contains $|E|$ riders, one for each edge in $G$. Each scenario $S \subseteq R_2$ has size at most $k = {p \choose 2}$.
    \item The set of drivers $D$ comprises $2|V| + k - p - 1$ drivers, divided into three disjoint subsets:
    \begin{itemize}
        \item $A$, containing $|V|$ drivers corresponding to the vertices in $V$.
        \item $B$, also containing $|V|$ drivers corresponding to the vertices in $V$.
        \item $C$, a set of $k - p - 1$ dummy drivers.
    \end{itemize}
\end{itemize}

The distances between riders and drivers are defined as follows:
\begin{itemize}
    \item For $(i,e) \in A \times R_1$, $d(i,e) = 1$.
    \item For $(i,e) \in \{B \cup C\} \times R_1$, $d(i,e) = 3$.
    \item For $(i,e) \in \{A \cup B\} \times R_2$, $d(i,e) = \left\{
    \begin{array}{ll}
       1 & \text{if vertex } i \text{ is incident to edge } e \text{ in } G, \\
       3 & \text{otherwise.}
    \end{array}
    \right.$
    \item For $(i,e) \in C \times R_2$, $d(i,e) = 1$.  
    \item For $i, i' \in D$, $d(i, i') = \min_{e \in R_1 \cup R_2} \{ d(i,e) + d(i',e) \}$.
    \item For $e, e' \in R_1 \cup R_2$, $d(e, e') = \min_{i \in D} \{ d(i,e) + d(i,e') \}$.
\end{itemize}
The above distances define a metric graph. 
We now prove the following key equivalence:

\begin{lemma} \label{lem:interdiction}
    The TSRMB instance has cost exactly 2 if and only if the answer to the Clique Interdiction problem is YES.
\end{lemma}


\proof{\textit{Proof of Lemma \ref{lem:interdiction}.}}
Suppose the answer to the Clique Interdiction problem is YES. That is, there exists a subset of vertices \( X \subseteq V \), with \( |X| = q \), such that the induced subgraph \( G[V \setminus X] \) contains no clique of size \( p \). Let \( X \subseteq A \) be the drivers in \( A \) corresponding to the vertices in \( X \), and define \( D_1 = A \setminus X \). Since \( |R_1| = |V| - q = |D_1| \), this yields a valid first-stage matching. All edges between \( A \) and \( R_1 \) have distance 1, so the first-stage cost is 1.

We now argue that the second-stage cost is also 1. Suppose for contradiction that the second-stage bottleneck cost is 3. This means that there exists a scenario \( S' \subseteq R_2 \), with \( |S'| \leq k = {p \choose 2} \), such that no perfect matching from \( S' \) to the remaining drivers (i.e., \( B \cup C \cup X \)) exists using only distance-1 edges.
Let \( U \) be the unweighted bipartite graph induced by distance-1 edges between riders in \( S' \) and the drivers in \( B \cup C \cup X \). By Hall's theorem, there exists a subset \( S \subseteq S' \) such that \( |N(S)| < |S| \). Consider any rider \( e \in S \); it corresponds to an edge in \( G \), say \( (i, j) \). Then \( e \) is connected by distance-1 edges to:
\begin{itemize}
    \item the two drivers in \( B \) corresponding to \( i \) and \( j \),
    \item the drivers in \( X \subseteq A \) corresponding to \( i \) if $i \in X$ and/or \( j \) if $j \in X$,
    \item all drivers in \( C \) (since these are connected to every rider in \( R_2 \) with distance 1).
\end{itemize}




Hence, each rider in $S$ has at least $|C| + 2$ neighbors in the graph $U$ where
$
|C| + 2 = k - p + 1 = \frac{p(p-1)}{2} - p + 1 = {p-1 \choose 2}.
$
Therefore, $|N(S)| \geq {p-1 \choose 2}$, which implies that 
$
|S| \geq {p-1 \choose 2} + 1.
$
Note that, in general, the maximum number of edges that can be formed with $p - 1$ vertices is ${p-1 \choose 2}$. Since $|S| \geq {p-1 \choose 2} + 1$, it follows that the number of vertices used to form the edges in $S$ must be at least $p$. Thus, the size of the neighborhood of $S$ in $U$ is at least 
$
p + (k - p - 1) = k - 1,
$
where the $p$ neighbors come from the set $B$ and $k - p - 1$ from the set $C$. So we have $|N(S)| \geq k-1$. We also note that the neighbors of $S$ in $U$ cannot include any node from the set $X$, because otherwise we would have $|N(S)| \geq k$, which would imply   $|S| \geq k + 1$, contradicting the fact that $|S| \leq k$. Therefore, we conclude that $|N(S)| = k - 1$ and $|S| = k$.
In the graph $U$, the neighbors of $S$ in the set $C$ have size $k - p - 1$, so the neighbors of $S$ in the set $B$ must have size $p$. Recall that the neighbors of $S$ in $B$ correspond to vertices from $V$ that are incident in the graph $G$ to the edges in $S$. This means we have $p$ vertices in $V$ that form 
$
k = {p \choose 2}
$
edges. Moreover, since $S$ has no neighbors in $X$, it follows that the vertices used in $S$ have no intersection with $X$. Consequently, there exists a clique of size $p$ in the graph $G[V \setminus X]$, contradicting our assumption that no $p$-clique exists in $G[V \setminus X]$.
Therefore, the second-stage cost of our solution is $1$, and the total cost is $2$. It is easy to see that any feasible solution to TSRMB incurs a total cost of at least $2$: the first-stage cost is at least $1$ (since all first-stage edges have cost at least $1$), and similarly, the second-stage cost is also at least $1$. Hence, we conclude that the optimal cost of TSRMB is $2$.

\medskip
Conversely, suppose the optimal cost of TSRMB is 2. Then both the first-stage and second-stage costs must be 1. Since only drivers in \( A \) are connected to \( R_1 \) via distance-1 edges, the first-stage matching must use only drivers in \( A \). Let \( D_1 \subseteq A \) be the drivers used in the first stage, with \( |D_1| = |R_1| = |V| - q \), and define \( X = A \setminus D_1 \). Then \( |X| = q \), and \( X \) is our candidate clique interdiction set.
Suppose for the sake of contradiction that \( G[V \setminus X] \) contains a clique \( K \) of size \( p \). Then \( K \) induces \( k = {p \choose 2} \) edges, and we define \( S \subseteq R_2 \) as the set of riders corresponding to these edges. This is a valid scenario.
Each edge in \( S \) connects only to vertices in \( K \subseteq V \setminus X \). The drivers in \( A \) corresponding to \( K \) are not available in the second stage since they were used in the first-stage matching. Thus, the riders in \( S \) must be matched to drivers in \( B \cup C \cup (A \setminus D_1) = B \cup C \cup X \).
Since \( |C| = k - p - 1 \), at most \( k - p - 1 \) riders in \( S \) can be matched to \( C \) with cost 1. The remaining \( p + 1 \) riders must be matched to drivers in \( B \cup X \). But \( |K| = p \), so at most \( p \) drivers in \( B \) are incident to edges in \( S \), and the vertices in \( X \) are not incident to any edge in \( S \). Thus, at least one (edge) rider must be matched with a driver (vertex) that is not incident to the edge, thus it has a cost distance 3, contradicting the assumption that the second-stage bottleneck cost is 1.
Hence, \( G[V \setminus X] \) contains no clique of size \( p \), and the answer to the Clique Interdiction problem is YES.
\hfill \Halmos
\endproof

We conclude that deciding whether the cost of TSRMB is exactly 2 is equivalent to solving the Clique Interdiction problem, which is $\Sigma_2^P$-complete. Hence, TSRMB with budgeted uncertainty is $\Sigma_2^P$-hard.
\hfill \Halmos
\endproof

}


\color{black}

\subsection{Proof of Theorem \ref{thm:hardness3}}

In order to show Theorem \ref{thm:hardness3}, we employ a reduction from the set cover problem, defined as follows:

\noindent\textbf{Set Cover Problem:} Given a set of elements $\mathcal{U} = \{1,2,...,n\}$ (called the universe), a collection $S_1, \ldots, S_p$ of $p$ sets whose union equals the universe and an integer $q$.\\
Question: Is there a set $C \subset \{1, \ldots, p\}$ such that $|C| \leq q$ and $\bigcup\limits_{i \in C} S_i = \mathcal{U}$ ?
This decision problem is known to be NP-complete \citep{feige1998threshold}.

\vspace{2mm}
\proof{\textit{Proof of Theorem \ref{thm:hardness3}.}}
Consider an instance of the decision problem of set cover with $n$ elements, $p$ sets and an integer $q$. We construct an instance of the TSRMB problem as follows.

\begin{itemize}
    \item Let $r $ be a very large integer. Create a \textit{dummy location}, where we place $r$ drivers and $r$ first-stage riders. This dummy location is very far from the rest of the graph.
    \item In addition to the $r$ drivers at the dummy location, create $p$ drivers $D = \{1, \ldots, p\}$. For each $j \in \{1, \ldots, p\}$, driver $j$ corresponds to  set $S_j$. Thus, the total number of drivers is $|D| =  r +p $.
    \item  In addition to the $r$ first-stage riders at the dummy location, create $p - q$ first-stage riders, denoted by $\hat{R}_1= \{1, \ldots, p-q\}$. Thus, the total number of first-stage riders is $|R_1| = |\hat{R}_1| + r = p - q + r$.
    \item Create $n$ second-stage riders, $R_2 = \{1, \ldots, n\}$ and set $\mathcal{S} = \{ \{1\}, \ldots, \{n\}\}$. Every second-stage scenario is of size $k=1$.
\end{itemize}

The dummy location, containing $r$ drivers and $r$ first-stage riders, is very far from the rest of the graph, i.e., very far from  $\hat{D}$, $\hat{R}_1$ and $R_2$. This ensures that the $r$ drivers in the dummy location will match exclusively with the $r$ first-stage riders there, incurring a cost of $0$. The distances in the rest of the graph are defined as follows:

\begin{itemize}
    \item  For $(i,j) \in \hat{R}_1 \times \hat{D}$, $d(i,j) = 1$.
    \item For $(i,j) \in R_2 \times \hat{D}$, $d(i,j) = \left\{
    \begin{array}{ll}
       1 & \mbox{ if } i \in S_j \\
       3 & \mbox{ otherwise.}
    \end{array}
\right.$
\item For $i,i' \in \hat{R}_1\cup R_2,$ $ d(i,i') = \min\limits_{j\in \hat{D}} d(i,j) + d(i',j)$.
    \item For $j,j' \in \hat{D},$ $ d(j,j') = \min\limits_{i\in \hat{R}_1\cup R_2} d(i,j) + d(i,j')$.
\end{itemize}

This choice of distances induces a metric graph. First, as mentioned earlier, the $r$ first-stage riders in the dummy location are matched at a cost of $0$ since they are all at the same location as the $r$ drivers. The remaining first-stage riders in $\hat{R}_1$ are within a distance of $1$ from all drivers in $\hat{D}$. Thus, in any feasible solution, all riders in $\hat{R}_1$ will be matched at a distance cost of $1$. Therefore, the first-stage cost of any feasible solution to TSRMB, which is the average weight matching cost, is given by:
\[
\frac{1}{r + |\hat{R}_1|} ( |\hat{R}_1| \times 1+ r \times 0)= \frac{|\hat{R}_1|}{r + |\hat{R}_1|} = \frac{p - q}{r + p-q}.
\]
Fix any $\epsilon >0$ infinitely small. By choosing $r$ very large, we can ensure  that $\frac{p - q}{r + p - q} = \epsilon$ . Hence, the first-stage cost of any feasible solution is equal to $\epsilon$.

Next, we show that a set cover of size $ q$ exists if and only if there is a TSRMB solution with second-stage cost equal to $1$. Suppose without loss of generality that $S_1, \ldots, S_q$ is a set cover. Then by using the drivers $\{1, \ldots, q\}$ in the second stage, we ensure that every scenario is matched with a cost of $1$. This implies the existence of a solution with second-stage cost equal to $1$. Now suppose there is a solution to the TSRMB problem with second-stage cost equal to 1. Let $D_2$ be the set of second-stage drivers of this solution, then we have $|D_2| = q$. We claim that the sets corresponding to drivers in $D_2$ form a set cover. In fact, since the second-stage cost of the TSRMB solution is equal to 1. this means that for every scenario $i \in \{1, \ldots, n\}$, there is a driver $j \in D_2$ within a distance  1 from $i$. Therefore $i \in S_j$ and $\{S_j :  j \in D_2\}$ is a set cover of size $q$.

If no set cover of size $q$ exists, at least one rider in the second stage must be matched with a driver at a distance of $3$. Consequently, the second-stage bottleneck cost of TSRMB has to be 3 and the total cost of TSRMB is $3 + \epsilon$. Therefore, we conclude that the total cost of TSRMB is $1+\epsilon$ if a set cover of size $q$ exists and $3+\epsilon$ if no set cover of size $q$ exists.

As the decision problem of finding a set cover of a given size is NP-complete, determining whether the cost of TSRMB is $1 + \epsilon$ or $3 + \epsilon$ is also NP-complete. Therefore, unless NP = P, it is NP-hard to approximate TSRMB within a factor better than $3 - \epsilon'$ for any $\epsilon' > 0$.
\hfill \Halmos
\endproof

\color{black}

{\color{black}

\section{Connection between    Chromatic \(k\)-Supplier Problem and TSRMB with Explicit Scenarios} \label{apx:tamim}

In this section, we formalize the connection between the \emph{chromatic \(k\)-supplier problem} and TSRMB with \(p\) explicit scenarios. In particular, we present a reduction from the chromatic \(k\)-supplier problem to TSRMB, showing that any constant-factor approximation algorithm for TSRMB would imply a constant-factor approximation for the chromatic \(k\)-supplier problem, under the assumption that the ratio between the maximum and minimum  distances in the metric is bounded by a polynomial in the input size.
The chromatic \(k\)-supplier problem was introduced recently by~\cite{goyal2023tight}, who listed it as an open problem. Currently, no constant-factor approximation guarantees are known for this problem. This reduction provides further evidence that designing constant-factor approximation algorithms for TSRMB is a challenging task, as TSRMB is at least as hard as the chromatic \(k\)-supplier problem. 
Let us define the chromatic \(k\)-supplier problem.

\vspace{2mm}
\noindent
{\bf Chromatic \(k\)-Supplier Problem (Soft Assignment Version)}.
We consider the soft-assignment version of the chromatic \(k\)-supplier problem, defined by ~\cite{goyal2023tight}, in which suppliers can be assigned to multiple clients.
The input consists of a set of clients and suppliers located in a metric space. Each client belongs to a unique color class, and there are \(p\) color classes in total. A set of \(m\) potential supplier locations is given, along with an integer \(k\) specifying how many suppliers can be opened (possibly with repetitions, i.e., the same location may be selected multiple times).
The goal is to open at most \(k\) suppliers and assign each client to one of the opened suppliers such that the \emph{chromatic constraint} is satisfied: no two clients of the same color are assigned to the same supplier. The objective is to minimize the maximum distance between any client and the supplier to which they are assigned.

\vspace{2mm}
\noindent
{\bf Reduction to TSRMB with Explicit Scenarios.}
 Given a chromatic \(k\)-supplier instance with \(m\) supplier locations and $n$ clients. Define the aspect ratio as $D_{\max}/D_{\min}$ where $D_{\max}$ is the maximum distance in the metric and $D_{\min}$ is the minimum non-zero distance in the metric. We assume that $D_{\max}/D_{\min} $ is upper bounded by a polynomial in the input, i.e., a polynomial of $n$ and $m$. We construct the corresponding TSRMB instance as follows.

Let \(r\) be a large integer, chosen such that \(r \gg 1\) (will be specified later). For each supplier location \(j \in [m]\), we introduce:
\begin{itemize}
    \item \(r\) drivers, all located at \(j\);
    \item \(r - 1\) first-stage riders, also located at \(j\).
\end{itemize}

Additionally, choose an arbitrary supplier location \(j_0 \in [m]\), and place \(m - k\) additional first-stage riders at location \(j_0\). Thus, the total number of first-stage riders is \(m(r - 1) + (m - k) = mr - k\), and the total number of drivers is \(mr\).

For each color \(i \in [p]\), introduce a second-stage scenario \(S_i\) consisting of all clients (riders) of color \(i\). These form the explicit scenario set \(\mathcal{S} = \{S_1, \dots, S_p\}\).

We now explain why the above construction reduces the chromatic \(k\)-supplier problem to an instance of TSRMB.
In the constructed TSRMB instance, we have a total of \(mr\) drivers and \(mr - k\) first-stage riders. To serve all first-stage riders, we will use exactly \(mr - k\) drivers, leaving \(k\) drivers unused for the second stage.
Due to the nature of the TSRMB objective, which combines an average-cost matching in the first stage and a bottleneck cost matching in the second stage, the \(k\) drivers reserved for the second stage can be any $k$ drivers from the full pool. Among the \(mr - k\) riders  in the first stage, most will be matched locally to a driver at the same location, incurring zero cost. The only exceptions are the \(m - k\) riders added at the extra location \(j_0\), which do not have enough co-located drivers; and up to \(k\) additional riders, where the corresponding local drivers were reserved for the second stage. Thus, at most \(m\) riders may be matched to non-local drivers. The first-stage average matching cost is therefore bounded by
$
m \cdot D_{\max} /(mr - k).
$
By taking \(r\) sufficiently large, this quantity can be made arbitrarily small. In particular, for $\epsilon>0$, we  want $r$ such that $m \cdot D_{\max} /(mr - k) \leq \epsilon \cdot D_{\min}$, so we choose  $r = \frac{k}{m}+ \frac{1}{\epsilon}\frac{D_{\max}}{D_{\min} }$. Note that $r$ is polynomial in the input of the problem because of our assumption on the aspect ratio, so our reduction is a approximation preserving polynomial time reduction.
As a result, the first-stage cost of TSRMB is bounded by $\epsilon D_{\min} \leq \epsilon \cdot OPT_2$, where $OPT_2$ denotes the optimal second-stage cost. This implies that the total TSRMB objective is dominated by the second-stage term.
Hence, the TSRMB instance effectively reduces to the problem of selecting \(k\) drivers (i.e., \(k\) supplier locations) to be reserved for the second stage, in order to minimize the worst-case bottleneck matching cost. In each scenario \(S_i\), which corresponds to all clients of color \(i\), each rider must be matched to one of the \(k\) available drivers. Moreover, since these riders all share the same color, the TSRMB constraint that no two riders in a scenario are matched to the same driver enforces the chromatic constraint of the original problem: no two clients of the same color may be assigned to the same supplier.

Therefore, solving the TSRMB instance yields a feasible solution to the chromatic \(k\)-supplier problem: the \(k\) drivers reserved for the second stage define the set of supplier locations, and the bottleneck cost incurred in the worst-case scenario defines the chromatic clustering cost. Let $OPT$ denote the optimal value of the chromatic \(k\)-supplier instance, and $OPT_1 + OPT_2$ be the optimal value of the corresponding TSRMB instance. Then,
\[
OPT_2 \leq OPT_1 + OPT_2 \leq \epsilon D_{\min} + OPT \leq (1 + \epsilon) \cdot OPT,
\]
since $OPT \geq D_{\min}$ and the first-stage cost is at most $\epsilon D_{\min}$. Thus, the second-stage cost of the TSRMB solution, which defines the clustering cost, is at most a \((1 + \epsilon)\)-factor approximation to the optimal chromatic \(k\)-supplier cost.
In conclusion, any \(\rho\)-approximate solution to TSRMB yields a \((1 + \epsilon)\rho\)-approximate solution to chromatic \(k\)-supplier. Therefore, an \(O(1)\)-approximation algorithm for TSRMB implies an \(O(1)\)-approximation algorithm for the chromatic \(k\)-supplier problem.

}

\section{Proofs of Section \ref{subsection:mathilde} }\label{appendix:implicit}


\proof{\textit{Proof of Claim \ref{lemma:ball}}.} $ $
First, in the optimal solution of the original problem, 
    $R_1$ is matched to a subset $D_1^*$ of drivers. The scenario $S_1$ is matched to a set of drivers $D_{S_1}$ where $D_1^* \cap D_{S_1} = \emptyset$. Let $D_o$ be the set of drivers that are matched to $o_1, \ldots, o_j^*$ in a scenario that contains $o_1, \ldots, o_j^*$. It is clear that $D_1^* \cap D_o = \emptyset.$  We claim that $D_o \cap D_{S_1} = \emptyset$. In fact, suppose there is a driver $\rho \in D_o \cap D_{S_1}$. This implies the existence of some $o_j$ with $j \leq j^*$ and some rider $r \in S_1$ such that $d(\rho, o_j) \leq OPT_2$ and $d(\rho, r) \leq OPT_2$. But then $d(\delta, o_j) \leq d(\delta,r) + d(\rho, r) + d(\rho, o_j) \leq 3OPT_2$ which contradicts the fact the $o_j$ is an outlier. Therefore $D_o \cap D_{S_1} = \emptyset$. We show that $D_1^*$ is a feasible first-stage solution to the single scenario problem of $S_1 \cup \{o_1, \ldots o_j^*\}$ with a cost {\color{black}at most} $OPT_1 + OPT_2$. In fact, $D_1^*$ can be matched to $R_1$ with a cost {\color{black}at most} $OPT_1$, $D_{S_1}$ to $S_1$ and $D_o$ to  $\{o_1, \ldots, o_j^*\}$ with a cost {\color{black}at most} $OPT_2$. Therefore $\Omega_{j^*} + \Delta_{j^*} \leq OPT_1 + OPT_2.$

Second, recall that $cost_1\big(D_1(j^*),R_1\big)  = \Omega_{j^*} $. Consider a scenario $S$ and a rider $r \in S$. Let $\mathcal{B'}$ be the set of the $n-\ell$ closest second-stage riders to $\delta$. Let $D_{S_1}(j^*)$ be set of second-stage drivers matched to $S_1$ in the single scenario problem for scenario $S_1 \cup \{o_1, \ldots, o_{j^*}\}$. Let $D_o(j^*)$ be the set of second-stage drivers matched to $\{o_1, \ldots, o_{j^*}\}$ in the single scenario problem for scenario $S_1 \cup \{o_1, \ldots, o_{j^*}\}$. Recall that the second-stage cost for this single scenario problem is $\Delta_{j^*}$. We distinguish three cases:
    \begin{enumerate}
        \item If $r \in \mathcal{B'}$, then by Lemma \ref{lemma:center}, $r$ is connected to every driver in $D_{S_1}(j^*)$ within a distance {\color{black}at most} $\Delta_{j^*} + 4OPT_2$.
         \item If $r \in \{o_{j^*+1}, \ldots, o_{\ell}\}$, then $r$ is connected to every driver in $D_{S_1}(j^*)$ within a distance {\color{black}at most}
         $3OPT_2$ + $OPT_2 + \Delta_j^*$.
        \item If $r \in \{o_1, \ldots, o_{j^*}\}$ (i.e., $r$ an outlier), then $r$ can be matched to a different driver in $D_o(j^*)$ within a distance  {\color{black}at most} $OPT_2$.
    \end{enumerate}
    This means that in every case, we can match $r$ to a driver in $D\setminus D_1(j^*)$ with a cost {\color{black}at most} $4OPT_2 + \Delta_{j^*}$. This implies that 
    $$  \max\limits_{S \in \mathcal{S}} cost_2\big(D\setminus D_1(j^*), S\big) \leq 4OPT_2 + \Delta_{j^*}$$
    and therefore $$ \Omega_{j^*} + \max\limits_{S \in \mathcal{S}} cost_2\big(D\setminus D_1(j^*), S\big)  \leq \Omega_{j^*} + \Delta_{j^*} + 4OPT_2 \leq OPT_1 + 5OPT_2. $$
\hfill \Halmos
\endproof

\proof{\textit{Proof of Claim \ref{lemma:betaj}}.} Let $\alpha_j$ be the second-stage cost of $D_1(j)$ on the TSRBM instance with scenarios $S_1$ and $S_2$. Formally, $\alpha_j = \max\limits_{S \in \{S_1,S_2\}} cost_2\big(D\setminus D_1(j), S\big)$.  Therefore $\beta_j = \Omega_j + \alpha_j$. Let's consider the two sets
\begin{alignat*}{3}
    O_1 & =  \{ r \in \{o_1, \ldots, o_{\ell}\} \ | \ d(r,\delta) > 2\alpha_j + OPT_2\}.\\
    O_2 & =  \{o_1, \ldots, o_{\ell}\} \setminus O_1.
\end{alignat*}

Consider  $D_1(j)$ as a first-stage decision to TSRMB with scenarios $S_1$ and $S_2$. Let $\Tilde{D}_1 \subset D \setminus D_1(j)$ be the set of drivers that are matched to $O_1$ when the scenario $S_2 = \{o_1, \ldots, o_{\ell}\}$ is realized. Similarly, let $\Tilde{D}_2 \subset D \setminus D_1(j)$ be the drivers matched to scenario $S_1$. We claim that $\Tilde{D}_1 \cap \Tilde{D}_2 = \emptyset$. Suppose that there exists some driver $\rho \in \Tilde{D}_1 \cap \Tilde{D}_2$, this implies the existence of some $o \in O_1$ and $r \in S_1$ such that $d(\rho, o) \leq \alpha_j$ and $d(\rho, r) \leq \alpha_j$. And since $d(r,\delta)\leq OPT_2$ by definition of $\delta$ we would have
\begin{equation*}
d(o, \delta) \leq d(\rho, o) + d(\rho, r) + d(r,\delta) \leq 2\alpha_j + OPT_2,\end{equation*}
which contradicts the definition of $O_1$. Therefore $\Tilde{D}_1 \cap \Tilde{D}_2 = \emptyset$.

Now consider a scenario $S \in \mathcal{S}$. The riders of $S\cap O_1$ can be matched to $\Tilde{D}_1$ with a bottleneck cost at most $\alpha_j$. Recall that by Lemma \ref{lemma:center}, any rider in $R_2 \setminus  \{o_1, \ldots, o_{\ell}\}$ is within a distance at most $4OPT_2$ from any rider in $S_1$. The riders $r \in S \setminus \{o_1, \ldots, o_{\ell}\}$ can therefore be matched to any driver  $\rho \in \Tilde{D}_2$ within a distance at most 
\begin{equation*}
    d(r, \rho) \leq d(r, S_1) + d(S_1, \rho) \leq 4 OPT_2 + \alpha_j.
\end{equation*}
As for riders $r \in S \cap O_2$, they can also be matched to any driver $\rho$ of $\Tilde{D}_2$ within a distance at most 
\begin{equation*}
    d(r, \rho) \leq d(r, \delta) + d(\delta, S_1) + d(S_1, \rho) \leq 2\alpha_j + OPT_2 + OPT_2 +  \alpha_j = 3\alpha_j + 2OPT_2.
\end{equation*}
Therefore we can bound the second-stage cost \begin{equation*}
    \max\limits_{S \in \mathcal{S}} cost_2\big(D\setminus D_1(j), S\big) \leq \max\{\alpha_j + 4OPT_2, 3\alpha_j + 2OPT_2\} 
\end{equation*}
and we get that
\begin{equation*}
    cost_1\big(D_1(j),R_1\big) + \max\limits_{S \in \mathcal{S}} cost_2\big(D\setminus D_1(j), S\big) \leq \max\{\beta_j + 4OPT_2, \ 3\beta_j + 2OPT_2\}.
\end{equation*}
The other inequality $\beta_j \leq cost_1\big(D_1(j),R_1\big) + \max\limits_{S \in \mathcal{S}} cost_2\big(D\setminus D_1(j)\big)$ is trivial. 
\hfill \Halmos
\endproof

\color{black}

\section{Arbitrary Surplus with $k=1$}\label{appx:kone}

\proof{\textit{Proof of Theorem \ref{thm:koneimplicit}}.}
Fix an optimal solution to the TSRMB problem, with the first-stage cost denoted by $OPT_1$ and the second-stage cost denoted by $OPT_2$.
In Algorithm~\ref{alg:k1}, we constructed a maximal set $S$ of second-stage riders from $R_2$ such that the riders in $S$ are at a distance greater than $2 OPT_2$ from one another, and for each rider in $R_2 \setminus S$, there exists a rider in $S$ within a distance of at most $2 OPT_2$.

Each rider in $S$ constitutes a second-stage scenario in our TSRMB problem with $k=1$. The key observation is that these second-stage riders in $S$ cannot be served by the the same drivers in the optimal solution. That is, the optimal solution to the TSRMB problem must assign a distinct driver to each rider in $S$ to be used in the second stage. Otherwise, if two riders are served by the same driver, then by the triangle inequality, the distance between them would be less than or equal to $2 OPT_2$, which contradicts the definition of $S$.

Now, consider the problem TSRMB-1-Scenario($R_1, S, D$). We claim that the optimal solution $D_1$ to this problem has a cost (in this one scenario problem) no greater than $OPT_1 + OPT_2$. To see this, we construct a feasible  solution with cost $OPT_1 + OPT_2$. In particular, let $D_1^*$ be an optimal solution for our original TSRMB problem with $k=1$. Let  use $D_1^*$ to serve  the first-stage riders in $R_1$. As shown above, we know that there exist $|S|$ distinct drivers in $D \setminus D_1^*$, each of whom can serve a rider in $S$ within a distance of at most $OPT_2$. That is, we can construct a matching for the scenario $S$ using the drivers $D \setminus D_1^*$ with a bottleneck cost of at most $OPT_2$, and a matching for $R_1$ using the drivers $D_1^*$ with an average matching cost of $OPT_1$. Therefore, we have
$$ cost_1(D_1, R_1) + cost_2(D \setminus D_1, S)  \leq OPT_1 + OPT_2.$$
Finally, note that the second-stage riders in the original TSRMB problem are either in $S$ or within a distance of at most $2 OPT_2$ from some rider in $S$. Therefore, by the triangle inequality, each rider in $R_2$ can be matched to a driver in $D \setminus D_1$ with a cost of at most: $cost_2(D \setminus D_1, S) + 2 OPT_2.$
Thus, the total cost of our solution for TSRMB is at most:
$$cost_1(D_1, R_1) + cost_2(D \setminus D_1, S) + 2 OPT_2 \leq OPT_1 + 3 OPT_2. $$
This concludes the proof.

\hfill \Halmos
\endproof

\color{black}

\section{Two-Stage Stochastic Bottleneck Matching Problem (TSSMB)}\label{appendix:stochastic}
\subsection{Problem Formulation}
In this section, we consider a variant of the TSRMB  problem with an expected second-stage cost over scenarios instead of a worst-case cost. In particular, we consider a set $R_1$ of first-stage riders which is given first, and must immediately and irrevocably be matched to a subset of drivers $D_1 \subset D$. Once $R_1$ is matched, a scenario $S_i \subset R_2$ is revealed  from a list ${\cal S}= \{ S_1, \ldots, S_q \}
$ with probability $p_i$ and needs to be matched using the remaining drivers. The expected second-stage cost is $\sum\limits_{i=1}^q p_i \cdot cost_2(D\setminus D_1, S_i)$. The objective function is given by
$$ \min\limits_{D_1 \subset D}\Big\{ cost_1(D_1,R_1) + \sum\limits_{i=1}^q p_i \cdot cost_2(D\setminus D_1, S_i)\Big\},$$
where $cost_1(D_1,R_1)$ and $cost_2(D\setminus D_1, S_i)$ are defined similarly to the TSRMB problem.
For brevity of notation, we set $f(D_1) = cost_1(D_1,R_1) + \sum\limits_{i=1}^q p_i \cdot cost_2(D\setminus D_1, S_i)$. Given an optimal first-stage solution $D_1^*$, we denote $OPT_1 = cost_1(D_1^*,R_1)$, $OPT_2 = \sum\limits_{i=1}^q p_i \cdot cost_2(D\setminus D_1^*, S_i)$ and $OPT = OPT_1 + OPT_2$.

{\color{black}
\subsection{Complexity}
\begin{corollary}
There is no $(\frac{5}{3}-\epsilon)$-approximation algorithm for TSSMB for any fixed $\epsilon > 0$, unless $P = NP$.
\end{corollary} 

\proof{\textit{Proof}.} 
The proof follows the same reduction as in the second part of Theorem \ref{thm:hardness_robust}, with the same  instance constructed from a given 3-Dimensional Matching (3-DM) instance. Consider the three second-stage scenarios: \( S_1 = U \), \( S_2 = V \), and \( S_3 = W \), each of size \( k \), and assign equal probabilities \( p_1 = p_2 = p_3 = \frac{1}{3} \). As shown in the proof of Theorem~1, the first-stage cost is \( \epsilon \). 

If a valid 3-dimensional matching exists, each second-stage scenario can be served at bottleneck cost 1, yielding a total expected cost of TSSMB equal to \( \epsilon + \frac{1}{3}\cdot 1 + \frac{1}{3}\cdot 1 + \frac{1}{3}\cdot 1 =1 +\epsilon  \). If no such matching exists, at least one scenario must incur bottleneck cost 3, and the total expected cost of TSSMB becomes at least 
\[
\epsilon + \frac{1}{3} \cdot 1 + \frac{1}{3} \cdot 1 + \frac{1}{3} \cdot 3 = \epsilon + \frac{5}{3}.
\] 
Hence, any algorithm achieving an approximation ratio strictly better than \( \frac{5}{3} - \epsilon \), for some \( \epsilon> 0 \), would distinguish between these two cases and decide 3-DM, implying P = NP. 
\hfill
\Halmos
\endproof
}

\subsection{Algorithm for No Surplus}
Consider the case where there is no surplus of drivers, i.e., the total number of drivers is equal to $|R_1|$ plus the size of the maximum scenario.
We assume for the the sake of simplicity that all scenarios have the same size. The proof follows as well if the sizes are different. We show in this case that we can have a $3$-approximation  by considering every scenario independently to get different first-stage decisions, and then picking the best first-stage decision among them. For every scenario $S_i$, we solve the following problem $$ \min\limits_{D_1 \subset D}\Big\{ cost_1(D_1,R_1) +  cost_2(D\setminus D_1, S_i)\Big\}.$$

When there is no surplus, we know that in the optimal solution, the same set of drivers is matched to every scenario. Therefore if we have a solution for one single scenario, we can use the \red{triangle inequality} to bound the cost of this solution for any scenario.
\begin{algorithm}
	\caption{}
	\label{stochasticnosurplusalgo}
	\begin{algorithmic}[1]

	\FOR{$i \in \{1, \ldots,q\}$}
        \STATE{$D^1_i:=$ TSRMB-1-Scenario$(R_1, S_i, D)$.}	   
	\ENDFOR
	\RETURN{$D_1 = \argmin\limits_{i=1,\ldots,q}f(D^1_i)$}
	\end{algorithmic} 
\end{algorithm}

\begin{theorem}\label{stochasticnosuplusproof}
Algorithm \ref{stochasticnosurplusalgo} yields a solution with total cost at most $OPT_1 + 3 OPT_2$ for TSSMB with no surplus.
\end{theorem}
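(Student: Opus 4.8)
The plan is to exploit the same structural fact that drives the no-surplus robust case (Lemma \ref{implicitnosuplusproof}): when $\ell = 0$ and all scenarios share the common size $k$, the optimal first stage $D_1^*$ leaves exactly $k = |D| - |R_1|$ drivers $D_2^* = D \setminus D_1^*$ for the second stage, and \emph{every} scenario $S_i$ is perfectly matched onto this same driver set $D_2^*$. Writing $c_i^* = cost_2(D\setminus D_1^*, S_i)$ for the optimal bottleneck of scenario $i$, we have $OPT_2 = \sum_{i=1}^q p_i c_i^*$. I would fix this notation first, since the whole argument reduces the stochastic objective to the numbers $c_i^*$ and to the per-scenario solutions $D^1_i$ produced by the algorithm.

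Second, I would control each single-scenario solution. Since $D_1^*$ is a feasible first-stage decision for the single-scenario instance $(R_1, S_i, D)$ with first-stage cost at most $OPT_1$ and second-stage cost $c_i^*$, the exactness of TSRMB-1-Scenario (Lemma \ref{lemma:singlescenario}) gives
\[
v_i := cost_1(D^1_i, R_1) + cost_2(D\setminus D^1_i, S_i) \le OPT_1 + c_i^*.
\]
Writing $b_i = cost_2(D\setminus D^1_i, S_i)$, so that $v_i = cost_1(D^1_i,R_1) + b_i$, the no-surplus hypothesis again forces $D\setminus D^1_i$ to have exactly $k$ drivers, so $S_i$ is perfectly matched onto $D\setminus D^1_i$ with bottleneck $b_i$.

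The crux, and the step I expect to be the main obstacle, is bounding $cost_2(D\setminus D^1_i, S_j)$ for the \emph{other} scenarios $j\neq i$, i.e. certifying that $D^1_i$ is decent for every scenario and not just for $S_i$. Here I would chain three bijections through the common optimal driver set $D_2^*$: the optimal matching of $S_j$ onto $D_2^*$ (edges $\le c_j^*$), the inverse of the optimal matching of $S_i$ onto $D_2^*$ (edges $\le c_i^*$), and the single-scenario matching of $S_i$ onto $D\setminus D^1_i$ (edges $\le b_i$). Their composition is a bijection from $S_j$ to $D\setminus D^1_i$, and the triangle inequality bounds each of its edges by $c_j^* + c_i^* + b_i$; hence $cost_2(D\setminus D^1_i, S_j)\le b_i + c_i^* + c_j^*$ for every $j$. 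Summing against the probabilities and using $\sum_j p_j = 1$ together with $\sum_j p_j c_j^* = OPT_2$ yields
\[
f(D^1_i) \le cost_1(D^1_i,R_1) + b_i + c_i^* + OPT_2 = v_i + c_i^* + OPT_2 \le OPT_1 + 2c_i^* + OPT_2 .
\]

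Finally, I would invoke the averaging that is unavailable in the robust setting: because $OPT_2 = \sum_i p_i c_i^*$ is a convex combination of the $c_i^*$, the smallest term satisfies $\min_i c_i^* \le OPT_2$. Taking the index $i$ achieving this minimum produces a solution of cost at most $OPT_1 + 3OPT_2$, and since Algorithm \ref{stochasticnosurplusalgo} returns the $D^1_i$ minimizing $f$, the returned solution is at least as good, giving $f(D_1)\le OPT_1 + 3OPT_2$. The only delicate point beyond the transfer step is checking that each intermediate composition is genuinely a bijection, which holds because all the sets involved ($S_i$, $S_j$, $D_2^*$, and $D\setminus D^1_i$) have size exactly $k$; this is precisely where the no-surplus and equal-size assumptions enter.
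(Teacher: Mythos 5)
Your proposal is correct and follows the paper's overall strategy: solve the single-scenario instance for each $S_i$, transfer the guarantee to every other scenario $S_j$ by chaining matchings through the common optimal second-stage driver set $D_2^*$ (which exists precisely because $\ell=0$), and return the best of the $q$ candidates. Your cross-scenario bound $cost_2(D\setminus D^1_i, S_j)\le b_i + c_i^* + c_j^*$ is exactly the paper's $\beta_i + b_i + b_j$, and your observation that all the composed maps are bijections between sets of size $k$ is the right justification, which the paper leaves implicit.

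Where you genuinely diverge is the final averaging step, and your route is simpler. The paper keeps the $j=i$ term exact and derives the tighter per-index bound $f(D^1_i)\le OPT_1 + OPT_2 + 2(1-p_i)b_i$; it then needs the claim $\min_i (1-p_i)b_i \le OPT_2$, which it proves by contradiction using a Cauchy--Schwarz inequality on $\sum_i \frac{1}{1-p_i}$ --- the most technical part of its proof. You instead apply the transfer bound uniformly (including at $j=i$, where it is loose), obtaining $f(D^1_i)\le OPT_1 + OPT_2 + 2c_i^*$, and then only need the one-line fact that the minimum of the $c_i^*$ is at most their convex combination $OPT_2$. Both derivations land on the same constant $OPT_1 + 3OPT_2$, so nothing is lost; your version trades a slightly weaker intermediate inequality for the elimination of the Cauchy--Schwarz argument, which is a clean simplification.
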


\proof{\textit{Proof}.}
Let $OPT_1$ and $OPT_2$ be the first-stage and second-stage costs of an optimal solution, and $b_1, \ldots, b_q$ be the bottleneck edge weights in the optimal second-stage matchings for $S_1, \ldots, S_q$. Therefore $OPT_2 = \sum\limits_{i=1}^q p_i \cdot b_i$. Consider $D^1_i$ for $i \in \{1,\ldots,q\}$ obtained from Algorithm \ref{stochasticnosurplusalgo}. We claim that $\min\limits_{i=1,\ldots,q}f(D^1_i) \leq OPT_1 + 3 OPT_2$. For every $i \in \{1,\ldots,q\}$, let $\alpha_i$ and $\beta_i$ be the first-stage and second-stage costs of $D^1_i$ when we consider only scenario $S_i$, that is 
$$cost_1(D^1_i,R_1) = \alpha_i \quad \text{and} \quad   cost_2(D\setminus D^1_i, S_i) =   \beta_i.$$
It is clear that $\alpha_i + \beta_i \leq OPT_1 + b_i$. Furthermore, when a scenario $S_j$ ($j\neq i$) is realized, we can bound the cost of matching $S_j$ to $D\setminus D^1_i$ by using the \red{ triangle inequality},
$$ cost_2(D\setminus D^1_i, S_j) \leq \beta_i + b_i + b_j.$$
Therefore we get that, 
\begin{center}
\begin{tabular}{lll}
    \\
     $f(D^1_i)$ & $\leq$ & $\alpha_i + p_i\cdot \beta_i + \sum\limits_{j\neq i} p_j (\beta_i + b_j + b_i)$\\
     & = &  $\alpha_i + \beta_i + \sum\limits_{j\neq i} p_j (b_j + b_i)$\\
     & $\leq$ & $OPT_1 + p_i \cdot b_i + \sum\limits_{j\neq i} p_j (b_j + 2b_i)$ \\
     & $\leq$ & $OPT_1 + OPT_2 + 2(1-p_i)b_i$.\\
     & & 
\end{tabular}
\end{center}

Next we show that $\min\limits_{1\leq i\leq q} (1-p_i)\cdot b_i \leq OPT_2$. Suppose, in the contrary, that for all $i \in \{1, \ldots, q\}$, we have $(1-p_i)\cdot b_i > OPT_2$, then $p_i \cdot b_i > \frac{p_i}{1-p_i} OPT_2$ and by summing we get that 
\begin{equation*}
    OPT_2 > OPT_2 \sum\limits_{i=1}^q \frac{p_i}{1-p_i}.
\end{equation*}
We can assume without loss of generality that $OPT_2 >0$ and therefore we get that 
\begin{equation*}
    1 > \sum\limits_{i=1}^q \frac{p_i}{1-p_i} = \sum\limits_{i=1}^q \frac{p_i + 1 - 1}{1-p_i} = \sum\limits_{i=1}^q \frac{1}{1-p_i} - q,
\end{equation*}
which implies that
\begin{equation}\label{tocontradict}
    1+q > \sum\limits_{i=1}^q \frac{1}{1-p_i}
\end{equation}
By the Cauchy-Schwarz inequality, we know that
\begin{equation*}
     (q-1)\cdot \sum\limits_{i=1}^q \frac{1}{1-p_i} = \sum\limits_{i=1}^q (1-p_i) \cdot  \sum\limits_{i=1}^q \frac{1}{1-p_i} \geq q^2.
\end{equation*}
Therefore, \begin{equation*}
    \sum\limits_{i=1}^q \frac{1}{1-p_i} \geq \frac{q^2}{q-1} > q+1,
\end{equation*}
which contradicts \eqref{tocontradict}. Hence $\min\limits_{1\leq i\leq q} (1-p_i)\cdot b_i \leq OPT_2$ and therefore $ \min\limits_{1\leq i\leq q}  f(D^1_i) \leq OPT_1 + 3OPT_2.$ 

\hfill \Halmos
\endproof

\section{Two-Stage Robust Matching Problem (TSRM)}\label{appendix:tsrm}
\subsection{Problem Formulation}
In this section, we consider a variant of the TSRMB problem in which \emph{both the first and second-stage costs} are defined as the \emph{total weight} of minimum-cost matchings, in contrast to TSRMB where the first-stage cost is based on the average cost per rider and the second stage uses bottleneck cost. As before, we are given a set \( R_1 \) of first-stage riders, which must be immediately and irrevocably matched to a subset of drivers \( D_1 \subset D \). After this first-stage matching, a scenario \( S_i \subset R_2 \) is revealed from a given list \( \mathcal{S} = \{ S_1, \ldots, S_q \} \).
The first-stage cost, \( \text{cost}_1(D_1, R_1) \), is the total weight of the matching between \( D_1 \) and \( R_1 \), while the second-stage cost, \( \text{cost}_2(D \setminus D_1, S) \), is the total weight of the minimum-cost matching between the realized scenario \( S \) and the available drivers \( D \setminus D_1 \).
Formally, let \( M_1 \) be the minimum-weight perfect matching between \( R_1 \) and \( D_1 \), and for any scenario \( S \in \mathcal{S} \), let \( M_2^S \) be the minimum-weight perfect matching between \( S \) and \( D \setminus D_1 \). Then the cost functions for TSRM are given by:

$$
    cost_1(D_1,R_1)  =  \sum\limits_{(i,j) \in M_1} d(i,j), \quad \mbox{ and } \quad
    cost_2(D\setminus D_1,S)  =  \sum\limits_{(i,j) \in M_2^S} d(i,j).
$$

Given an optimal first-stage solution $D_1^*$, we denote $OPT_1 = cost_1(D_1^*,R_1)$, $OPT_2 = \max \{cost_2(D\setminus D_1^*, S)\; | \; S \in \mathcal{S}\}$ and $OPT = OPT_1 + OPT_2$. In this variant we consider an explicit model of scenarios and optimize over the worst-case scenario. We show that the problem is NP-hard even with two scenarios. We restate a result from the literature that gives a 3-approximation using a greedy approach. We further improve over this approximation in the specific case of no surplus by providing a $7/3$-approximation. We assume for the sake of simplicity that all the scenarios have the same size $k$.

\subsection{Complexity}
\begin{theorem}
TSRM is NP-hard even when the number of scenarios is equal to 2.
\end{theorem}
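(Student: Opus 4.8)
The plan is to reduce from 3-Dimensional Matching (3-DM), reusing essentially the two-scenario gadget from the proof of Theorem~\ref{hardness_robust}. Given a 3-DM instance with sets $U,V,W$ of size $n$ and triples $T\subseteq U\times V\times W$, I would set $D=T$ (one driver per triple), create the two scenarios $S_1=U$ and $S_2=V$, and, writing $d_T(w)$ for the number of triples containing $w\in W$, create $d_T(w)-1$ first-stage rider copies of each $w$, so that $|R_1|=|T|-n$. The rider--driver distances are again $1$ when the element lies in the triple and $3$ otherwise, completed to a metric exactly as before. Note that $|D|=|R_1|+n$ and each scenario has size $n$, so there is no surplus: the $n$ drivers $D_2:=D\setminus D_1$ reserved for the second stage must serve \emph{both} $S_1$ and $S_2$.

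The easy direction is the lower bound. In any feasible solution $cost_1(D_1,R_1)$ is a sum of $|T|-n$ rider--driver weights, each at least $1$, so $cost_1\ge |T|-n$; likewise each scenario forces a perfect matching of $n$ edges of weight $\ge 1$, so $cost_2(D\setminus D_1,S_i)\ge n$ and hence $\max_i cost_2\ge n$. Therefore every solution has total cost at least $|T|$, with equality if and only if every matching edge used (first stage and in both scenarios) has weight exactly $1$. Conversely, if $M$ is a 3-DM, taking $D_2=M$ realizes all three matchings with unit edges and attains total cost exactly $|T|$.

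The crux is showing that attaining total cost $|T|$ forces $D_2$ to be a genuine 3-DM, and here I would run a counting argument on coordinate projections. For the first stage to use only unit edges, each copy of $w$ must be matched to a driver-triple with third coordinate $w$; since there are $d_T(w)$ such triples and exactly $d_T(w)-1$ copies of $w$, and $\sum_w(d_T(w)-1)=|T|-n=|D_1|$, pigeonhole forces $D_1$ to contain exactly $d_T(w)-1$ triples with third coordinate $w$, i.e. $D_2$ contains exactly one triple per $w\in W$ (distinct third coordinates covering $W$). For $S_1=U$ to match with unit edges, the $n$ triples of $D_2$ must cover $U$ with distinct first coordinates, and analogously $S_2=V$ forces distinct second coordinates. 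Thus $D_2$ has pairwise-distinct coordinates in all three dimensions and covers $U,V,W$, i.e. $D_2$ is a 3-dimensional matching.

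Combining the two directions, the constructed TSRM instance has optimal value exactly $|T|$ when a 3-DM exists and at least $|T|+2$ (distances are integral and any defect adds $2$) otherwise; deciding the optimal value therefore decides 3-DM, which proves NP-hardness with two scenarios. The main obstacle is the coordinate-counting step of the previous paragraph: unlike the bottleneck version, the total-weight objective aggregates the ``cost-$3$'' penalties, so I must argue that no redistribution of penalties across the first stage and the two scenarios can beat $|T|$ without $D_2$ being an exact matching, which is precisely what the three simultaneous projection constraints rule out.
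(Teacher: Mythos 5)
Your proof is correct, but it takes a genuinely different route from the paper's. The paper proves this theorem by a reduction from \textsc{2-Partition}: it builds $R_1=\{r_1,\dots,r_n\}$, two scenarios of size $n$, and $2n$ drivers with distances involving a large constant $P$ and the item weights $s_i$, so that the choice of matching $r_j$ to $\delta_j$ versus $\delta_{n+j}$ encodes which side of the partition $j$ lands on, and the worst-case second-stage total weight equals $\tfrac12\bigl(P|I|+\sum_j s_j\bigr)$ exactly when the partition is balanced. You instead recycle the two-scenario 3-DM gadget from the paper's Theorem~\ref{hardness_robust} (the TSRMB hardness proof) and observe that replacing the bottleneck objective by the total-weight objective changes nothing essential: with $|R_1|=|T|-n$ unit lower bounds in the first stage and $n$ in each scenario, total cost $|T|$ forces every matching edge to have weight $1$, and your three projection-counting arguments (third coordinates via the first stage, first and second coordinates via $S_1$ and $S_2$, using that $\ell=0$ makes both second-stage matchings use all of $D_2$) correctly force $D_2$ to be a 3-dimensional matching. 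Each approach buys something: the paper's \textsc{2-Partition} reduction is self-contained and numerically clean but, since \textsc{2-Partition} is only weakly NP-hard, it establishes only weak NP-hardness of TSRM; your reduction is from the strongly NP-hard 3-DM problem and uses only distances in $\{1,3\}$, so it yields \emph{strong} NP-hardness and reuses machinery already in the paper. Neither gives constant-factor inapproximability for two scenarios (your gap is additive, $|T|$ versus $|T|+2$), which is consistent with the paper reserving the $(2-\epsilon)$ statement for three or more scenarios.
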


\proof{\textit{Proof.}}
We construct (in polynomial time) a reduction from the 2-partition
problem. Let $\big(I, (s_i)_{i \in I}\big)$ be an instance of the 2-partition problem. 

\vspace{3mm}

\noindent \textbf{The 2-partition problem:}\\
Instance: Finite set $I$ and number $s_i \in \mathbb{Z}^+$ for $i \in I$.\\
Question: Is there a subset $I' \subset I$ such that $\sum\limits_{i \in I'} s_i = \sum\limits_{i \in I \setminus I'} s_i$?\\ 
It is well known that the 2-partition problem is weakly NP-hard even when $|I'| = |I|/2$.

\vspace{3mm}

Without loss of generality, suppose that $I = \{1, \ldots, n\}$ for some integer $n$. We construct the following instance of TSRM with two scenarios. Let $R_1 = \{r_1, \ldots, r_n\}$, $S_1 = \{r_{n+1}, \ldots, r_{2n}\}$ and $S_2= \{r_{2n+1}, \ldots, r_{3n}\}$. Note that every scenario is of size $n$. Let $D = \{\delta_1, \ldots, \delta_{2n}\}$. Let $P$ be a sufficiently big constant such that $P \geq \sum\limits_{i \in I} s_i$. We define the distances between drivers and riders as follows:
\begin{itemize}
    \item For $j \in \{1,\ldots,n\}$:
$d(\delta_j, r_j) = P$, $d(\delta_{n+j},r_j) = P$ and  $d(\delta_i,r_j) = \infty$ otherwise.
    \item For $j \in \{n+1, \ldots, 2n\}$, $d(\delta_{j-n},r_j) = P$, $d(\delta_j,r_j) = s_{j-n}$ and $d(\delta_i,r_j) = \infty$ otherwise.
    \item For $j \in \{2n+1, \ldots, 3n\}$, $d(\delta_{j-2n},r_j) = s_{j-2n}$, $d(\delta_{j-n},r_j) = P$ and $d(\delta_i,r_j) = \infty$ otherwise.
\end{itemize}
This choice of distances induces a metric bipartite graph on $R_1$ and $S_1 \cup S_2$. A feasible solution to this TSRM instance with bounded cost has two possibilities to match rider $r_j\in R_1$ ($j\leq n$): either to driver $\delta_j$ or driver $\delta_{n+j}$. Consider a feasible bounded cost first-stage solution $D_1$, and let $I'$ be the set of indices $j \leq n$ such that the first-stage rider $r_j$ is matched to driver $\delta_j$ in the first stage. Then $I \setminus I'$ is the set of
elements $j\leq n$ such that the first-stage rider $r_j$ is matched to driver $\delta_{j+n}$. In both cases, the cost of matching $r_j \in R_1$ is equal to $P$. When the scenario $S_1$ is realized, the rider $r_{n+j} \in S_1$ ($j \leq n)$ needs to be matched to $\delta_{j+n}$ if $j \in I'$, with a cost $P$ and to $\delta_{j}$ if $j \in I\setminus I'$, with a cost $s_j$. Similarly, when the scenario $S_2$ is realized, the rider $r_{2n+j} \in S_2$ ($j \leq n)$ needs to be matched to $\delta_{j+n}$ if $j \in I'$, with a cost $s_j$ and to $\delta_{j}$ if $j \in I\setminus I'$, with a cost $P$. The first and second-stage costs are therefore:
$$     cost_1(D_1,R_1)  = P|I|, $$
$$ cost_2(D \setminus D_1, S_1)  = P|I'| + \sum\limits_{j \in I \setminus I'} s_j, \quad \quad \text{and} \quad \quad
    cost_2(D \setminus D_1, S_2)  = P|I \setminus I'| + \sum\limits_{j \in I'} s_j.
$$

We claim that there exists
a 2-partition $I'$ such that $|I'| = |I \setminus I'|$ if and only if there is a solution with total cost equal to
$ \frac{1}{2}\big(3P|I| + \sum\limits_{j \in I} s_j\big)$. 

Suppose there exist a 2-partition $I'$ with $|I'| = |I \setminus I'|$. This implies that \begin{equation}
    \sum\limits_{j \in I'}s_j + P|I\setminus I'| = \sum\limits_{j \in I\setminus I'}s_j + P|I'| = \frac{1}{2}\big(P|I| + \sum\limits_{j \in I} s_j\big).
\end{equation}
Let $D_1$ be the first-stage decision that for every $j \leq n$, matches $r_j$ to $\delta_j$ if $j \in I'$, and $r_j$ to $\delta_{n+j}$ otherwise. The costs of this first-stage decision on scenarios $S_1$ and $S_2$ are:
\begin{flalign*}
    cost_1(D_1,R_1) + cost_2(D \setminus D_1, S_1) & = P|I| + P|I'| + \sum\limits_{j \in I \setminus I'} s_j = \frac{1}{2}\big(3P|I| + \sum\limits_{j \in I} s_j\big),\\
     cost_1(D_1,R_1) +  cost_2(D \setminus D_1, S_2) & = P|I| + P|I \setminus I'| + \sum\limits_{j \in I'} s_j = \frac{1}{2}\big(3P|I| + \sum\limits_{j \in I} s_j\big).
\end{flalign*}
Therefore the total cost of $D_1$ is equal to  $$cost_1(D_1,R_1) + \max\limits_{S \in \{S_1,S_2\}} cost_2(D \setminus D_1, S) = \frac{1}{2}\big(3P|I| + \sum\limits_{j \in I} s_j\big).$$

Suppose now that there is a first-stage decision $D_1$ with bounded total cost equal to $\frac{1}{2}\big(3P|I| + \sum\limits_{j \in I} s_j\big)$. Let $I'$ be the set of indices $j \leq n$ such that, in the first-stage matching of $D_1$, $r_j$ is matched to driver $\delta_j$ for $j \leq n$. We know that  
\begin{flalign*}
     cost_1(D_1,R_1) + cost_2(D \setminus D_1, S_1) & = P|I| + P|I'| + \sum\limits_{j \in I \setminus I'} s_j \leq \frac{1}{2}\big(3P|I| + \sum\limits_{j \in I} s_j\big).\\
      cost_1(D_1,R_1) +  cost_2(D \setminus D_1, S_2) & = P|I| + P|I \setminus I'| + \sum\limits_{j \in I'} s_j \leq \frac{1}{2}\big(3P|I| + \sum\limits_{j \in I} s_j\big).
\end{flalign*}
This implies the following inequalities
\begin{gather}
     P|I'| + \sum\limits_{j \in I \setminus I'} s_j \leq  \frac{1}{2}\big(P|I| + \sum\limits_{j \in I} s_j\big)\label{ineq1}.\\
     P|I \setminus I'| + \sum\limits_{j \in I'} s_j \leq  \frac{1}{2}\big(P|I| + \sum\limits_{j \in I} s_j\big)\label{ineq2}.
\end{gather}
The only way \eqref{ineq1} and \eqref{ineq2} can hold is if we have 
\begin{equation}\label{finish}
    P|I'| + \sum\limits_{j \in I \setminus I'} s_j  = P|I \setminus I'| + \sum\limits_{j \in I'} s_j = \frac{1}{2}\big(P|I| + \sum\limits_{j \in I} s_j\big).
\end{equation} 
Now suppose that $|I\setminus I'| > |I'|$, since we can make $P$ as big as needed, then equation \eqref{finish} cannot hold. Therefore $|I\setminus I'| \leq |I'|$. Similarly, we get that $|I'| \leq |I \setminus I|$, Therefore, $|I'| = |I \setminus I'|$ and equation \eqref{finish} becomes 
\begin{equation*}\label{PIPrime}
 \sum\limits_{j \in I \setminus I'} s_j  = \sum\limits_{j \in I'} s_j.
     \end{equation*}
This shows that $I'$ is a 2-Partition with $|I'| = |I \setminus I'|$. 
\hfill
\Halmos
\endproof

\subsection{Approximation Algorithsm}
In order to show our approximation guarantee.
We will present two algorithms and then balance their performances. The first algorithm (greedy) in Lemma \ref{greedytsrm} works for any setting (meaning with any surplus). The second algorithm (Algorithm \ref{alg:73}) in Lemma \ref{lemma73} assumes that the surplus is 0. The main result for no surplus is given in  Theorem \ref{thm:tamim}.

\begin{lemma}\label{greedytsrm}
The greedy algorithm that minimizes only the first-stage cost yields a solution with total cost at most $3OPT_1 + OPT_2$ to the TSRM.
\end{lemma}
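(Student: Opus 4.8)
The plan is to prove the stronger statement that for the greedy first-stage decision $D_1^{g}$ and \emph{every} scenario $S$ one has $cost_2(D\setminus D_1^{g}, S)\le 2OPT_1+OPT_2$. Combined with the trivial observation that $cost_1(D_1^{g},R_1)\le OPT_1$ (greedy returns the globally minimum first-stage matching, and the optimal first-stage set $D_1^{*}$ is one feasible choice), taking the worst-case $S$ then yields a total greedy cost of at most $3OPT_1+OPT_2$. So after the easy first-stage bound, everything reduces to controlling the greedy second-stage cost on an arbitrary fixed scenario $S$.

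To do this I would superpose three matchings on the vertex set $R_1\cup S\cup D$: the greedy first-stage matching $M_1^{g}$ (between $R_1$ and $D_1^{g}$), an optimal first-stage matching $M_1^{*}$ (between $R_1$ and $D_1^{*}$), and an optimal second-stage matching $M_2^{*}$ of $S$ into $D\setminus D_1^{*}$ of weight at most $OPT_2$. Every rider of $R_1$ then has degree two, every rider of $S$ has degree one, and every driver has degree at most two, since a driver used by $M_2^{*}$ lies in $D\setminus D_1^{*}$ and is therefore untouched by $M_1^{*}$. Hence the union is a disjoint collection of simple paths and cycles, and the cycles contain no rider of $S$. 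Starting from any $s\in S$ I would follow the unique alternating path $s\xrightarrow{M_2^{*}}\delta_0\xrightarrow{M_1^{g}}r_1\xrightarrow{M_1^{*}}\delta_1\xrightarrow{M_1^{g}}r_2\cdots$, leaving each driver along its $M_1^{g}$-edge and each rider along its $M_1^{*}$-edge. This path can never return to $S$: the first driver $\delta_0$ has its only $M_2^{*}$-edge going back to $s$, and every later driver $\delta_i$ with $i\ge 1$ lies in $D_1^{*}$ and so carries no $M_2^{*}$-edge. Consequently the path must terminate at a driver with no $M_1^{g}$-edge, that is, a driver in $D\setminus D_1^{g}$ that is free in the greedy second stage. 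Because these paths are distinct connected components they are vertex-disjoint, so their terminal drivers are distinct and assigning each $s$ to the endpoint of its path gives a valid matching of $S$ into $D\setminus D_1^{g}$.

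The final step is the charging argument. By the triangle inequality the distance from $s$ to its terminal driver is at most the total weight of the edges along its path; summing over all $s\in S$ and using vertex-disjointness, each edge of $M_2^{*}$, of $M_1^{g}$, and of $M_1^{*}$ is counted at most once. Therefore $cost_2(D\setminus D_1^{g},S)$ is at most $\mathrm{weight}(M_2^{*})+\mathrm{weight}(M_1^{g})+\mathrm{weight}(M_1^{*})\le OPT_2+OPT_1+OPT_1$, and adding $cost_1(D_1^{g},R_1)\le OPT_1$ gives the claimed $3OPT_1+OPT_2$. The main obstacle, and really the only nonroutine point, is establishing the two structural facts that drive the charging: that every alternating path terminates at a driver lying \emph{outside} $D_1^{g}$ (rather than cycling back or reaching another rider of $S$), and that the paths are genuinely disjoint. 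These are exactly what guarantee a feasible greedy second-stage assignment and ensure that each first-stage edge is charged to a single second-stage rider.
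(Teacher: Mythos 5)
Your proof is correct. The paper itself gives no argument at all for this lemma --- it simply cites it as a special case of Theorem~2.4 of Kalyanasundaram and Pruhs \cite{kalyanasundaram1993online} --- so what you have done is reconstruct, self-contained, the alternating-path charging argument that underlies that competitive-ratio bound. Your two structural claims both hold: the superposition $M_1^{g}\cup M_1^{*}\cup M_2^{*}$ has maximum degree two because an $M_2^{*}$-driver lies in $D\setminus D_1^{*}$, so each component is a path or cycle; and the walk from $s\in S$ never re-enters $S$ because drivers are only ever entered along $M_1^{*}$ or the initial $M_2^{*}$ edge (never along $M_1^{g}$, which is always traversed driver-to-rider), while every driver after $\delta_0$ lies in $D_1^{*}$ and hence carries no $M_2^{*}$ edge. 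The path therefore must terminate at a driver outside $D_1^{g}$, the terminal drivers are distinct by component-disjointness, and the triangle-inequality charge gives $cost_2(D\setminus D_1^{g},S)\le w(M_2^{*})+w(M_1^{g})+w(M_1^{*})\le OPT_2+2\,OPT_1$, whence the total of $3OPT_1+OPT_2$. One harmless technicality you gloss over: a rider of $R_1$ whose $M_1^{g}$- and $M_1^{*}$-edges coincide has degree one rather than two, but such a rider forms an isolated two-vertex component (its common driver is in $D_1^{*}$, hence has no $M_2^{*}$-edge) and can never appear on a path emanating from $S$, so the argument is unaffected. Compared with the paper's citation, your version costs a page but buys a verifiable, reference-free proof that makes transparent exactly where the two $OPT_1$ terms and the single $OPT_2$ term originate.
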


\proof{\textit{Proof.}}
Special case of Theorem 2.4 in \cite{kalyanasundaram1993online}. \hfill \Halmos
\endproof

\begin{lemma}\label{lemma73}
If  the surplus $\ell = |D| - |R_1| - k$ is equal to zero, Algorithm \ref{alg:73} yields a solution to  TSRM with a total cost  at most
 $ OPT_1 + 5 OPT_2 $.
\end{lemma}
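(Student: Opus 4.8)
The plan is to analyze Algorithm \ref{alg:73}, which I read as the natural single-scenario heuristic for \textbf{TSRM} (the analogue of Algorithm \ref{stochasticnosurplusalgo}): for each explicit scenario $S_i \in \mathcal{S}$ solve the deterministic single-scenario \textbf{TSRM} exactly, and return the first-stage decision achieving the smallest worst-case objective $f$. Since $\ell = 0$, every feasible $D_1$ leaves exactly $|D\setminus D_1| = k = |S_i|$ drivers for the second stage, so the single-scenario instance for $S_i$ is precisely a minimum-weight perfect matching of $R_1 \cup S_i$ into $D$ and is solvable in polynomial time; let $D_1^i$ be its optimizer, with first-stage cost $\alpha_i = cost_1(D_1^i, R_1)$ and second-stage cost $\beta_i = cost_2(D\setminus D_1^i, S_i)$. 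Because the robust optimum $D_1^*$ is itself feasible for each single-scenario instance, optimality of $D_1^i$ gives $\alpha_i + \beta_i \le OPT_1 + b_i \le OPT_1 + OPT_2$, where $b_i := cost_2(D\setminus D_1^*, S_i) \le OPT_2$.

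The first structural step I would prove is that, with no surplus, the optimal solution matches every scenario to the same set $D_2^* := D\setminus D_1^*$ of exactly $k$ drivers. From this I derive the key lemma powering the whole argument: for any two scenarios $S_i, S_j$ there is a perfect matching $\pi_{ij}$ between them of total weight at most $b_i + b_j \le 2OPT_2$. Indeed, for each $\delta \in D_2^*$ let $s_\delta \in S_i$ and $t_\delta \in S_j$ be the riders assigned to $\delta$ in the optimal second-stage matchings; setting $\pi_{ij}(s_\delta) = t_\delta$ (a bijection, since each scenario uses all of $D_2^*$) and summing the triangle inequalities $d(s_\delta, t_\delta) \le d(s_\delta, \delta) + d(\delta, t_\delta)$ over the $k$ drivers telescopes to $\sum_\delta d(s_\delta, t_\delta) \le b_i + b_j$. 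This aggregation over all $k$ edges is the step I expect to be the main obstacle: in contrast to the bottleneck variant, where one controls a single worst edge, here the transfer cost must be bounded in total, and it is exactly the shared-driver structure forced by $\ell = 0$ that makes the sum collapse to $b_i + b_j$ rather than to $k\cdot OPT_2$.

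Finally I would transfer the single-scenario matching to all other scenarios. Fix the candidate $D_1^i$ together with its matching $\mu_i$ of $S_i$ into $D\setminus D_1^i$ of total weight $\beta_i$. For any $S_j$, composing $\pi_{ij}$ with $\mu_i$ yields a perfect matching of $S_j$ into $D\setminus D_1^i$ (both maps are bijections on $k$-element sets), and summing $d(\pi_{ij}(s), \mu_i(s)) \le d(\pi_{ij}(s), s) + d(s, \mu_i(s))$ over $s \in S_i$ gives $cost_2(D\setminus D_1^i, S_j) \le \operatorname{wt}(\pi_{ij}) + \beta_i \le \beta_i + 2OPT_2$. Hence the worst-case second-stage cost of $D_1^i$ is at most $\beta_i + 2OPT_2$, so
$$ f(D_1^i) = \alpha_i + \max_{S \in \mathcal{S}} cost_2(D\setminus D_1^i, S) \le \alpha_i + \beta_i + 2OPT_2 \le OPT_1 + 3OPT_2 \le OPT_1 + 5OPT_2, $$
and since the algorithm returns $\argmin_i f(D_1^i)$ the same bound holds for its output, proving the lemma. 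The remaining checks are routine: the composed maps are genuine matchings, and $f$ is evaluable in polynomial time because $\mathcal{S}$ is listed explicitly. As a remark, combining this first-stage decision with the greedy solution of Lemma \ref{greedytsrm} (which costs at most $3OPT_1 + OPT_2$) and returning the cheaper of the two yields the advertised constant-factor approximation for \textbf{TSRM} in the no-surplus regime.
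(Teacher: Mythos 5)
Your reading of Algorithm \ref{alg:73} is not the paper's: the actual algorithm picks a single scenario $S$, first reserves the set $D_2$ of $k$ drivers minimizing the matching weight $M(D_2,S)$ while \emph{ignoring} $R_1$, and only then matches $R_1$ to $D\setminus D_2$; it neither jointly optimizes the single-scenario objective nor takes an $\argmin$ over scenarios. Because of this, your proof as written does not establish the lemma for the stated algorithm: the missing piece is a bound on the first-stage cost of this sequential procedure. Since $D_2$ may contain drivers that the optimal first-stage matching needs, $cost_1(D\setminus D_2,R_1)$ is not bounded by $OPT_1$; the paper's Claim 2 bounds it by $OPT_1+2OPT_2$ via the chain $M(D\setminus D_2,R_1)\le M(D\setminus D_2,S)+M(D_2,S)+M(D_2,R_1)$, and this extra $2OPT_2$, together with the second-stage bound of $3OPT_2$, is exactly where the constant $5$ (rather than $3$) comes from.

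That said, the mathematics in your argument is sound, and its key step coincides with the paper's Claim 1: with $\ell=0$ every scenario is served by the same driver set $D_2^*$ in the optimum, so any two scenarios admit a perfect matching between them of total weight at most $b_i+b_j\le 2OPT_2$, which transfers a second-stage matching from the chosen scenario to any other at additive cost $2OPT_2$ (the paper writes this as $M(D_2,S')\le M(D_2,S)+M(D_2^*,S)+M(D_2^*,S')$, the same telescoping). Your variant algorithm --- solve the single-scenario TSRM exactly as a minimum-weight perfect matching of $R_1\cup S_i$ into $D$, which is valid precisely because $\ell=0$ --- is a legitimate polynomial-time procedure, and your bound $f(D_1^i)\le \alpha_i+\beta_i+2OPT_2\le OPT_1+b_i+2OPT_2\le OPT_1+3OPT_2$ for it checks out. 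That is strictly stronger than the lemma's conclusion and, combined with the greedy bound $3OPT_1+OPT_2$ of Lemma \ref{greedytsrm} exactly as you suggest, would improve the paper's final $7/3$-approximation for no-surplus TSRM to a $2$-approximation. In short: as a proof of the lemma about Algorithm \ref{alg:73} the proposal has a genuine gap (no first-stage bound for the sequential algorithm the paper actually runs), but as a self-contained analysis of a natural substitute algorithm it is correct and sharper.
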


\begin{algorithm}
	\caption{}
	\label{alg:73}
	\begin{algorithmic}[1]
	\STATE{Pick a scenario \( S \in \mathcal{S} \).}
	\STATE{Find a subset of drivers \( D_2 \subseteq D \) of size \( |S| \) that minimizes the cost of a perfect matching with \( S \), and compute this matching.}
	\STATE{Compute a minimum weight perfect matching between \( D \setminus D_2 \) and \( R_1 \). Let \( D_1 \) be the set of drivers used in this matching.}
	\RETURN{\( D_1 \)}.
	\end{algorithmic} 
\end{algorithm}

\proof{\textit{Proof of Lemma \ref{lemma73}.}}
Consider Algorithm \ref{alg:73}. In the remaining of the proof, we will refer to the total cost of the solution given by Algorithm \ref{alg:73} as $ALG$, and to its first (resp. second) stage cost as $ALG_1$ (resp. $ALG_2$). Let us show the following two claims.

\begin{claim}
$ALG_2 \leq 3 OPT_2$.
\end{claim}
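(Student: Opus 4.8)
The plan is to exploit the no-surplus structure: since $|D| = |R_1| + k$ and $|D_1| = |R_1|$, the set $D_2 := D \setminus D_1$ produced by Algorithm~\ref{alg:73} has exactly $k$ drivers, and these same $k$ drivers must serve \emph{every} scenario in the second stage. The key observation to record first is that, in Step~2, $D_2$ is chosen to minimize the perfect-matching cost to the fixed scenario $S$; so letting $D_2^* := D \setminus D_1^*$ denote the optimal solution's $k$ second-stage drivers, the subset $D_2^*$ competes with $D_2$ and yields $cost_2(D_2, S) \leq cost_2(D_2^*, S) \leq OPT_2$. Thus the algorithm already matches its chosen scenario $S$ within $OPT_2$; the real work is to bound $cost_2(D_2, S')$ for an arbitrary other scenario $S'$.

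To that end I would fix three perfect matchings, each of weight at most $OPT_2$: the optimal matching $M^*_{S'}$ between $S'$ and $D_2^*$, the optimal matching $M^*_S$ between $S$ and $D_2^*$, and the algorithm's minimum-weight matching $M_S$ between $S$ and $D_2$ (at most $OPT_2$ by the observation above). I would then construct a perfect matching $\phi : S' \to D_2$ by composing three bijections: send each $s' \in S'$ to its $M^*_{S'}$-partner $\delta^* \in D_2^*$, then to the $M^*_S$-partner $s \in S$ of $\delta^*$, then to the $M_S$-partner $\phi(s') \in D_2$ of $s$. Because all four sets $S'$, $D_2^*$, $S$, $D_2$ have size exactly $k$ and each matching is perfect, the composition $\phi$ is a bijection from $S'$ onto $D_2$, hence a legitimate perfect matching.

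The cost bound then follows from the triangle inequality applied edgewise, $d(s', \phi(s')) \le d(s', \delta^*) + d(\delta^*, s) + d(s, \phi(s'))$, followed by summation over $s' \in S'$. The crucial bookkeeping is that the reindexing makes each of the three partial sums equal to the total weight of one of the three matchings: the first sum is the weight of $M^*_{S'}$; the second, as $\delta^*$ ranges over all of $D_2^*$, is the weight of $M^*_S$; and the third, as $s$ ranges over all of $S$, is the weight of $M_S$. Each is at most $OPT_2$, so $cost_2(D_2, S') \le \sum_{s' \in S'} d(s', \phi(s')) \le 3\,OPT_2$. Since $S'$ was arbitrary, taking the maximum over scenarios gives $ALG_2 \le 3\,OPT_2$.

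The main obstacle I anticipate is not the triangle-inequality estimate but the combinatorial verification that the composition of the three matchings is genuinely a perfect matching (rather than an assignment that could reuse a driver) and that the three summation indices truly traverse $S'$, $D_2^*$, and $S$ respectively. This is precisely where the no-surplus hypothesis $\ell = 0$ is indispensable, since it forces $|D_2| = |D_2^*| = |S| = |S'| = k$ and makes every matching in sight perfect, so that composing bijections again yields a bijection and the sum-reindexing is exact.
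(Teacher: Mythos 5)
Your proposal is correct and follows essentially the same route as the paper: the paper's proof is exactly the chained triangle inequality $M(D_2,S') \leq M(D_2,S) + M(D_2^*,S) + M(D_2^*,S')$ with each term bounded by $OPT_2$ (using the optimality of $D_2$ for $S$ and the fact that no surplus forces $D_2^*$ to serve every scenario). You merely make explicit the composition-of-bijections justification that the paper leaves implicit.
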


\proof{\textit{Proof.}}
We use the notation $M(A,B)$ to refer to the total weight of the minimum weight perfect matching between a set of drivers $A$ and a set of riders $B$. If the scenario $S$ that was picked by the algorithm is realized, then in this case we know that its second-stage cost is at most $OPT_2$. Now, suppose a different scenario $S' \neq S$ is realized. Let $D_2^*$ be the set of $k$ drivers that the optimal solution saves for the second stage. We use the \red{ triangle inequality} to establish that :
\begin{equation*}\label{triangular}
    M(D_2, S') \leq M(D_2,S) + M(D_2^*,S) + M(D_2^*,S').
\end{equation*}
Let us bound the right hand side terms of the above equation. We have $M(D_2,S) \leq OPT_2$ because by definition of $D_2$. Now since $|D| = |R_1| + k$, thiss means that the optimal solution saves exactly $k$ drivers to be matched with any scenario realization. This implies that $ M(D_2^*,S) \leq OPT_2$ and $M(D_2^*, S') \leq OPT_2$. The claim follows immediately.
\hfill
\Halmos
\endproof

\begin{claim}
$ALG_1 \leq OPT_1 + 2OPT_2$.
\end{claim}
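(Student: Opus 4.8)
The plan is to produce, by construction, a perfect matching between $R_1$ and $D_1 = D\setminus D_2$ of total weight at most $OPT_1 + 2OPT_2$; since Algorithm \ref{alg:73} sets $D_1$ to be the minimum weight such matching (Step 3), this immediately yields $ALG_1 \le OPT_1 + 2OPT_2$. I would first record the structural consequence of no surplus: since $|D| = |R_1| + k$ and $|S| = |D_2| = k$, we have $|D_1| = |R_1|$, so the Step-3 matching covers all of $R_1$ using exactly the drivers $D_1 = D\setminus D_2$. I then fix three reference matchings to charge against: the optimal first-stage matching $M_1^*$ pairing $R_1$ with $D_1^*$ (total weight $OPT_1$); the algorithm's matching $A_S$ pairing $S$ with $D_2$ (weight $M(D_2,S) \le OPT_2$, since $D_2$ is the best matching for $S$); and an optimal second-stage matching $O_S$ pairing $S$ with $D_2^* := D\setminus D_1^*$ (weight $M(D_2^*,S)\le OPT_2$, using that with no surplus the optimum reserves exactly the $k$ drivers $D_2^*$ and matches all of $S$ to them).

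Next I would split $R_1$ according to availability of its optimal driver. Riders whose $M_1^*$-driver lies in $D_1^*\cap D_1$ are kept unchanged, at exactly their $M_1^*$ cost. The remaining \emph{bad} riders $R_{bad}$ are those whose $M_1^*$-driver lies in $D_1^*\cap D_2$ and is therefore reserved by the algorithm. A counting step using $|D_2| = |D_2^*| = k$ and the disjoint decomposition $D = D_1^* \cup D_2^*$ gives $|R_{bad}| = |D_1^*\cap D_2| = |D_2^*\cap D_1|$, so the free ``extra'' drivers $D_2^*\cap D_1$ (available, and disjoint from the kept riders' drivers) are exactly enough to rehouse $R_{bad}$.

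The core step is the rerouting, via an alternating-path decomposition of $A_S\cup O_S$. In this union every rider of $S$ has degree two, while a driver has degree two iff it lies in $D_2\cap D_2^*$ and degree one otherwise; hence the union is a disjoint collection of alternating paths and cycles whose path endpoints are precisely the degree-one drivers of $D_2\triangle D_2^*$. Each path thus links a reserved driver in $D_2\cap D_1^*$ (equivalently a bad rider) to a free driver in $D_2^*\cap D_1$, and distinct paths are vertex-disjoint. I would route each bad rider $r$ along its path to the free driver $\delta'$ at the far end; by the triangle inequality $d(r,\delta') \le d(r,\delta_r)$ plus the total weight of that path's $A_S$ and $O_S$ edges, where $\delta_r$ is $r$'s $M_1^*$-driver. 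Vertex-disjointness guarantees each free driver is used at most once, so the result is a genuine perfect matching.

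Summing finishes the bound: kept riders contribute their $M_1^*$ weights, and bad riders contribute at most their $M_1^*$ weights plus the weights of their paths, so the total is at most the weight of all $M_1^*$ edges plus the weight of all path edges, i.e. $OPT_1 + M(D_2,S) + M(D_2^*,S) \le OPT_1 + 2OPT_2$. The main obstacle I anticipate is the bookkeeping in the rerouting step: verifying that the alternating-path decomposition pairs each reserved driver with a \emph{distinct} free driver (so the construction is a valid matching) and that each path's edges are charged only once, keeping the extra cost within $2OPT_2$. Everything else reduces to the triangle inequality and the no-surplus identity $|D| = |R_1| + k$.
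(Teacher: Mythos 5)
Your proof is correct and follows essentially the same route as the paper's: the paper writes the single chain $M(D\setminus D_2, R_1) \leq M(D_2, R_1) + M(D_2, S) + M(D_2^*, S) \leq OPT_1 + 2OPT_2$, which is exactly your construction of keeping riders whose optimal drivers survive and rerouting the rest through $S$. The only difference is that you make explicit the alternating-path decomposition of $A_S \cup O_S$ that justifies composing the three matchings into a valid injection of $R_1$ into $D\setminus D_2$, a step the paper leaves implicit inside its ``triangle inequality for matchings.''
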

\proof{\textit{Proof.}}
We construct a matching between $D \setminus D_2$ and $R_1$ with a total weight cost at most $OPT_1 + 2 OPT_2$. Let $r_1 \in R_1$, and $\delta_1(r_1)$ be the driver matched to $r_1$ in the optimal solution. If $\delta_1(r_1) \not\in D_2$, then just match $\delta_1(r_1)$ with $r_1$. Therefore we can assume without loss of generality that all the drivers $\delta_1(r_1)$ are used in $D_2$. This means that exactly $|R_1|$ drivers of  $D\setminus D_2$ are used in second stage of the optimal solution. We can match $R_1$ with $D\setminus D_2$ and bound the cost of this matching as follows:
\begin{equation*}
    M(D\setminus D_2, R_1) \leq M(D\setminus D_2, S) + M(D_2, S) + M(D_2, R_1).
\end{equation*}
We have $ M(D\setminus D_2, S) \leq OPT_2$ because exactly $|R_1|$ drivers from $D\setminus D_2$ are used in the second stage of the optimal solution and $|R_1| = |D \setminus D_2|$. We have $M(D_2, S) \leq OPT_2$ by definition of $D_2$. Finally, $M(D_2, R_1) \leq OPT_1$ because $D_2$ includes all the drivers that were used in the first stage of the optimal matching. Therefore, we get $ ALG_1 = M(D\setminus D_2, R_1) \leq OPT_1 + 2OPT_2. $
\hfill \Halmos
\endproof
The proof of Lemma \ref{lemma73}  follows immediately by combining the above two claims.
\hfill \Halmos
\endproof

The main result for No surplus is given in the following theorem.
\begin{theorem} \label{thm:tamim}
If the surplus $\ell = |D| - |R_1| - k$ is equal to zero, there exists a polynomial time algorithm with a $\frac{7}{3}$-approximation to the TSRM problem.
\end{theorem}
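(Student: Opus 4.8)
The plan is to run the two polynomial-time procedures whose guarantees are already in hand and return whichever yields the cheaper solution. Concretely, Lemma~\ref{greedytsrm} produces a solution of total cost less than $3OPT_1 + OPT_2$, and Lemma~\ref{lemma73} (Algorithm~\ref{alg:73}, valid precisely because $\ell = 0$) produces one of cost less than $OPT_1 + 5OPT_2$. Taking the better of the two gives a solution whose cost is at most
$$\min\{3OPT_1 + OPT_2, \; OPT_1 + 5OPT_2\},$$
so it remains only to show this quantity is at most $\tfrac{7}{3}(OPT_1 + OPT_2) = \tfrac{7}{3}\,OPT$.

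First I would observe that the two bounds trade off in opposite directions: the greedy bound is cheap when $OPT_1$ is small relative to $OPT_2$, whereas Algorithm~\ref{alg:73} is cheap when $OPT_2$ is small relative to $OPT_1$. The two expressions $3OPT_1 + OPT_2$ and $OPT_1 + 5OPT_2$ coincide exactly at $OPT_1 = 2OPT_2$, where both equal $7OPT_2 = \tfrac{7}{3}(OPT_1 + OPT_2)$; this balance point is precisely what pins down the factor $\tfrac{7}{3}$. The argument is then a two-case split around it.

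If $OPT_1 \le 2OPT_2$, the greedy solution is the smaller of the two, and the inequality $3OPT_1 + OPT_2 \le \tfrac{7}{3}(OPT_1 + OPT_2)$ is equivalent to $\tfrac{2}{3}OPT_1 \le \tfrac{4}{3}OPT_2$, i.e.\ $OPT_1 \le 2OPT_2$, which holds by assumption. If instead $OPT_1 \ge 2OPT_2$, then Algorithm~\ref{alg:73} gives the smaller bound, and $OPT_1 + 5OPT_2 \le \tfrac{7}{3}(OPT_1 + OPT_2)$ rearranges to $\tfrac{8}{3}OPT_2 \le \tfrac{4}{3}OPT_1$, i.e.\ $2OPT_2 \le OPT_1$, again true by assumption. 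Hence in both cases the returned solution costs at most $\tfrac{7}{3}\,OPT$.

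I do not expect a genuine obstacle here: all the combinatorial work is carried by Lemmas~\ref{greedytsrm} and~\ref{lemma73}, and what remains is an elementary balancing calculation. The only points to check carefully are that running both subroutines stays polynomial (the greedy step is a single minimum-weight matching, and Algorithm~\ref{alg:73} invokes only a constant number of matchings) and that the factor $\tfrac{7}{3}$ is exactly the one forced by this combination, which is transparent from the fact that the two bounds meet at $OPT_1 = 2OPT_2$.
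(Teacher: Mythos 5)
Your proof is correct and follows essentially the same route as the paper: run both the greedy algorithm and Algorithm~\ref{alg:73}, take the cheaper solution, and balance the bounds $3OPT_1 + OPT_2$ and $OPT_1 + 5OPT_2$, which meet at $OPT_1 = 2OPT_2$ to give the factor $\tfrac{7}{3}$. The paper phrases the balancing as $\max_{x\in[0,1]}\min\{3x+(1-x),\,x+5(1-x)\}$ while you do an explicit two-case split, but these are the same calculation.
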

\proof{\textit{Proof.}}
We show the theorem by balancing the results of Lemma \ref{greedytsrm} and Lemma \ref{lemma73}.
Let $Greedy$ denote the total cost of the greedy algorithm and $ALG$ the one of the solution obtained by Algorithm \ref{alg:73}.  From Lemma \ref{greedytsrm} and Lemma \ref{lemma73}, we have that $Greedy \leq 3OPT_1+ OPT_2$ and $ALG \leq OPT_1 + 5OPT_2$. By taking the best of the two algorithms we get:
\begin{align*}
\min\{Greedy, \ ALG\} &= \min\{3OPT_1+ OPT_2, \ OPT_1 + 5OPT_2\} \\
&=  OPT \cdot \min \left\{ \frac{3OPT_1+ OPT_2}{OPT},  \frac{OPT_1 + 5OPT_2}{OPT} \right\}    
\end{align*} 
Since $OPT = OPT_1 + OPT_2$, we get that
$$ \min\{Greedy, \ ALG\} \leq OPT \cdot \max\limits_{x \in [0,1]} \min\big\{3x + (1-x), \ x + 5(1-x)\big\} = \frac{7}{3}\cdot OPT.$$
\hfill \Halmos
\endproof

{\color{black}

\section{IP Formulation for TSRMB with Two Scenarios}\label{appendix:ip_tsrmb}

We present below an integer programming (IP) formulation for the TSRMB problem with two scenarios, $S_1$ and $S_2$. The problem is defined as follows:
\[
\min_{D_1 \subset D} \left\{ \text{cost}_1(D_1, R_1) + \max_{S \in \{S_1, S_2\}} \text{cost}_2(D \setminus D_1, S) \right\},
\]
where $R_1$ is the set of first-stage riders, $D$ is the set of all available drivers, and $D_1$ is the decision variable denoting the subset of drivers assigned to the first stage.

We define the following variables. For each $i \in R_1$ and $j \in D$: $x_{ij} = 1$ if driver $j$ is matched to rider $i$ in the first stage, and $0$ otherwise. For each $\ell \in \{1,2\}$, $i \in S_\ell$, and $j \in D$: $y_{ij\ell} = 1$ if driver $j$ is matched to rider $i$ in scenario $S_\ell$ during the second stage, and $0$ otherwise. Finally, $z \in \mathbb{R}$ denotes the worst-case bottleneck second-stage cost (i.e., the maximum cost over both scenarios).
The resulting IP formulation is:
\begin{equation*}
\begin{array}{ll@{}ll}
\min \quad \quad & \displaystyle \frac{1}{|R_1|}\sum\limits_{i \in R_1, j \in D} d_{ij}x_{ij} + z &\\ \\
\text{s.t.} \quad 
& \displaystyle\sum\limits_{j \in D} x_{ij} = 1,  & \forall \ i \in R_1,  \\
& \displaystyle\sum\limits_{j \in D} y_{ij\ell} = 1,  & \forall \ \ell \in \{1,2\}, \ i \in S_{\ell}, \\
& \displaystyle\sum\limits_{i \in R_1} x_{ij} + \sum\limits_{i \in S_{\ell}} y_{ij\ell} \leq 1, \   & \forall \ j \in D,\ \ell \in \{1,2\},  \\
& z \geq d_{ij} \cdot y_{ij\ell},   & \forall \ \ell \in \{1,2\}, \ i \in S_{\ell},\ j \in D, \\ 
& x_{ij} \in \{0,1\}, & \forall \ i \in R_1,\ j \in D, \\ 
& y_{ij\ell} \in \{0,1\}, & \forall \ \ell \in \{1,2\},\ i \in S_{\ell},\ j \in D. \\ 
\end{array}
\end{equation*}
The first constraint ensures that each first-stage rider is matched to exactly one driver from the set $D$. The second constraint ensures that each second-stage rider, belonging to either scenario $S_1$ or $S_2$, is also matched to exactly one driver. The third constraint enforces that each driver can be used at most once, either in the first stage or in a given second-stage scenario. Note that this is an inequality, as a driver may remain unused. Additionally, because this constraint is written per scenario, a driver not used in the first stage may appear in both scenarios in the second stage (i.e., be reused across scenarios). The fourth constraint ensures that $z$ captures the worst-case bottleneck second-stage cost across all matched pairs in both scenarios. The objective function is the sum of $z$ and the average first-stage matching cost.

\section{Open Problems}\label{openproblems}

While our main focus is on the TSRMB problem, we also initiated the study of three other variants. Below we summarize several open questions, both for TSRMB and for these variants, that remain open for future work.

\vspace{2mm}
\noindent
\textbf{TSRMB.} In this problem, the first-stage cost is the average cost of a minimum-cost perfect matching, and the second-stage cost is the bottleneck cost of a perfect matching. Our results are summarized in Table~\ref{table:results}. There are several compelling open questions:
\begin{itemize}
    \item For the case of two scenarios, we give a 5-approximation and show NP-hardness. Can we close the gap and obtain a better approximation or stronger lower bound?
    \item For general $p$ scenarios, our approximation guarantee is $O(p^{1.59})$. Can we design a constant-factor approximation for large $p$?
    \item Under budgeted uncertainty with small surplus, our current approximation ratio is 17. Can we obtain a constant-factor approximation for arbitrary surplus?
\end{itemize}

\vspace{2mm}
\noindent
\textbf{TSRBB.} In this variant, both the first and second-stage costs correspond to the bottleneck cost in a perfect matching. All our algorithmic results for TSRMB extend to TSRBB, and several hardness results carry over as well. The same open questions as above apply: can the approximation factors be improved or tight lower bounds be shown?

\vspace{2mm}
\noindent
\textbf{TSSMB.} In this stochastic variant, the first-stage cost is the average cost in a minimum-cost perfect matching, and the second-stage cost is the \emph{expected} bottleneck cost across all scenarios. We show a 3-approximation for the case with no surplus and a lower bound of $5/3 - \epsilon$ for general instances. Open questions include:
\begin{itemize}
    \item Can we obtain a constant-factor approximation in the presence of surplus?
    \item Can we close the gap between the 3-approximation and the $5/3$ lower bound?
\end{itemize}

\vspace{2mm}
\noindent
\textbf{TSRM.} In this variant, both the first and second-stage costs are the total weight of a minimum-cost perfect matching, i.e., the sum of edge weights in the matching. There is a greedy 3-approximation algorithm for all uncertainty models and we show NP-hardness. Additionally, we obtain a 7/3-approximation in the special case with no surplus. Open questions include:
\begin{itemize}
    \item Can the 3-approximation be improved in the general case?  or can we strengthen the lower bound for this variant?
    \item Can the 7/3-approximation be improved for the no-surplus case, or is it tight?
\end{itemize}

}
\end{APPENDIX}
%
%






\end{document}